
\documentclass[9pt]{amsart}
\usepackage{amssymb}
\usepackage{color}
\usepackage{hyperref}



\newtheorem{theorem}{Theorem}[section]
\newtheorem{corollary}{Corollary}[section]
\newtheorem{lemma}{Lemma}[section]
\newtheorem{proposition}{Proposition}[section]

\newtheorem{definition}{Definition}[section]
\newtheorem{remark}{Remark}[section]

\theoremstyle{corollary}
\theoremstyle{lemma}
\theoremstyle{proposition}
\theoremstyle{definition}
\theoremstyle{remark}
\theoremstyle{theorem}
\numberwithin{equation}{section}

\begin{document}
\title[Multiscale Homogenization of Integral Convex
functionals in Orlicz Sobolev setting\ ]{Multiscale Homogenization of Integral Convex
	functionals in Orlicz Sobolev setting\ \ }
\author{Joel Fotso Tachago$^{\ddagger }$}
\curraddr{$^{\ddagger }$fUniversity of Bamenda, Faculty of Sciences,
Department of Mathematics and Computers Sciences, P.O. Box 39, Bambili,
Cameroon.}
\email{fotsotachago@yahoo.fr}
\author{ Giuliano Gargiulo$^{\sharp}$}
\curraddr{$\sharp$ Universit\'a degli Studi del Sannio, Dipartimento di Scienze e Tecnologie, Via De Sanctis, Benevento, 82100, Italy.}
\email{ggargiul@unisannio.it}
\author{Hubert Nnang$^{\dagger }$}
\curraddr{$^{\dagger }$University of Yaounde I, \'{E}cole Normale Sup\'{e}%
rieure de Yaound\'{e}, P.O. Box 47 Yaounde, Cameroon.}
\email{hnnang@uy1.uninet.cm, hnnang@yahoo.fr}
\author{Elvira Zappale$^{\intercal }$}
\curraddr{Dipartimento di Ingegneria Industriale, Universit\`{a} degli Studi
di Salerno, Via Giovanni Paolo II, 132, Fisciano (Sa) 84084, Italy. }
\email{ezappale@unisa.it}
\date{October 2019}

\maketitle

\begin{abstract}
The $\Gamma $-limit of a family of functionals $u\mapsto \int_{\Omega
}f\left( \frac{x}{\varepsilon },\frac{x}{\varepsilon ^{2}},D^{s}u\right) dx$
is obtained for $s=1,2$ and when the integrand $f=f\left( y,z,v\right) $ is
a continous function, periodic in $y$ and $z$ and convex with respect to $v$
with nonstandard growth. The reiterated two-scale limits of second order derivatives are
characterized in this setting.

Keywords: Convexity, homogenization,
	reiterated two-scale convergence, 
	Sobolev-Orlicz spaces.

2010 MSC: 49J45, 46J10, 46E30.
\end{abstract}

\section{Introduction\label{sec1}}

Multiscale Homogenization, as a development of Nguetseng' seminal paper \cite{ngu0} (see also \cite{All1}), have been introduced by
Allaire-Briane \cite{All2} in classical Sobolev
spaces (see also \cite{BF} among a wide literature), and later generalized in \cite{Elvira 1} to handle problems formulated in terms of higher order derivatives.
On the other hand, the notion of two scale convergence has been later extended to the Orlicz (and Orlicz-Sobolev) setting in \cite{fotso nnang 2012}, (see also \cite{kenne Nnang},\cite{FNZ1}) and reads as follows. Let $B$ an $N$-function, with conjugate $\tilde B$,  (see \cite{ada} and Section \ref{notations} below for detailed notations and definitions of functions spaces). For any bounded open set $\Omega \subset \mathbb R^N$ with  Lipschitz boudnary, a sequence of functions $(u_\varepsilon)_\varepsilon \subset L^B(\Omega)$ weakly two-scale converges in $L^B(\Omega)$ to a function $u_0\in L^B_{per}(\Omega\times Y)$, (the latter space being constituted by functions $v(x,y) \in L^B_{loc}(\Omega \times \mathbb R^N)$ such that $v(x,\cdot)$ is $Y$-periodic for a.e. $x \in \Omega$ and $\int\!\int_{\Omega \times Y}B(|v|)dx dy < +\infty$) if
\begin{align}\label{2sc}
\int_\Omega u_\varepsilon g\left(x, \frac{x}{\varepsilon}\right)dx \to \iint_{\Omega \times Y} u_0g dxdy, \hbox{ for all } g \in L^{\tilde B}(\Omega;\mathcal C_{per}(Y)),
\end{align}
as $\varepsilon \to 0$.
The sequence is said to be strongly two-scale convergent in $L^B(\Omega)$ to $u_0 \in L^B_{per}(\Omega \times Y) $, if for any $\eta >0$ and $h \in L^B(\Omega ;\mathcal C_{per}(Y)) $  such that $\|u_0-h\|_{L^B(\Omega \times Y)}< \frac{\eta}{2}$, there exists  $\rho>0$ such that 
$\|u_\varepsilon(\cdot)- h(\cdot, \cdot/{\varepsilon})\|_{L^B(\Omega)}\leq \eta$ for all $0<\varepsilon \leq \rho$.

Recently, in \cite{FNE reit}, these results have been extended to the multiscale setting, see subsection \ref{reitfirst} for precise definitions and results.
 
The aim of this work consists of extending the latter results, together with a $\Gamma -$
convergence theorem, to higher order Sobolev-Orlicz spaces under suitable assumptions on the $N$- function. 
In details we will deal with the functional 
\begin{equation}\label{Fepsilon}
F_{\varepsilon }\left( u\right) =\int_{\Omega }f\left( \frac{x}{\varepsilon }%
,\frac{x}{\varepsilon ^{2}},D^{s}u\right) dx
\end{equation} where $f$ satisfies the
following hypothesis: $\ f:
\mathbb R^N_y\times \mathbb R^N_z \times 
\mathbb R_{\ast }^{s}\rightarrow \left[ 0,+\infty \right) $ is such that:
\begin{itemize}
\item[$(H_1)$]  $f$ is continous or 

\begin{itemize}
\item[$(A_1)$] $f\left( \cdot,z,\lambda \right) $ is continous for a.e. $%
z $ and every $\lambda ,$

\item[$(A_2)$] $f\left( y,\cdot,\lambda \right) $ is measurable for
all $\left( y,\lambda \right) \in \Omega \times 
\mathbb R_{\ast}^s,$

\item[$(A_3)$] $\frac{\partial f}{\partial \lambda }\left(
y,z,\lambda \right) $ exists for every $y\in \Omega $ and for a.e. $z\in \mathbb R^N_z$, and for every $\lambda \in 
\mathbb R_\ast^s$ and it satisfies $(A_1)$ and $(A_2)$;
\end{itemize}

\item[$(H_2)$] $f$ is separately $Y-periodic$ in $y$
and $z$;

\item [$( H_3)$] $f(y,z,\cdot) $ is convex for all $y$ and
almost every $z\in 
\mathbb R^N_z$;

\item[$(H_4)$] There exist two constants $c_{1,}c_{2}>0$ such that $%
c_{1}B\left( \left\vert \lambda \right\vert \right) \leq f\left(
y,z,\lambda \right) \leq c_{2}\left( 1+B\left( \left\vert \lambda \right\vert
\right) \right) $ for all $\lambda \in 
\mathbb R_{\ast}^s$, for a.e. $z\in 
\mathbb R
_{y}^{N}$ and all $y\in \Omega,$
\end{itemize}

\noindent where $Y$ 
is a copy of the unit cube $(-1/2,1/2)^N$, and  $\Omega$ is a bounded open subset of $\mathbb R^N$, $s \in \{1,2\}$, $B$ is an $\it N$- function satisfying, together with its conjugate function, $\triangle_2$ condition (see \cite{ada} and Section \ref{notations} below). 

Moreover $\mathbb R^s_\ast$ coincides with
$\mathbb R
^{d\times N}$ if $s=1$ and with
$\left( Sym\left( \mathbb{R}^{N},\mathbb{R}
^{N}\right) \right) ^{d}$, where $Sym\left( 
\mathbb R^N,\mathbb R
^{N}\right) $ denotes the space of all linear symetric transformations from $\mathbb R^N$ to $\mathbb R^N$.

Bearing in mind that $N, m, d \in \mathbb N$, $\varepsilon$ denotes a sequence of positive real numbers converging to $0$, and denoting (as above) by $Y$ and $Z$ two identical copies of the cube $(-1/2,1/2)^N$, adopting the notations in subsection \ref{SOsp}, our first main result deals with the reiteratively two-scale convergence in second order Sobolev-Orlicz spaces. Indeed we have the following result:

\begin{theorem}\label{1}
Let $\Omega$ be a bounded open set of $\mathbb R^N$, with Lipschitz boundary. If $(u_{\varepsilon})_\varepsilon$ is a bounded
sequence in $W^2L^B\left( \Omega ;
\mathbb R
^{d}\right),$ then there exist a subsequence (not relabelled) converging
weakly in $W^2L^B\left( \Omega ;%
\mathbb{R}
^{d}\right) $ to a function $u,$ and functions $U\in L^{1}\left( \Omega
;W^2L^B(Y;
\mathbb R^d) \right) $ and $W\in L^{1}\left( \Omega \times Y;W^2L^B\left(Z;%
\mathbb{R}
^{d}\right) \right) $ such that
\begin{itemize}
\item[(i)] $U(x,y)-A(x)y\in L^{1}(
\Omega ;W^2L^B_{per}(Y;
\mathbb R^d)) $ for some $A\in L^{1}(\Omega ;
\mathbb R
^{d\times N}) ;$
\item[(ii)] $W(x,y,z)-C(x,y)z\in L^{1}(\Omega \times Y;W^2L_{per}^B(Z;\mathbb R^d)) $ for some

\noindent $C\in L^{1}\left(\Omega \times Y;
\mathbb R
^{d\times N}\right);$
\item[(iii)] $u_{\varepsilon }\overset{reit-2s}{\rightharpoonup }%
u,Du_{\varepsilon }\overset{reit-2s}{\rightharpoonup }Du,$ and \item[(iv)]$\frac{\partial
^{2}u_{\varepsilon }}{\partial x_{i}\partial x_{j}}\overset{reit-2s}{\rightharpoonup }\frac{\partial ^{2}u}{\partial x_{i}\partial x_{j}}+\frac{%
\partial ^{2}U}{\partial y_{i}\partial y_{j}}+\frac{\partial ^{2}W}{\partial
z_{i}\partial z_{j}}$ for each $i,j \in \mathbb N$.
\end{itemize}
Conversely, given $u\in W^2L^B(\Omega;
\mathbb R^d)$, $U\in L^{1}\left(\Omega; W^2L^B\left(Y;
\mathbb R
^d\right) \right),$  $W\in L^{1}(\Omega \times Y;W^2L^B(Z;
\mathbb R^d)) $ satisfying $(i), (ii),$ there
exists a bounded sequence $(u_{\varepsilon })_\varepsilon  \subset $ $
W^2L^B\left( \Omega ;
\mathbb R^d\right) $ for which $(iii)$ and $(iv)$ hold.
\end{theorem}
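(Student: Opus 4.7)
My plan is to adapt the higher-order two-scale argument of \cite{Elvira 1} to the reiterated Orlicz-Sobolev framework developed in \cite{FNE reit}. Since $B$ and $\tilde B$ both satisfy the $\Delta_2$-condition, $W^2L^B(\Omega;\mathbb R^d)$ is reflexive, so I extract (without relabelling) a subsequence with $u_\varepsilon \rightharpoonup u$ in $W^2L^B$. A Rellich-type compact embedding $W^2L^B\hookrightarrow W^1L^B$, available under $\Delta_2$, upgrades this to $u_\varepsilon \to u$ and $Du_\varepsilon \to Du$ strongly in $L^B$, and strong $L^B$-convergence implies weak reiterated two-scale convergence to the same limits viewed as independent of $(y,z)$, yielding (iii).

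For (iv), I apply the first-order reiterated gradient characterisation of \cite{FNE reit} to the scalar sequence $\partial_j u_\varepsilon^\alpha$ (bounded in $W^1L^B$ and strongly $L^B$-convergent to $\partial_j u^\alpha$). This produces scalar correctors $\phi^{j,\alpha}\in L^B(\Omega;W^1L^B_{per}(Y))$ and $\psi^{j,\alpha}\in L^B(\Omega\times Y;W^1L^B_{per}(Z))$ with
\[
\partial_i\partial_j u_\varepsilon^\alpha \overset{reit-2s}{\rightharpoonup} \partial_i\partial_j u^\alpha+\partial_{y_i}\phi^{j,\alpha}+\partial_{z_i}\psi^{j,\alpha}.
\]
The symmetry $\partial_i\partial_j u_\varepsilon^\alpha=\partial_j\partial_i u_\varepsilon^\alpha$ forces symmetry of the limit in $(i,j)$; testing first against $z$-independent test functions (so that $\partial_{z_i}\psi^{j,\alpha}$ integrates to zero on $Z$ by periodicity) and then against functions with zero $Z$-average (killing the $\partial_{y_i}\phi^{j,\alpha}$ term) yields $\partial_{y_i}\phi^{j,\alpha}=\partial_{y_j}\phi^{i,\alpha}$ and $\partial_{z_i}\psi^{j,\alpha}=\partial_{z_j}\psi^{i,\alpha}$. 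A Poincaré lemma on the cube then provides scalar potentials $U^\alpha(x,y)$ and $W^\alpha(x,y,z)$ with $\phi^{j,\alpha}=\partial_{y_j}U^\alpha$ and $\psi^{j,\alpha}=\partial_{z_j}W^\alpha$. Since $\phi$ and $\psi$ are themselves $Y$- and $Z$-periodic, $U^\alpha$ and $W^\alpha$ are periodic modulo an affine function in the cell variable, which is precisely the structure $U-Ay$ and $W-Cz$ of (i) and (ii); the affine parts need not lie in $L^B(\Omega)$, explaining the drop of the external integrability to $L^1$.

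For the converse, starting from mollifications $U^\eta$ and $W^\eta$ of $U$ and $W$, I construct
\[
u_\varepsilon^\eta(x):=u(x)+\varepsilon^2 U^\eta\!\bigl(x,\tfrac{x}{\varepsilon}\bigr)+\varepsilon^4 W^\eta\!\bigl(x,\tfrac{x}{\varepsilon},\tfrac{x}{\varepsilon^2}\bigr),
\]
check by direct differentiation that it is bounded in $W^2L^B$ (after the $\varepsilon^2,\varepsilon^4$ scalings the affine pieces $A(x)y$ and $C(x,y)z$ contribute only harmless lower-order corrections) and that $\partial_i\partial_j u_\varepsilon^\eta$ reiteratively two-scale converges to $\partial_i\partial_j u+\partial_{y_i}\partial_{y_j}U^\eta+\partial_{z_i}\partial_{z_j}W^\eta$; a diagonal passage $\eta\to 0$ then closes the argument. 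The main obstacle is the identification step: setting up the curl-freeness argument and the Poincaré lemma on periodic cells in the Orlicz-Sobolev setting, where the absence of $L^p$-duality forces reliance on modular $\Delta_2$-estimates, and the careful bookkeeping of the affine ambiguities that downgrade the integrability of $U$ and $W$ in $x$ from $L^B$ to $L^1$.
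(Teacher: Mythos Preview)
Your approach is correct but genuinely different from the paper's. The paper does \emph{not} run a direct curl-freeness/Poincar\'e-lemma argument in the Orlicz setting. Instead it exploits that the $\Delta_2$-condition on $B$ and $\widetilde B$ yields embeddings $L^q\hookrightarrow L^B\hookrightarrow L^p$ for some $1<p\le q$ (their hypothesis $(H)$), so that a bounded sequence in $W^2L^B$ is a fortiori bounded in $W^{2,p}$; it then invokes the already-proved Sobolev result \cite[Theorem~1.10]{Elvira 1} to obtain $U,W,A,C$ with the required structure in the $L^p$ setting, and finally bootstraps the regularity of $U$ and $W$ back from $W^{2,p}$ to $W^2L^B$ by matching the $L^p$ limits with the Orlicz first-order correctors $p_0^1,p_0^2$ coming from Proposition~\ref{compactnessW1LB} applied to $\partial_j u_\varepsilon$ (uniqueness of distributional limits, since $L^{p'}\subset L^{\widetilde B}$). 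This is shorter because it outsources the identification of the potentials entirely to the classical $W^{2,p}$ theory. Your route---working intrinsically in $L^B$, establishing the mixed-partial symmetry of the first-order correctors by splitting test functions according to $z$-dependence, and then producing $U,W$ via a periodic Poincar\'e lemma---is more self-contained and avoids any auxiliary $L^p$ detour, at the cost of having to justify the Poincar\'e lemma on the cell in Orlicz--Sobolev regularity. One small point: since the correctors $\phi^{j,\alpha}$ furnished by Proposition~\ref{compactnessW1LB} lie in $W^1_\# L^B(Y)$ (zero $Y$-mean), the affine defect $A(x)y$ in your construction actually vanishes, so your explanation for the drop to $L^1$ in $x$ is not quite the mechanism; the $L^1$ in the statement is simply weaker than what either argument delivers (the paper in fact notes one gets $L^B$, cf.\ Remark~\ref{Cianchi}).
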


The other main result deals with the $\Gamma -$ convergence of the family $(F_\varepsilon)_\varepsilon$ in \eqref{Fepsilon}, thus extending, from one hand, Theorem 1.1. in \cite{FNZ1} and, from the other, generalizing to the Orlicz-Sobolev setting \cite[Theorem 1.8]{Elvira 1}:

\begin{theorem}\label{2}
Let $\Omega$ be a bounded open set of $\mathbb R^N$, with Lipschitz boundary. Let $f:
\mathbb R^N_y\times
\mathbb R_z
^N\times 
\mathbb R_{\ast }^{s}\rightarrow \left[ 0,+\infty \right) $ satisfying asumption $(H_1)-(H_4) $ then $\Gamma \left( L^{B}\left( \Omega \right)
\right) -\underset{\varepsilon \rightarrow 0}{\lim }\int_{\Omega }f\left( 
\frac{x}{\varepsilon },\frac{x}{\varepsilon ^{2}},D^{s}u\right)
dx=\int_{\Omega }\overline{f_{\hom }^{s}}\left( D^{s}u\right)dx$ for every $%
u\in W^sL^B\left( \Omega ;%
\mathbb{R}
^{d}\right),$ where 
$s=1,2$ and  
\begin{equation}\label{fshombar}
\overline{f_{\hom }^{s}}\left( \xi \right) :=\inf \left\{ \int_Yf_{\hom
}^{s}\left( y,\xi + D^{s}\varphi \left( y\right) \right) dy:\varphi \in
W^sL^B_{per}\left( Y;
\mathbb R^d\right) \right\},
\end{equation}%
and 
\begin{equation}\label{fshom}
f_{\hom }^{s}\left( y,\xi\right) :=\inf \left\{ \int_Z f\left( y,z,\xi
+D^{s}\psi \left( z\right) \right) dz:\psi \in W^sL^B_{per}\left( Z;
\mathbb R^d\right) \right\},
\end{equation}%
for $s=1,2$.
\end{theorem}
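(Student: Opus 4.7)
The plan is to prove the two halves of the $\Gamma$-convergence separately: the liminf via the reiterated two-scale compactness of Theorem \ref{1} combined with the convexity of $f$, and the limsup via an explicit three-term corrector expansion. The scheme mirrors that of \cite{FNZ1} (case $s=1$, single scale, Orlicz) and \cite{Elvira 1} (case $s=2$, reiterated, $L^p$), but systematically adapted to the reiterated Orlicz--Sobolev setting by exploiting the $\Delta_2$ condition on $B$ and $\tilde B$ to pass freely between modular and norm convergence.

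For the $\Gamma$-liminf, take $u_\varepsilon \to u$ in $L^B(\Omega)$; we may assume $\liminf_\varepsilon F_\varepsilon(u_\varepsilon) < +\infty$. The growth bound $(H_4)$ together with $\Delta_2$ yields boundedness of $(u_\varepsilon)_\varepsilon$ in $W^s L^B(\Omega;\mathbb R^d)$, so Theorem \ref{1} (and its $s=1$ analogue) provides, along a subsequence, correctors $U$ and $W$ satisfying $(i)$--$(ii)$ such that $D^s u_\varepsilon \overset{reit-2s}{\rightharpoonup} D^s u + D^s_y U + D^s_z W$. An Ioffe-type lower semicontinuity result for convex integrands under two-scale convergence in the Orlicz framework (of the type used in \cite{FNZ1}) then gives, via $(H_3)$ and the continuity of $f$,
\begin{equation*}
\liminf_{\varepsilon \to 0} F_\varepsilon(u_\varepsilon) \geq \int_\Omega \int_Y \int_Z f\bigl(y, z, D^s u(x) + D^s_y U(x,y) + D^s_z W(x,y,z)\bigr)\, dz\, dy\, dx.
\end{equation*}
Minimising first over admissible $W$ reproduces $f^s_{\hom}(y, D^s u(x) + D^s_y U(x,y))$ from \eqref{fshom}, and a subsequent infimum over admissible $U$ reproduces $\overline{f^s_{\hom}}(D^s u(x))$ from \eqref{fshombar}, yielding the lower bound.

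For the $\Gamma$-limsup, by density of smooth maps in $W^s L^B(\Omega;\mathbb R^d)$ (a consequence of $\Delta_2$) and continuity of the candidate limit functional, it suffices to treat $u$ smooth. Choose, for each $x$, a near-minimiser $\varphi(x,\cdot) \in W^s L^B_{per}(Y;\mathbb R^d)$ in \eqref{fshombar} at $\xi = D^s u(x)$, and, for a.e.\ $(x,y)$, a near-minimiser $\psi(x,y,\cdot) \in W^s L^B_{per}(Z;\mathbb R^d)$ in \eqref{fshom} at $\xi = D^s u(x) + D^s\varphi(x,y)$. Set
\begin{equation*}
u_\varepsilon(x) := u(x) + \varepsilon^s \varphi\!\left(x,\tfrac{x}{\varepsilon}\right) + \varepsilon^{2s}\, \psi\!\left(x,\tfrac{x}{\varepsilon},\tfrac{x}{\varepsilon^2}\right),
\end{equation*}
so that $D^s u_\varepsilon = D^s u + (D^s_y\varphi)(x,\cdot/\varepsilon) + (D^s_z\psi)(x,\cdot/\varepsilon,\cdot/\varepsilon^2) + R_\varepsilon$ with remainder $R_\varepsilon$ composed of lower-order terms carrying positive powers of $\varepsilon$. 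The $\Delta_2$ condition makes $R_\varepsilon$ modularly negligible in $L^B$. A Riemann--Lebesgue argument for iterated periodic oscillations in Orlicz spaces, in the spirit of \cite{fotso nnang 2012, FNE reit}, then gives $\lim_\varepsilon F_\varepsilon(u_\varepsilon) = \int_\Omega \overline{f^s_{\hom}}(D^s u)\, dx$, and a final diagonal extraction removes the auxiliary reductions (smoothness of $u$, near-optimality, piecewise-constant approximants in the cell arguments).

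The hardest step is this $\Gamma$-limsup construction: ensuring joint measurability of the near-minimisers $\varphi, \psi$ in the slow variables, and then controlling both the remainder $R_\varepsilon$ and the discrepancy between the running arguments of $f$ and those sampled in the cell problems. In the $L^p$ setting H\"older inequalities handle this routinely; in the present Orlicz framework the $\Delta_2$ hypothesis on both $B$ and $\tilde B$ is essential in order to convert modular smallness into norm smallness, and one reduces to a finite number of cell problems by piecewise-constant approximation of $D^s u$, exploiting the continuity and convexity of $f^s_{\hom}$ and $\overline{f^s_{\hom}}$ (which must be established as preliminary lemmas from $(H_1)$--$(H_4)$).
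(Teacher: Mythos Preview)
Your liminf argument coincides with the paper's (Lemmas \ref{Lemma 3.4}/\ref{Lemma 3.3}, \ref{lbstep1}, \ref{lbfhombar}): reiterated two-scale compactness from Theorem \ref{1}, convex lower semicontinuity of the unfolded integral, then successive infimization over the inner and outer correctors to reach $\overline{f^s_{\hom}}$.

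For the limsup you take a genuinely different route from the paper. You propose a \emph{direct} global recovery sequence $u_\varepsilon = u + \varepsilon^s\varphi(x,\cdot/\varepsilon) + \varepsilon^{2s}\psi(x,\cdot/\varepsilon,\cdot/\varepsilon^2)$ with correctors depending on the slow variable $x$, handling the $x$-dependence by density and piecewise-constant approximation of $D^s u$. The paper instead follows the \emph{localization/blow-up} strategy of \cite{Elvira 1}: it first extracts a subsequence along which the $\Gamma$-limit exists (Proposition \ref{Gammaextract}), then proves via a Fonseca--Mal\'y nested-subadditivity argument (Lemma \ref{lemma3.3}) that $\mathcal F_{\{\varepsilon_k\}}(u,\cdot)$ is the trace of a Radon measure absolutely continuous with respect to $\mathcal L^N$, and finally bounds its Radon--Nikod\'ym density at each Lebesgue point $x_0$ by constructing a recovery sequence on $Q(x_0,\delta)$ with a \emph{single fixed} $\varphi\in\mathcal C^\infty_{per}(Y;\mathbb R^d)$ (independent of $x$), invoking Aumann's measurable selection (Proposition \ref{Aumannselection}) only for the inner corrector $y\mapsto\psi(y,\cdot)$. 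Your route avoids the Radon-measure and subsequence-extraction machinery, at the price of needing measurable selection in more variables and of controlling the cross-derivatives that arise from the $x$-dependence of $\varphi$; the paper's route trades this for the technical Orlicz adaptation of the nested-subadditivity estimates. Both approaches are sound, and the Orlicz-specific ingredients you identify (continuity of $f^s_{\hom}$, cf.\ Lemma \ref{fhomcontinuous}, and the replacement of H\"older bounds by Young-type inequalities via $b$, $\widetilde B$ and $\Delta_2$) are indeed the ones the paper relies on as well.
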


When $s=1$ we will denote $f^s_{hom}$ simply by $f_{hom}$.

We emphasize that the above results could be recasted in the framework of Periodic Unfolding, introduced in \cite{CDG}, (see also \cite{CDbook} for a systematic treatment) or applied to the non convex case, in the spirit of \cite{CDDA}, and these are indeed the subjects of our future investigation.

In the next section we establish notation and recall some preliminary results, mainly adopting the symbols already used in \cite{fotso nnang 2012}, and \cite
{FNE reit},
while Section \ref{mainproof}  is
devoted to establish Theorems \ref{1} and \ref{2}.

\section{Preliminaries}\label{notations}

In the sequel, in order to enlighten the space variable under consideration we will adopt the notation $\mathbb R^N_x, \mathbb R^N_y$, or $\mathbb R^N_z$ to indicate where $x,y $ or $z$ belong to. On the other hand, when it will be clear from the context, we will simply write $\mathbb R^N$.

The family of open subsets in $\mathbb R^N$ will be denoted by $\mathcal A(\mathbb R^N)$, while the family of Borel sets is denoted by $\mathcal B(\mathbb R^N)$.

For any subset $D$ of $\mathbb R^m$, $m \in \mathbb N$, by $\overline D$, we denote its closure in the relative topology.
Given an open set $A$ by $\mathcal C_b(A)$ we denote the space of real valued continuous and bounded functions defined in $A$.

For every $x \in \mathbb R^N$ we denote by $[x]$ its integer part, namely the vector in $\mathbb Z^N$, which has as components the integer parts of the components of $x$.

By $\mathcal L^N$ we denote the Lebesgue measure in $\mathbb R^N$.

Now we recall results of Orlicz-Sobolev spaces that will be used in the remainder of the paper.

\subsection{Orlicz-Sobolev spaces}\label{SOsp}

\bigskip Let $B:\left[ 0,+\infty \right[ \rightarrow \left[ 0,+\infty \right[
$ be an ${\rm N}-$function \cite{ada}, i.e., $B$ is continuous, convex, with $%
B\left( t\right) >0$ for $t>0,\frac{B\left( t\right) }{t}\rightarrow 0$ as $t\rightarrow 0,$ and $\frac{B\left( t\right) }{t}\rightarrow \infty $ as $%
t\rightarrow \infty .$
Equivalently, $B$ is of the form $B\left( t\right)
=\int_{0}^{t}b\left( \tau \right) d\tau ,$ where $b:\left[ 0,+\infty \right[
\rightarrow \left[ 0,+\infty \right[ $ is non decreasing, right continuous,
with $b\left( 0\right) =0,b\left( t\right) >0$ if $t>0$ and $b\left(
t\right) \rightarrow +\infty $ if $t\rightarrow +\infty .$ 

\noindent We denote by $\widetilde{B},$ the complementary ${\it N}-$function of $B$ defined by 
$$\widetilde{B}(t)=\sup_{s\geq 0}\left\{ st-B\left( s\right) ,t\geq 0\right\}. $$  
It follows
that 
\begin{equation}\nonumber 
\frac{tb(t)}{B(t)} \geq 1
\;\;(\hbox{or }> \hbox{if }b\hbox{ is strictly increasing}),
\end{equation}
\begin{equation}
\nonumber \widetilde{B}( b(t) )\leq
tb( t) \leq B( 2t) \hbox{ for all }t>0.
\end{equation}
An ${\rm N}-$function $B$ is of class $\triangle _{2}$ (denoted $B\in \triangle
_{2}$) if there are $\alpha >0$ and $t_{0}\geq 0$ such that $B\left(
2t\right) \leq \alpha B\left( t\right) $ for all $t\geq t_{0}$. 

\noindent In all what
follows $B$ and $\widetilde{B}$ are conjugates ${\it N}-$function$s$
both satisfying the $\triangle_2$ condition and $c$ refers to a generic constant that may vary from line to line. Let $\Omega $ be a
bounded open set in $\mathbb R^N.$ The Orlicz-space
$$L^{B}\left(
\Omega \right) =\left\{ u:\Omega \rightarrow 
\mathbb C \hbox{ measurable, }\lim_{\delta \to 0^+} \int_{\Omega
}B\left( \delta \left\vert u\left( x\right) \right\vert \right) dx=0\right\} 
$$ 
is a Banach space for the Luxemburg norm: $$\left\Vert u\right\Vert
_{B,\Omega }=\inf \left\{ k>0:\int_{\Omega }B\left( \frac{\left\vert u\left(
	x\right) \right\vert }{k}\right) dx\leq 1\right\} <+\infty .$$It follows
that: $\mathcal{D}\left( \Omega \right) $ is dense in $L^{B}\left( \Omega
\right), L^{B}\left( \Omega \right) $ is separable and reflexive, the dual
of $L^{B}\left( \Omega \right) $ is identified with $L^{\widetilde{B}}\left(
\Omega \right),$ and the norm on $L^{\widetilde{B}}\left( \Omega \right) $
is equivalent to $\left\Vert \cdot\right\Vert _{\widetilde{B},\Omega }.$
Futhermore, it is also convenient to recall that:
\begin{itemize} 
	\item[(i)] $
	\left\vert \int_{\Omega }u\left( x\right) v\left( x\right) dx\right\vert
	\leq 2\left\Vert u\right\Vert _{B,\Omega }\left\Vert v\right\Vert _{%
		\widetilde{B},\Omega }$ for $u\in L^{B}\left( \Omega \right) $ and $v\in L^{%
		\widetilde{B}}\left( \Omega \right) $, 
	\item[(ii)] given $v\in L^{%
		\widetilde{B}}\left( \Omega \right) $ the linear functional $L_{v}$ on $%
	L^{B}\left( \Omega \right) $ defined by $L_{v}\left( u\right) =\int_{\Omega
	}u\left( x\right) v\left( x\right) dx,\left( u\in L^{B}\left( \Omega \right)
	\right) $ belongs to the dual $\left[ L^{B}\left( \Omega \right) \right]
	^{\prime }=L^{\widetilde{B}}\left( \Omega \right) $ with $\left\Vert
	v\right\Vert _{\widetilde{B},\Omega }\leq \left\Vert L_{v}\right\Vert _{\left[ L^{B}\left( \Omega \right) \right] ^{\prime }}\leq 2\left\Vert
	v\right\Vert _{\widetilde{B},\Omega }$, 
	\item[(iii)]  the property $\lim_{t \to +\infty} \frac{B\left( t\right) }{t}=+\infty $
	implies $L^{B}\left( \Omega \right) \subset L^{1}\left( \Omega \right)
	\subset L_{loc}^{1}\left( \Omega \right) \subset \mathcal{D}^{\prime }\left(
	\Omega \right),$ each embedding being continuous.
\end{itemize}

For the sake of notations, given any $d\in \mathbb N$, when $u:\Omega \to \mathbb R^d$, such that each component $(u^i)$, of $u$, lies in $L^B(\Omega)$  
we will denote the norm of $u$ with the symbol $\|u\|_{L^B(\Omega)^{d}}:=\sum_{i=1}^d \|u^i\|_{B,\Omega}$.

\noindent 
Let $s=1$ or $2$, following \cite{Ad1} one can introduce the Orlicz-Sobolev space $W^sL^B(\Omega;\mathbb R^d)$, consisting of those
(equivalence classes of) functions $u \in L^B(\Omega)$) for which
$D^s u\in L^B(\Omega;\mathbb R^s_*)$, and the derivatives are taken in the distributional sense on $%
\Omega$. For $s=1$, $W^1L^B(\Omega)$ is a reflexive Banach space with
respect to the norm $\left\Vert u\right\Vert _{W^{s}L^{B}\left(
	\Omega \right) }=\left\Vert u\right\Vert _{B,\Omega }+\sum_{i=1}^{d}$ $%
	\left\Vert \frac{\partial u}{\partial x_{i}}\right\Vert _{B,\Omega }$.  The same holds for
	$W^{2}L^{B}\left( \Omega \right)$, endowing it with the norm $\left\Vert u\right\Vert _{W^{s}L^{B}\left(
		\Omega \right) }=\left\Vert u\right\Vert _{B,\Omega }+\sum_{i=1}^{d}$ $%
	\left\Vert \frac{\partial u}{\partial x_{i}}\right\Vert _{B,\Omega }+ \sum_{i,j=1}^d\left\|\frac{\partial^2 u}{\partial x_i \partial x_j}\right\|_{B, \Omega}$. It is immediately seen the extension to vector fields $W^sL^B(\Omega;\mathbb R^d)$.
	
We denote by $W_{0}^{s}L^{B}\left( \Omega \right)
, $ the closure of $\ \mathcal{D}\left( \Omega \right) $\ in $%
W^{s}L^{B}\left( \Omega \right) $ and the semi-norm $u\rightarrow \left\Vert
u\right\Vert _{W_{0}^{s}L^{B}\left( \Omega \right) }=\left\Vert
D^su\right\Vert _{B,\Omega }$ 
is a norm on $W_{0}^{s}L^{B}\left(
\Omega \right) $ equivalent to $\left\Vert \cdot \right\Vert _{W^{s}L^{B}\left(
	\Omega \right) }.$ Arguing in components, the same definitions hold for $W^{s}L^B(\Omega;\mathbb R^d)$ and $W^{s}_0L^B(\Omega;\mathbb R^d)$.
By $W_{\#}^{s}L^{B}\left( Y\right)$, we denote the space of functions $u \in W^sL^B(Y)$ such that $\int_Y u(y)d y=0$.  
Given a function space $S$ defined in $Y$, $Z$ or $Y\times Z$, the subscript $S_{per}$ means that the functions are periodic in $Y$, $Z$ or $Y\times Z$, as it will be clear from the context. In particular by $\mathcal C_{per}(Y), \mathcal C_{per}(Z)$ (or $\mathcal C_{per}(Y\times Z)$ respectively), we denote the space of continuous functions in $\mathbb R^d$, which are $Y$ or $Z$-periodic (continuous function in $\mathbb R^d \times \mathbb R^d$, which are $Y\times Z$- periodic, respectively).

\subsection{Multiscale Convergence in Orlicz spaces}\label{reitfirst}

\subsubsection{Reiterated two scale convergence in first order Sobolev-Orlicz spaces}

In the sequel we present a generalization of definitions in \cite{fotso nnang 2012, nnang reit,
	Nnang These} obtained in \cite{FNE reit}. To this end, we recall that, within this section, $\Omega$ is a bounded open set with Lipschitz boundary, and we denote by $L^B_{per}(\Omega \times Y\times Z)$ the space of functions in $v \in L^B_{loc}(\Omega \times \mathbb R^N \times \mathbb R^N)$ which are periodic in $Y \times Z$ and such that $\int\!\int\!\int_{\Omega \times Y \times Z} B(|v|)dxdydz <+\infty$. 
For any given $\varepsilon>0$ and any function $v \in L^B(\Omega;\mathcal C_{per}(Y\times Z))$, we define 
\begin{align}\label{ve}
v^\varepsilon(x):= v\left(x, \frac{x}{\varepsilon}, \frac{x}{\varepsilon^2}\right).
\end{align}
This function is well defined as proven in \cite[Subsection 2.2]{FNE reit} and we recall \cite[Definition 2.1]{FNE reit}.
\begin{definition}\label{def3s}
	A sequence of functions $\left( u_{\varepsilon }\right) _{\varepsilon}  \subseteq L^{B}\left( \Omega \right) $ is said to be:
	\begin{itemize}
		\item[-]weakly reiteratively two-scale convergent in $L^{B}\left( \Omega \right) $
		to $u_{0}\in L_{per}^{B}\left( \Omega \times Y\times
		Z\right) $ if 
		\begin{equation}
		\label{2b}
		\int_{\Omega }u_{\varepsilon }f^{\varepsilon }dx\rightarrow 
		\iiint_{\Omega \times Y\times Z}u_{0}fdxdydz, \hbox{ for all } f\in L^{%
			\widetilde{B}}\left( \Omega ;\mathcal{C}_{per}\left( Y\times Z\right) \right),
		\end{equation}%
		as $\varepsilon \to 0$,
		\item[-]strongly reiteratively two-scale convergent in $L^{B}\left( \Omega \right) $\
		to $u_{0}\in L_{per}^{B}\left( \Omega \times Y\times Z\right) $\ if for $%
		\eta >0$ and $f\in L^{B}\left( \Omega ;\mathcal{C}_{per}\left( Y\times
		Z\right) \right) $ verifying $\left\Vert u_{0}-f\right\Vert _{B,\Omega
			\times Y\times Z}\leq \frac{\eta }{2}$ there exists $\rho >0$ such that $%
		\left\Vert u_{\varepsilon }-f^{\varepsilon }\right\Vert _{B,\Omega }\leq
		\eta $ for all $0<\varepsilon \leq \rho.$
	\end{itemize}
\end{definition}

When (\ref{2b}) happens 
we denote it by "$%
u_{\varepsilon }\rightharpoonup u_{0}$ in $L^{B}\left( \Omega
\right)-$\ weakly reiteratively two-scale " 
and we will say that 
$u_{0}$ is the weak reiterated two-scale limit in $L^{B}\left( \Omega
\right) $ of the sequence $\left( u_{\varepsilon }\right) _{\varepsilon}.$

	The above definition extends in a canonical way, arguing in components, to vector valued functions.
	
	Moreover for the sake of exposition, the reiterated weak convergence of $u_\varepsilon$ towards $u_0$ in $L^B$ will be also denoted by the symbol
	$$
	u_\varepsilon \overset{reit-2s}{\rightharpoonup} u_0,
	$$
	both in the scalar and in the vector valued setting.

The proof of the following lemma can be found in \cite[Proof of Lemma 2.3]{FNE reit}.

\begin{lemma}
If $u\in L^{B}\left( \Omega ;\mathcal{C}_{per}\left( Y\times Z\right)
\right) $ and $\varepsilon >0$, then, considered $u^\varepsilon$ as in \eqref{ve}, it results that $u^{\varepsilon }\overset{reit-2s}{\rightharpoonup} u$ in $L^{B}\left(
\Omega \right)$, and we have $\underset{\varepsilon
\rightarrow 0}{\lim }\left\Vert u^{\varepsilon }\right\Vert _{B,\Omega
}=\left\Vert u\right\Vert _{B,\Omega \times Y\times Z}$.
\end{lemma}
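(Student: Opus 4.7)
The plan is to reduce both assertions to a single multiscale mean-value identity and then to exploit the modular characterization of the Luxemburg norm. The key workhorse, which I would either quote from subsection~2.2 of \cite{FNE reit} or reprove by density, is that for every $g \in L^{1}(\Omega;\mathcal{C}_{per}(Y\times Z))$,
$$
\int_{\Omega} g\Big(x,\tfrac{x}{\varepsilon},\tfrac{x}{\varepsilon^{2}}\Big)\,dx \;\longrightarrow\; \iiint_{\Omega\times Y\times Z} g(x,y,z)\,dx\,dy\,dz \quad\text{as}\quad \varepsilon \to 0.
$$
The proof is the standard one: the identity is first checked on separated products $\varphi(x)\eta(y,z)$, with $\varphi \in \mathcal{C}_{c}(\Omega)$ and $\eta \in \mathcal{C}_{per}(Y\times Z)$, by the classical Nguetseng--Allaire reiterated periodic argument (which separates the two scales $x/\varepsilon$ and $x/\varepsilon^{2}$), and then extended by density to the whole of $L^{1}(\Omega;\mathcal{C}_{per}(Y\times Z))$ using the uniform bound $\big|\int_{\Omega} g^{\varepsilon}\big|\leq \int_{\Omega}\|g(x,\cdot,\cdot)\|_{\infty}\,dx$.

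Granted this identity, the weak reiterated two-scale convergence $u^{\varepsilon}\overset{reit-2s}{\rightharpoonup} u$ follows at once. Indeed, for an arbitrary admissible test function $f \in L^{\widetilde{B}}(\Omega;\mathcal{C}_{per}(Y\times Z))$, the Orlicz H\"older inequality gives $u f \in L^{1}(\Omega;\mathcal{C}_{per}(Y\times Z))$, and since $(uf)^{\varepsilon}(x) = u^{\varepsilon}(x)\,f^{\varepsilon}(x)$, applying the mean-value identity to $g := uf$ produces
$$
\int_{\Omega} u^{\varepsilon}(x)\,f^{\varepsilon}(x)\,dx \;\longrightarrow\; \iiint_{\Omega\times Y\times Z} u(x,y,z)\,f(x,y,z)\,dx\,dy\,dz,
$$
which is precisely the condition \eqref{2b}.

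For the convergence of the norms I would fix $k>0$ and apply the very same mean-value identity to the nonnegative function $(x,y,z)\mapsto B(|u(x,y,z)|/k)$. This function lies in $L^{1}(\Omega;\mathcal{C}_{per}(Y\times Z))$: the continuity of $B$ and of $u(x,\cdot,\cdot)$ makes it continuous in $(y,z)$ for a.e.\ $x$, while the $\triangle_{2}$ hypothesis together with $x\mapsto \|u(x,\cdot,\cdot)\|_{\infty} \in L^{B}(\Omega)$ forces $x\mapsto B(\|u(x,\cdot,\cdot)\|_{\infty}/k)$ to be integrable for every $k>0$. Consequently
$$
\int_{\Omega} B\!\left(\tfrac{|u^{\varepsilon}(x)|}{k}\right)dx \;\longrightarrow\; \iiint_{\Omega\times Y\times Z} B\!\left(\tfrac{|u(x,y,z)|}{k}\right)dx\,dy\,dz
$$
for every fixed $k>0$.

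To conclude, I set $K := \|u\|_{B,\Omega\times Y\times Z}$ and exploit the fact that the modular $\Phi(k):=\iiint B(|u|/k)\,dx\,dy\,dz$ is continuous and strictly decreasing on $(0,+\infty)$ (using $\triangle_{2}$), with $\Phi(k)<1$ for $k>K$ and $\Phi(k)>1$ for $k<K$. The pointwise-in-$k$ convergence of the modulars then forces $\|u^{\varepsilon}\|_{B,\Omega}\leq k$ eventually for every $k>K$ and $\|u^{\varepsilon}\|_{B,\Omega}\geq k$ eventually for every $k<K$; letting $k\to K^{\pm}$ gives $\lim_{\varepsilon\to 0}\|u^{\varepsilon}\|_{B,\Omega}=K$, as required. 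I expect the main technical obstacle to be the density step underlying the mean-value identity, because one must avoid any equi-integrability loss when the two oscillating scales $x/\varepsilon$ and $x/\varepsilon^{2}$ interact; the safest route is simply to invoke the statement already proved in \cite{FNE reit} rather than reproduce it in full.
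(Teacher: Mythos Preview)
Your proposal is correct and in fact gives more than the paper does: the paper simply cites \cite[Proof of Lemma 2.3]{FNE reit} for this lemma without reproducing any argument. Your sketch---reducing both the weak convergence and the norm convergence to the $L^{1}$ multiscale mean-value identity applied respectively to $uf$ and to $B(|u|/k)$, and then passing to the Luxemburg norm via the modular---is exactly the standard route one expects behind that citation, and each step is sound under the standing $\triangle_{2}$ hypotheses.
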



The subsequent results, useful in the remainder of the paper, explicitly for the construction of sequences which ensure the energy convergence in Theorem \ref{2},  have been proven in \cite[Section 2.3]{FNE reit}. 

\begin{proposition}\label{compactnessLB}
Given a bounded sequence $\left(
u_{\varepsilon }\right) _{\varepsilon}\subset L^{B}\left( \Omega
\right) ,$ one can extract a not relabelled subsequence such that $%
\left( u_{\varepsilon }\right) _{\varepsilon}$\ is
reiteratively weakly two-scale convergent in $L^{B}\left( \Omega \right) .$
\end{proposition}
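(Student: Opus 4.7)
The plan is to extract the limit through a duality and diagonal-extraction argument, modeled on the one-scale proof in \cite{fotso nnang 2012} but now using the test-function space $L^{\widetilde B}(\Omega;\mathcal C_{per}(Y\times Z))$ appropriate to the three-scale setting.

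First, for each $\varepsilon>0$, I would introduce the linear functional
\[
L_\varepsilon : L^{\widetilde B}(\Omega;\mathcal C_{per}(Y\times Z))\to \mathbb R,\qquad L_\varepsilon(f):=\int_\Omega u_\varepsilon\, f^\varepsilon\, dx,
\]
where $f^\varepsilon$ is defined in \eqref{ve}. Using the H\"older-type inequality (i) for Orlicz spaces yields $|L_\varepsilon(f)|\le 2\|u_\varepsilon\|_{B,\Omega}\,\|f^\varepsilon\|_{\widetilde B,\Omega}$, and by the preceding lemma $\|f^\varepsilon\|_{\widetilde B,\Omega}\to \|f\|_{\widetilde B,\Omega\times Y\times Z}$ as $\varepsilon\to 0$. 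Since $(u_\varepsilon)_\varepsilon$ is bounded in $L^B(\Omega)$, this provides a uniform bound $|L_\varepsilon(f)|\le C\,\|f\|_{\widetilde B,\Omega\times Y\times Z}$ for all sufficiently small $\varepsilon$ and every admissible test function.

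Next, I would invoke the separability of $L^{\widetilde B}(\Omega;\mathcal C_{per}(Y\times Z))$, which follows from the $\triangle_2$ hypothesis on $\widetilde B$ together with the separability of $\mathcal C_{per}(Y\times Z)$. Picking a countable dense family and applying a standard Cantor diagonal extraction, I obtain a subsequence (not relabelled) along which $L_\varepsilon(f)$ converges for every $f$ in that dense family, and hence, by the uniform bound, for every $f\in L^{\widetilde B}(\Omega;\mathcal C_{per}(Y\times Z))$. The pointwise limit defines a bounded linear functional $L$ on this space satisfying $|L(f)|\le C\,\|f\|_{\widetilde B,\Omega\times Y\times Z}$.

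The last step, which I expect to be the main obstacle, is the representation of $L$ by a function $u_0\in L^B_{per}(\Omega\times Y\times Z)$. The idea is that $L^{\widetilde B}(\Omega;\mathcal C_{per}(Y\times Z))$ is dense in the subspace of $Y\times Z$-periodic elements of $L^{\widetilde B}(\Omega\times Y\times Z)$, a fact that exploits both $\widetilde B\in\triangle_2$ (to ensure $\mathcal D$-type approximation) and the density of $\mathcal C_{per}(Y\times Z)$ in $L^{\widetilde B}_{per}(Y\times Z)$. Thanks to the bound of $L$ in terms of $\|f\|_{\widetilde B,\Omega\times Y\times Z}$, the functional $L$ extends continuously to this closed periodic subspace, and by the duality $(L^{\widetilde B})'=L^B$, valid precisely because both $B$ and $\widetilde B$ satisfy $\triangle_2$, the extension is given by integration against a uniquely determined $u_0\in L^B_{per}(\Omega\times Y\times Z)$. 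Passing to the limit in the defining identity for $L_\varepsilon$ then yields
\[
\int_\Omega u_\varepsilon f^\varepsilon\,dx \;\longrightarrow\; \iiint_{\Omega\times Y\times Z} u_0\, f\, dx\, dy\, dz
\]
for every $f\in L^{\widetilde B}(\Omega;\mathcal C_{per}(Y\times Z))$, which is exactly \eqref{2b} and completes the extraction. The delicate point is really the density and continuous-extension step in the Orlicz setting, where the standard $L^p$ reasoning carries over only thanks to the $\triangle_2$ assumption on both $B$ and $\widetilde B$.
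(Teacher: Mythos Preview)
Your proposal is correct and follows the standard route that the paper has in mind: the paper does not give its own proof here but simply refers to \cite[Section~2.3]{FNE reit}, where exactly this duality/diagonal-extraction argument (an Orlicz version of the Allaire--Nguetseng compactness proof) is carried out. One small refinement: the uniform-in-$\varepsilon$ bound on $L_\varepsilon$ is most cleanly obtained from the pointwise estimate $|f^\varepsilon(x)|\le \|f(x,\cdot,\cdot)\|_{\infty}$, which gives $\|f^\varepsilon\|_{\widetilde B,\Omega}\le \|f\|_{L^{\widetilde B}(\Omega;\mathcal C_{per}(Y\times Z))}$ for \emph{every} $\varepsilon$, rather than from the convergence $\|f^\varepsilon\|_{\widetilde B,\Omega}\to\|f\|_{\widetilde B,\Omega\times Y\times Z}$, which would make the threshold in $\varepsilon$ depend on $f$ and complicate the passage from the dense family to the whole test space.
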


\begin{proposition}
If a sequence $\left( u_{\varepsilon }\right) _{\varepsilon}$ is
weakly reiteratively two-scale convergent in $L^{B}\left( \Omega \right) $
to $u_{0}\in L_{per}^{B}\left( \Omega \times Y\times Z\right) $ then: 
\begin{itemize}
\item[(i)]  $u_{\varepsilon }\rightharpoonup \int_{Z}u_0\left(
\cdot,\cdot,z\right) dz$ in $L^{B}\left( \Omega \right) $ weakly two-scale;
 \item[(ii)]  $u_{\varepsilon }\rightarrow \widetilde{u_0}$ in $L^{B}\left(
\Omega \right) $-weakly, as $\varepsilon \to 0$ where $\widetilde{u_0}%
\left( x\right) =\int \!\!\int_{Y\times Z}u_0\left( x,\cdot,\cdot\right) dydz.$
\end{itemize}
\end{proposition}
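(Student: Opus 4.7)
The plan is to deduce both assertions directly from the defining relation \eqref{2b} by choosing suitably degenerate admissible test functions, namely functions that do not actually depend on one or both of the fast variables, and then to apply Fubini's theorem to the resulting limit integrals. Since the definition of reiterated weak two-scale convergence only controls integrals against $f\in L^{\widetilde B}(\Omega;\mathcal C_{per}(Y\times Z))$, the strategy is to embed the smaller test classes ($L^{\widetilde B}(\Omega;\mathcal C_{per}(Y))$ for (i), and $L^{\widetilde B}(\Omega)$ for (ii)) into this larger class by declaring the functions constant in the missing variables.

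For (i), fix $g\in L^{\widetilde B}(\Omega;\mathcal C_{per}(Y))$ and view it as an element of $L^{\widetilde B}(\Omega;\mathcal C_{per}(Y\times Z))$ independent of $z$. Then the expression \eqref{ve} yields $g^\varepsilon(x)=g(x,x/\varepsilon)$, so applying \eqref{2b} gives
\begin{equation*}
\int_\Omega u_\varepsilon(x)\,g\!\left(x,\tfrac{x}{\varepsilon}\right)dx \;\longrightarrow\; \iiint_{\Omega\times Y\times Z} u_0(x,y,z)\,g(x,y)\,dx\,dy\,dz.
\end{equation*}
By Fubini's theorem, justified by the integrability of $u_0$ on $\Omega\times Y\times Z$ and the boundedness of $g(\cdot,\cdot)$ in the $(y,z)$-direction, the right-hand side equals $\iint_{\Omega\times Y}\bigl(\int_Z u_0(x,y,z)\,dz\bigr)g(x,y)\,dx\,dy$. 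Arbitrariness of $g$ together with the comparison \eqref{2sc} is exactly the desired weak two-scale convergence; one has in passing to remark that $\int_Z u_0(\cdot,\cdot,z)\,dz$ lies in $L^B_{per}(\Omega\times Y)$, which follows from Jensen's inequality applied to the convex $N$-function $B$.

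For (ii), I repeat the argument with $\varphi\in L^{\widetilde B}(\Omega)$ regarded as a constant-in-$(y,z)$ element of $L^{\widetilde B}(\Omega;\mathcal C_{per}(Y\times Z))$. Then $\varphi^\varepsilon\equiv \varphi$, and \eqref{2b} together with Fubini gives
\begin{equation*}
\int_\Omega u_\varepsilon\,\varphi\,dx \;\longrightarrow\; \iiint_{\Omega\times Y\times Z} u_0(x,y,z)\varphi(x)\,dx\,dy\,dz \;=\; \int_\Omega \widetilde{u_0}(x)\varphi(x)\,dx,
\end{equation*}
which is precisely weak convergence of $u_\varepsilon$ to $\widetilde{u_0}$ in the $(L^B,L^{\widetilde B})$ duality; here $\widetilde{u_0}\in L^B(\Omega)$ again by Jensen applied to $B$.

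The only mildly subtle point, which I would address explicitly, is that the candidate test functions used above really do belong to $L^{\widetilde B}(\Omega;\mathcal C_{per}(Y\times Z))$ and that formula \eqref{ve} reproduces, for them, the familiar expressions $g(x,x/\varepsilon)$ and $\varphi(x)$; both facts are immediate once one declares the test function constant in the absent fast variable, so no serious obstacle is expected. Everything else is essentially a specialization of the definition plus Fubini.
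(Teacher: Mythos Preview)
Your argument is correct and is exactly the natural one: specialize the test class in \eqref{2b} to functions independent of $z$ (resp.\ of $(y,z)$), apply Fubini, and use Jensen for $B$ to check that the averaged limit lies in the right space. The paper does not give its own proof of this proposition; it simply records it as a preliminary result imported from \cite[Section 2.3]{FNE reit}, so there is nothing substantively different to compare against---your approach is the standard one behind that citation.
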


\noindent Set $\mathfrak{X}_{per}^{B,\infty }\left( 
\mathbb{R}_{y}^{N};\mathcal C_b (\mathbb R^N_z)\right) =\mathfrak{X}_{per}^{B}\left( 
\mathbb{R}_{y}^{N};\mathcal C_b(\mathbb R^N_z)\right) \cap L^{\infty }\left( 
\mathbb{R}_{y}^{N};\mathcal C_b(\mathbb R^N_z) \right)$, where $\mathcal C_b(\mathbb R^N)$ denotes the space of continuous and bounded functions in $\mathbb R^N$, and $\mathfrak{X}_{per}^{B}\left( \mathbb{R}_{y}^{N};\mathcal C_b(\mathbb R^N_z)\right) $ is the closure of $\mathcal{C}_{per}\left(
Y\times Z\right) $ in $\Xi ^{B}\left(\mathbb{R}_{y}^{N};\mathcal C_b(\mathbb R^N_z)\right)$, defined as $$\Xi ^{B}\left(\mathbb{R}_{y}^{N};\mathcal C_b(\mathbb R^N_z)\right) :=\left\{ 
\begin{array}{c}
u\in L_{loc}^{B}\left( \mathbb{R}_{y}^{N};\mathcal C_b\left( 
\mathbb{R}_{z}^{N}\right) \right) :\text{ for every }U\in \mathcal A\left( 
\mathbb{R}_{y}^{N}\right) , \\ 
\underset{0<\varepsilon \leq 1}{\sup }\inf \left\{ k>0,\displaystyle{\int_{U}B\left( \frac{%
\left\Vert u\left( \frac{x}{\varepsilon },\cdot \right) \right\Vert _{\infty }}{k}\right) dx\leq 1}\right\} <+\infty%
\end{array}
\right\} $$ endowed with the norm:
$$\left\Vert u\right\Vert _{\Xi ^{B}\left( \mathbb{R}_{y}^{N},\mathcal C_b (\mathbb R^N)\right) }=\underset{0<\varepsilon \leq 1}{\sup }\inf
\left\{ k>0, \displaystyle{\int_{B\left( 0,1\right) }B\left( \frac{\left\Vert u\left( \frac{%
x}{\varepsilon },\cdot \right) \right\Vert _{\infty }}{k}\right) dx\leq 1}\right\}
.$$

\begin{proposition}\label{lemma2.2}
If a sequence $\left( u_{\varepsilon }\right) _{\varepsilon}$ is
weakly reiteratively two-scale convergent in $L^{B}\left( \Omega \right) $
to $u_{0}\in L_{per}^{B}\left( \Omega \times Y\times Z\right) $ we also have 
$\int_{\Omega }u_{\varepsilon }f^{\varepsilon }dx\rightarrow \int\!\int\!
\int_{\Omega \times Y\times Z}u_{0}fdxdydz,$ for all $f\in \mathcal{C}\left( 
\overline{\Omega }\right) \otimes \mathfrak{X}_{per}^{B,\infty }\left(\mathbb R_y^N;\mathcal C_b(\mathbb R^N_z)\right) .$
\end{proposition}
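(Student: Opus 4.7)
The plan is to extend the weak reiterated two-scale convergence from test functions in $L^{\tilde B}(\Omega;\mathcal C_{per}(Y\times Z))$ (where Definition \ref{def3s} applies directly) to the wider class $\mathcal C(\overline\Omega)\otimes\mathfrak X^{B,\infty}_{per}(\mathbb R^N_y;\mathcal C_b(\mathbb R^N_z))$, by a density argument combined with a uniform-in-$\varepsilon$ error estimate. By bilinearity it suffices to treat a pure tensor $f=\varphi\otimes\psi$, with $\varphi\in\mathcal C(\overline\Omega)$ and $\psi\in\mathfrak X^{B,\infty}_{per}$. Using the very definition of $\mathfrak X^B_{per}$ as the $\Xi^B$-closure of $\mathcal C_{per}(Y\times Z)$, one can choose $(\psi_n)\subset\mathcal C_{per}(Y\times Z)$ with $\|\psi_n-\psi\|_{\Xi^B}\to 0$ and, by truncating at height $M:=\|\psi\|_\infty+1$, arrange that $\|\psi_n\|_\infty\leq M$ uniformly in $n$.

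Setting $f_n:=\varphi\otimes\psi_n\in L^{\tilde B}(\Omega;\mathcal C_{per}(Y\times Z))$, equation \eqref{2b} gives $\int_\Omega u_\varepsilon f_n^\varepsilon\,dx\to\iiint_{\Omega\times Y\times Z} u_0 f_n\,dxdydz$ as $\varepsilon\to 0$ for each fixed $n$. A standard diagonal argument then reduces the claim to the two uniform error estimates
$$\sup_{\varepsilon>0}\Big|\int_\Omega u_\varepsilon(f^\varepsilon-f_n^\varepsilon)\,dx\Big|\to 0,\qquad \Big|\iiint_{\Omega\times Y\times Z}u_0(f-f_n)\,dxdydz\Big|\to 0 \quad\text{as }n\to\infty.$$
Applying the Orlicz Hölder inequality (item (i) of subsection \ref{SOsp}) and recalling that $\|u_\varepsilon\|_{B,\Omega}$ is automatically bounded as a consequence of weak reiterated two-scale convergence, together with $\|\varphi\|_\infty<\infty$, the first reduces to proving $\|\psi^\varepsilon-\psi_n^\varepsilon\|_{\tilde B,\Omega}\to 0$ uniformly in $\varepsilon$.

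For this $L^{\tilde B}$-convergence, note that the scaling structure of the $\Xi^B$ norm together with convexity of $B$ (using $B(t/C)\leq (1/C)B(t)$ for $C\geq 1$ and the substitution $x=Ry$ over a ball $B(0,R)\supset\Omega$) yields $\|\psi^\varepsilon-\psi_n^\varepsilon\|_{B,\Omega}\leq C(\Omega)\|\psi-\psi_n\|_{\Xi^B}$, uniformly in small $\varepsilon$; via the continuous embedding $L^B(\Omega)\hookrightarrow L^1(\Omega)$, this gives the uniform $L^1$ convergence $\|\psi^\varepsilon-\psi_n^\varepsilon\|_{L^1(\Omega)}\to 0$. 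Exploiting the uniform $L^\infty$ bound $|\psi^\varepsilon-\psi_n^\varepsilon|\leq K:=M+\|\psi\|_\infty$ and the elementary convexity estimate $\tilde B(t)\leq (t/K)\tilde B(K)$ on $[0,K]$, the modular $\int_\Omega\tilde B(|\psi^\varepsilon-\psi_n^\varepsilon|)\,dx$ is controlled by a constant multiple of $\|\psi^\varepsilon-\psi_n^\varepsilon\|_{L^1(\Omega)}$, hence tends to $0$ uniformly in $\varepsilon$. Since $\tilde B\in\Delta_2$, modular convergence to $0$ is equivalent to Luxemburg norm convergence, delivering the required $\|\psi^\varepsilon-\psi_n^\varepsilon\|_{\tilde B,\Omega}\to 0$. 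The second error bound is handled analogously, using $u_0\in L^B_{per}(\Omega\times Y\times Z)$ and the fact that $\psi-\psi_n\to 0$ in $L^B_{per}(Y\times Z)$ inherited from the $\Xi^B$-approximation.

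The main obstacle is the transition from the $L^B$-type control furnished by the $\Xi^B$-approximation to the $L^{\tilde B}$-type control required by the Hölder pairing against $u_\varepsilon$; this gap is bridged precisely by the uniform $L^\infty$ bound obtained via truncation together with the $\Delta_2$ assumption on $\tilde B$, which converts modular convergence into Luxemburg norm convergence.
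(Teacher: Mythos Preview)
The paper does not actually prove this proposition: it is listed among the preliminary results that, as the paper states just before Proposition~\ref{compactnessLB}, ``have been proven in \cite[Section 2.3]{FNE reit}''. So there is no in-paper argument to compare your proposal against.

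That said, your approach is the natural one and is correct. The density argument combined with the uniform-in-$\varepsilon$ error estimate is exactly how such extensions are established, and your key observation --- that the $L^\infty$ bound on the (truncated) approximants $\psi_n$ lets one interpolate from the $L^1$ control furnished by the $\Xi^B$-approximation up to the $L^{\tilde B}$ control needed for the H\"older pairing, via the modular inequality $\tilde B(t)\leq (t/K)\tilde B(K)$ on $[0,K]$ and the $\Delta_2$ equivalence of modular and norm convergence --- is precisely what the hypothesis $\psi\in\mathfrak X^{B,\infty}_{per}$ (rather than merely $\mathfrak X^B_{per}$) is there to make possible. The scaling step is also fine: since $|(\psi-\psi_n)^\varepsilon(x)|\leq\|(\psi-\psi_n)(x/\varepsilon,\cdot)\|_\infty=:h(x/\varepsilon)$ depends only on the single scale $x/\varepsilon$, the substitution $x=Ry$ sends $h(x/\varepsilon)$ to $h(y/(\varepsilon/R))$, and one stays within the range $0<\varepsilon/R\leq 1$ of the $\Xi^B$-norm supremum, so the bound $\|(\psi-\psi_n)^\varepsilon\|_{B,\Omega}\leq C(\Omega)\|\psi-\psi_n\|_{\Xi^B}$ follows as you claim. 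The boundedness of $\|u_\varepsilon\|_{B,\Omega}$ is indeed a consequence of weak reiterated two-scale convergence (Banach--Steinhaus applied to test functions depending on $x$ alone), so that step is also justified.
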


\begin{corollary}
Let $v\in \mathcal{C}\left( \overline{\Omega };\mathfrak{X}_{per}^{B,\infty
}\left( 
\mathbb{R}
_{y}^{N};\mathcal C_b(\mathbb R^N_z)\right) \right).$ Then $v^{\varepsilon } \overset{reit-2s}{\rightharpoonup} v$
in $L^{B}\left( \Omega \right)$ as $\varepsilon
\rightarrow 0.$
\end{corollary}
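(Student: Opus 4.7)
The plan is to prove the stated reiterated two-scale convergence directly from Definition \ref{def3s} by a density argument, reducing to a class of test fields on which the Lemma preceding Proposition \ref{compactnessLB} already applies, and absorbing the approximation error uniformly in $\varepsilon$ via the structure of the $\Xi^B$-norm.

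First, since $\mathfrak{X}^B_{per}(\mathbb R^N_y;\mathcal C_b(\mathbb R^N_z))$ is by definition the closure of $\mathcal{C}_{per}(Y\times Z)$ in $\Xi^B$, I would approximate $v$ by a sequence $(v_n)\subset\mathcal{C}(\overline\Omega;\mathcal{C}_{per}(Y\times Z))$ with $v_n\to v$ in the norm of $\mathcal{C}(\overline\Omega;\Xi^B(\mathbb R^N_y;\mathcal C_b(\mathbb R^N_z)))$. Since $v_n\in L^B(\Omega;\mathcal{C}_{per}(Y\times Z))$, the Lemma preceding Proposition \ref{compactnessLB} yields $v_n^\varepsilon\overset{reit-2s}{\rightharpoonup}v_n$ in $L^B(\Omega)$; that is, for each fixed $n$ and every $f\in L^{\widetilde B}(\Omega;\mathcal{C}_{per}(Y\times Z))$,
\[
\int_\Omega v_n^\varepsilon f^\varepsilon\,dx \xrightarrow[\varepsilon\to 0]{} \iiint_{\Omega\times Y\times Z} v_n\,f\,dx\,dy\,dz.
\]

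Next, for such a fixed test function $f$, I would decompose via the triangle inequality
\[
\left|\int_\Omega v^\varepsilon f^\varepsilon\,dx-\iiint_{\Omega\times Y\times Z} v\,f\,dx\,dy\,dz\right|\le A_n^\varepsilon+B_n^\varepsilon+C_n,
\]
where $A_n^\varepsilon=\left|\int_\Omega(v^\varepsilon-v_n^\varepsilon)f^\varepsilon\,dx\right|$, $B_n^\varepsilon$ is the error already treated in the first step, and $C_n=\left|\iiint_{\Omega\times Y\times Z}(v-v_n)f\right|$. The generalized H\"older inequality gives $A_n^\varepsilon\le 2\,\|v^\varepsilon-v_n^\varepsilon\|_{B,\Omega}\,\|f^\varepsilon\|_{\widetilde B,\Omega}$, and the $\widetilde B$-analogue of the aforementioned Lemma ensures $\sup_\varepsilon\|f^\varepsilon\|_{\widetilde B,\Omega}<\infty$. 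For $C_n$, I would invoke dominated convergence using the $L^\infty$ bound intrinsic to $\mathfrak{X}^{B,\infty}_{per}$. Sending $\varepsilon\to 0$ at fixed $n$ kills $B_n^\varepsilon$, and then $n\to\infty$ takes care of $C_n$.

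The crux --- and the step where the definition of $\mathfrak{X}^{B,\infty}_{per}$ really pays off --- will be establishing the uniformity
\[
\lim_{n\to\infty}\sup_{0<\varepsilon\le 1}\|v^\varepsilon-v_n^\varepsilon\|_{B,\Omega}=0.
\]
This is exactly the purpose of the $\sup_\varepsilon$ built into the norm on $\Xi^B$, but converting the local normalization on $B(0,1)$ appearing there into a Luxemburg bound on all of $\Omega$ requires some care: I would cover $\Omega$ by finitely many translates/dilates of $B(0,1)$, apply the $\Xi^B$-norm estimate on each piece, and use the convexity of $B$ together with the $\Delta_2$-condition (shared by $B$ and $\widetilde B$) to pass from modular control to norm control uniformly in $\varepsilon$ and $n$. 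Once this uniform-in-$\varepsilon$ estimate is in hand, the three bounds combine in the standard way to identify $v$ as the weak reiterated two-scale limit of $(v^\varepsilon)_\varepsilon$.
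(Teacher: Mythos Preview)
The paper does not actually prove this corollary: it is stated among a block of results that the authors attribute to \cite[Section 2.3]{FNE reit}, so there is no argument in the text to compare against.

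Your density strategy is sound and is the natural way to obtain such a result. One technical point deserves more attention than you give it. In the uniform estimate $\sup_{0<\varepsilon\le 1}\|v^\varepsilon-v_n^\varepsilon\|_{B,\Omega}\to 0$, the variable $x$ plays a double role in $(v-v_n)^\varepsilon(x)=(v-v_n)(x,x/\varepsilon,x/\varepsilon^2)$: it is simultaneously the slow macroscopic variable and the source of the oscillation. The $\Xi^B$-norm of $(v-v_n)(x,\cdot,\cdot)$ controls, for each \emph{fixed} $x$, the quantity $\sup_{0<\varepsilon\le 1}\int_{B(0,1)}B\bigl(\|(v-v_n)(x,\xi/\varepsilon,\cdot)\|_\infty/k\bigr)\,d\xi$, which is not immediately $\int_\Omega B\bigl(|(v-v_n)(x,x/\varepsilon,x/\varepsilon^2)|/k\bigr)\,dx$. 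Your covering of $\Omega$ by copies of $B(0,1)$ handles the change of domain, but you also need to freeze the first argument: use uniform continuity of $x\mapsto(v-v_n)(x,\cdot,\cdot)$ from the compact set $\overline\Omega$ into $\Xi^B$, partition $\Omega$ into small pieces $\Omega_j$ with reference points $x_j$, replace $(v-v_n)(x,\cdot,\cdot)$ by $(v-v_n)(x_j,\cdot,\cdot)$ on $\Omega_j$, and only then invoke the $\Xi^B$-bound (now on a function independent of the integration variable). The $\Delta_2$ condition lets you pass between modular and norm, as you say. This is routine once noticed, but your sketch conflates the two roles of $x$ and should make the freezing step explicit.
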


\begin{remark}
Consequently it results that
\begin{itemize} 
\item[(i)]If $v\in L^{B}\left( \Omega ;\mathcal{C}_{per}\left( Y\times Z\right)
\right) ,$ then $v^{\varepsilon }\rightarrow v$ reiteratively strongly two-scale in $L^{B}\left( \Omega
\right)$, as $\varepsilon \rightarrow 0.$
\item[(ii)] If $\left( u_{\varepsilon }\right) _{\varepsilon}$ $\subset
L^{B}\left( \Omega \right) $ is strongly reiteratively two-scale convergent in $%
L^{B}\left( \Omega \right) $\ to $u_{0}\in L_{per}^{B}\left( \Omega \times
Y\times Z\right) ,$ then
\begin{itemize}
\item[(a)] $u_{\varepsilon }\overset{reit-2s}{\rightharpoonup} u_{0}$ in $L^{B}\left(
\Omega \right) $ as $\varepsilon \rightarrow 0$;
\item[(b)] $\left\Vert u_{\varepsilon }\right\Vert _{B,\Omega }\rightarrow
\left\Vert u_{0}\right\Vert _{B,\Omega \times Y\times Z}$ as
$\varepsilon \rightarrow 0.$
\end{itemize}
\end{itemize}
\end{remark}

The following is a sequential compactness result on $W^{1}L^{B}\left(
\Omega \right),$  (see \cite{fotso nnang 2012} and \cite{FNE reit} for a proof and related results) that will be used in the sequel.

\begin{proposition}\label{compactnessW1LB}
Let $( u_{\varepsilon}) _\varepsilon$ bounded in $W^{1}L^{B}\left( \Omega \right).$
There exists a not relabelled subsequence, $u_{0}\in W^{1}L^{B}\left(
\Omega \right) ,\left( u_{1},u_{2}\right) \in L^{1}\left(
\Omega ;W_{\#}^{1}L^{B}\left( Y\right) \right) \times L^{1}\left( \Omega
;L^1_{per}\left( Y;W_{\#}^{1}L^{B}\left( Z\right) \right) \right) $ such
that:
\begin{itemize}\item[(i)] $u_{\varepsilon }\overset{reit-2s}{\rightharpoonup} u_{0}$ in $L^{B}\left(
\Omega \right) $,

\item[(ii)] $D_{x_{i}}u_{\varepsilon }\overset{reit-2s}{\rightharpoonup}
D_{x_{i}}u_{0}+D_{y_{i}}u_{1}+D_{z_{i}}u_{2}$ in $L^{B}\left( \Omega
\right), 1\leq i\leq N$, as $ \varepsilon
\rightarrow 0.$
\end{itemize}
\end{proposition}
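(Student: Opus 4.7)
The plan is to adapt the classical reiterated two-scale compactness argument (Allaire--Briane) to the Orlicz-Sobolev setting, exploiting that $B,\widetilde{B}\in\triangle_2$ makes density of smooth periodic functions and the $L^B$/$L^{\widetilde{B}}$ duality behave as in the $L^p$ case. First I would apply Proposition~\ref{compactnessLB} to the bounded sequences $(u_\varepsilon)$ and $(D_{x_i}u_\varepsilon)$, $1\leq i\leq N$, in $L^B(\Omega)$. A diagonal extraction yields a not relabelled subsequence together with functions $v_0,\xi_i\in L^B_{per}(\Omega\times Y\times Z)$ such that $u_\varepsilon\overset{reit-2s}{\rightharpoonup}v_0$ and $D_{x_i}u_\varepsilon\overset{reit-2s}{\rightharpoonup}\xi_i$ in $L^B(\Omega)$.

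To see that $v_0$ depends only on $x$, so that we may set $u_0:=v_0$, I would test the integration-by-parts identity
$$\int_\Omega \nabla u_\varepsilon\cdot\varphi\,\psi^\varepsilon\,dx = -\int_\Omega u_\varepsilon\,\mathrm{div}(\varphi\,\psi^\varepsilon)\,dx$$
with $\varphi\in\mathcal D(\Omega)$ and $\psi\in\mathcal C^\infty_{per}(Y\times Z)^N$. Using the expansion $\mathrm{div}(\varphi\,\psi^\varepsilon)=(\nabla\varphi\cdot\psi)^\varepsilon+\tfrac{1}{\varepsilon}\,\varphi(\mathrm{div}_y\psi)^\varepsilon+\tfrac{1}{\varepsilon^2}\,\varphi(\mathrm{div}_z\psi)^\varepsilon$, multiplying by $\varepsilon^2$ and letting $\varepsilon\to 0$ forces $\iiint v_0\,\varphi\,\mathrm{div}_z\psi\,dx\,dy\,dz=0$, so $v_0$ is $z$-independent; then multiplying by $\varepsilon$ and restricting to $\psi$ with $\mathrm{div}_z\psi=0$ gives $v_0$ also $y$-independent, hence $u_0\in L^B(\Omega)$. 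Passing to the limit without rescaling then recovers $u_0\in W^1L^B(\Omega)$.

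To produce the correctors, I would restrict the same identity to test fields $\psi$ satisfying $\mathrm{div}_y\psi=0$, $\mathrm{div}_z\psi=0$ and having vanishing $Y\times Z$-mean, and pass to the limit. This yields
$$\iiint_{\Omega\times Y\times Z}(\xi_i-D_{x_i}u_0)\,\varphi\,\psi_i\,dx\,dy\,dz=0,$$
i.e., the defect $\xi-\nabla_x u_0$ is $L^B$/$L^{\widetilde{B}}$-orthogonal to every reiterated two-scale divergence-free test field. A reiterated periodic de Rham argument (first integrating out the $z$-variable to identify $u_1\in L^1(\Omega;W^1_{\#}L^B(Y))$, then recovering $u_2\in L^1(\Omega;L^1_{per}(Y;W^1_{\#}L^B(Z)))$ from the residual) then yields the decomposition $\xi_i-D_{x_i}u_0=D_{y_i}u_1+D_{z_i}u_2$.

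The principal obstacle will be the periodic de Rham/Helmholtz decomposition in the Orlicz framework: in the $L^p$ setting this rests on the boundedness of the Helmholtz projection, which has no direct analogue here. It must be replaced by a density-plus-duality scheme, available precisely because $B,\widetilde{B}\in\triangle_2$, together with a measurable selection/Fubini argument to secure the $L^1(\Omega;\cdot)$ regularity of the correctors; this reduced regularity in $x$ reflects the fact that $x$ enters only parametrically in the cell problems.
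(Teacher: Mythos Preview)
The paper does not actually prove this proposition: immediately before the statement it reads ``see \cite{fotso nnang 2012} and \cite{FNE reit} for a proof and related results,'' so there is no in-paper argument to compare against. Your outline is the standard Allaire--Briane compactness scheme transported to the Orlicz setting, and this is exactly what those references carry out.

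One comment on the obstacle you flag. You are right that a direct Helmholtz/de~Rham decomposition in $L^B_{per}$ is the delicate point; however, the paper itself indicates an alternative route that bypasses it. Under the standing hypothesis that $B,\widetilde B\in\triangle_2$, one has continuous embeddings $L^q\hookrightarrow L^B\hookrightarrow L^p$ for some $1<p\le q$ (the paper records this as assumption $(H)$ and observes it is automatic here). One may therefore invoke the classical $L^p$ reiterated two-scale compactness of Allaire--Briane to obtain the decomposition $\xi_i=D_{x_i}u_0+D_{y_i}u_1+D_{z_i}u_2$ with correctors in the $L^p$-based spaces, and then upgrade their regularity a posteriori: since $\xi\in L^B_{per}(\Omega\times Y\times Z)$, averaging in $z$ shows $D_y u_1\in L^B$, whence $D_z u_2\in L^B$; the Poincar\'e--Wirtinger inequality in Orlicz spaces (invoked in Remark~\ref{Cianchi}) then gives $u_1\in L^B(\Omega;W^1_\#L^B(Y))$ and $u_2\in L^B(\Omega;L^B_{per}(Y;W^1_\#L^B(Z)))$, which is in fact stronger than the $L^1$ regularity stated. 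This embedding-plus-bootstrap strategy is precisely how the paper handles the analogous second-order statement in the proof of Theorem~\ref{1}, and it avoids having to redo periodic elliptic theory in Orlicz spaces.
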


If $\left( i\right) $ and $\left( ii\right) $ in the above Proposition hold, we will say that $%
u^{\varepsilon }\rightharpoonup u_{0}$ reiteratively weakly two-scale in $W^{1}L^{B}\left( \Omega \right)$, omitting to explicitly mention the functions $u_1, u_2$ above.

\begin{remark}
	\label{Cianchi} We observe that the fields $(u_1,u_2)$ in Proposition \ref{compactnessW1LB} are more regular than stated above and in \cite{FNZ1}. Indeed by $(ii)$, $D_{x_{i}}u_{0}+D_{y_{i}}u_{1}+D_{z_{i}}u_{2}\in L^B(\Omega\times Y \times Z;\mathbb R^{d\times N} )$. Thus, applying Poincar\'e-Wirtinger inequality, (see for instance \cite[Theorem 4.5]{C}) to $u_1$ with respect to $Y$ and for a.e. $x \in \Omega$ and to $u_2$ with respect to $z$ for a.e. $x\in \Omega$ and for any $y \in Y$, and then taking the integral over $\Omega$ for $u_1$ and over $\Omega \times Y$ for $u_2$, it is easily seen that the $L^B$ norm in $\Omega \times Y$ of $u_1$ is finite, and the same holds for the   $L^B$ norm in $\Omega \times Y\times Z$ for $u_2$, i.e. one can say that $u_1\in L^B(\Omega \times Y)$ and $u_2 \in L^B(\Omega \times Y\times Z)$, namely we can say that $u_1 \in L^B(\Omega;W^1_\# L^B(Y))$ and $u_2 \in L^B(\Omega;L^B_{\rm per}(Y);W^1_\# L^B(Z))$.
	
	Moreover, it is worth to observe that the same convergence holds for vector valued functions.
\end{remark}
\begin{corollary}
If $\left( u_{\varepsilon }\right) _{\varepsilon}$ is such that$\ \
u_{\varepsilon }\overset{reit-2s}{\rightharpoonup} u_{0}$ reiteratively weakly two-scale in $W^{1}L^{B}\left( \Omega \right)$, we have: 
\begin{itemize}
\item[(i)] $u_{\varepsilon }\rightharpoonup
\int_{Z}u_0\left( \cdot,\cdot,z\right) dz$ in $W^{1}L^{B}\left( \Omega \right) $
weakly two-scale;
\item[(ii)] $u_{\varepsilon }\rightharpoonup 
\widetilde{u_{0}}$ in $W^{1}L^{B}\left( \Omega \right) $ weakly,  where $\widetilde{u_0}\left( x\right) :=\int\!
\int_{Y\times Z}u_0\left( x,\cdot,\cdot\right) dydz.$
\end{itemize}
\end{corollary}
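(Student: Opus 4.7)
The plan is to reduce the assertion to two applications of the earlier proposition preceding Proposition~\ref{lemma2.2}: first to the sequence $(u_\varepsilon)$ itself, then componentwise to its partial derivatives $(D_{x_i}u_\varepsilon)$ for $i=1,\dots,N$. Proposition~\ref{compactnessW1LB}, combined with Remark~\ref{Cianchi}, guarantees that both sequences are weakly reiteratively two-scale convergent in $L^B(\Omega)$, with three-scale limits lying in $L^B_{per}(\Omega\times Y\times Z)$: the limit of $(u_\varepsilon)$ is $u_0$, viewed as a function constant in $(y,z)$ since $u_0 \in W^1L^B(\Omega)$, while the limit of $(D_{x_i}u_\varepsilon)$ is $\xi_i := D_{x_i}u_0 + D_{y_i}u_1 + D_{z_i}u_2$, where the regularity in Remark~\ref{Cianchi} ensures $\xi_i \in L^B(\Omega\times Y\times Z)$.

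For (i), one application of the quoted proposition immediately gives $u_\varepsilon \rightharpoonup \int_Z u_0\,dz = u_0$ weakly two-scale in $L^B(\Omega)$. A second application to $(D_{x_i}u_\varepsilon)$, combined with the identity
\begin{equation*}
\int_Z \xi_i(x,y,z)\,dz = D_{x_i}u_0(x) + D_{y_i}u_1(x,y),
\end{equation*}
in which the vanishing of $\int_Z D_{z_i}u_2\,dz$ follows from $Z$-periodicity of $u_2(x,y,\cdot)$, yields $D_{x_i}u_\varepsilon \rightharpoonup D_{x_i}u_0 + D_{y_i}u_1$ weakly two-scale in $L^B(\Omega)$. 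These two conclusions are precisely the definition of weak two-scale convergence of $u_\varepsilon$ to $u_0$ in $W^1L^B(\Omega)$, so (i) follows.

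For (ii), a further average over $Y$ kills the $D_{y_i}u_1$ contribution by $Y$-periodicity of $u_1(x,\cdot)$, so the same proposition gives $D_{x_i}u_\varepsilon \rightharpoonup D_{x_i}u_0 = D_{x_i}\widetilde{u_0}$ weakly in $L^B(\Omega)$; combined with $u_\varepsilon \rightharpoonup \widetilde{u_0} = u_0$ weakly in $L^B(\Omega)$, this is weak convergence in $W^1L^B(\Omega)$. The only technical point that merits attention is justifying the cancellation of the corrector integrals in the Orlicz framework, which is legitimate thanks to the regularities $u_1 \in L^B(\Omega; W^1_\#L^B(Y))$ and $u_2 \in L^B(\Omega; L^B_{per}(Y); W^1_\# L^B(Z))$ recorded in Remark~\ref{Cianchi}; once these are in place, integration by parts against test functions that are constant in the inner variable is routine and no real obstacle is anticipated.
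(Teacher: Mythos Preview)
Your proposal is correct and is precisely the intended argument: the paper states this result as a corollary with no proof, and the natural derivation is exactly what you outline --- apply the $L^B$-level proposition (the one between Proposition~\ref{compactnessLB} and Proposition~\ref{lemma2.2}) to $u_\varepsilon$ and componentwise to $D_{x_i}u_\varepsilon$, using the periodicity of $u_1$ and $u_2$ to kill the averaged corrector terms. Nothing further is needed.
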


Under our sets of assumptions on $\Omega$ and $B$, the canonical injection $%
W^{1}L^{B}\left( \Omega \right) \hookrightarrow L^{B}\left( \Omega \right) $
is compact, an so the reiterated weakly two-scale limit $u_{0}\in W^{1}L^{B}\left( \Omega \right) .$

\subsubsection{$\Gamma$ convergence and preliminary results on integral functionals}

\medskip

In the sequel we recall the definition of $\Gamma$-convergence in metric spaces. We refer to \cite{DM} for a complete treatment of the subject. 
\begin{definition}
Let $\left( X,d\right) $ be a metric space and let $\left( F_{\varepsilon
}\right) _{\varepsilon >0}$ be a family of functionals defined on $\left(
X,d\right) .$ We say that a functional $\mathcal{F}:X\times \mathcal{A}%
\left( \Omega \right) \rightarrow \left[ 0,+\infty \right] $ is the $\Gamma
\left( d\right) -\lim \inf $ (resp. the $\Gamma \left( d\right) -\lim \sup )$
of the family $\left( F_{\varepsilon }\right)_{\varepsilon
} $ if for every sequence $%
\left( \varepsilon _{n}\right)_n $ converging to $0^{+}$%
\begin{equation*}
\mathcal{F}\left( u,A\right) =\left\{ \underset{n\rightarrow +\infty }{\lim
\inf }\left( \text{ resp.}\underset{n\rightarrow +\infty }{\lim \sup }%
\right) F_{\varepsilon }\left( u_{n},A\right) :u_{n}\rightarrow u\text{ in }%
\left( X,d\right) \right\} \text{ }
\end{equation*}%
and write $\mathcal{F=}\Gamma \left( d\right) -\lim \inf \left( \text{ resp.}%
\lim \sup F_{\varepsilon }\right) .$ $\mathcal{F}$ is the $\Gamma \left(
d\right) -$limit of the familly $\left( F_{\varepsilon }\right)_{\varepsilon
} $ and we
write $\mathcal{F=}\Gamma \left( d\right) -\underset{\varepsilon }{\lim }%
F_{\varepsilon }$ if $\Gamma \left( d\right) -\lim \inf $ and $\Gamma \left(
d\right) -\lim \sup $ coincide.
\end{definition}

Next we recall for the readers' convenience Ioffe's lower semicontinuity theorem (see \cite[Theorem 1]{I}).

\begin{proposition}
\label{Ioffethm} Let $\Omega $
be an open, bounded subset of $%
\mathbb{R}
^{N},$ and let $\ f:\Omega \times 
\mathbb{R}
^{d}\times 
\mathbb{R}
^{m}\rightarrow \left[ 0,+\infty \right) ,f=f\left( x,u,v\right) $ be a
function Lebesgue measurale with respect to $x$ and Borel measurable with
respect to $\left( u,v\right) .$ Suppose further that $f\left( x,\cdot,\cdot\right) $%
\ is lower semicontinuous for a.e $x\in \Omega ,f\left( x,u,\cdot\right) $ is
convex for a.e $x\in \Omega ,u\in 
\mathbb{R}
^{d}$ and there exist $\left( u_{0},v_{0}\right) \in L^{B}\left( \Omega ;%
\mathbb{R}
^{d}\right) \times L^{B_{1}}\left( \Omega ;%
\mathbb{R}
^{m}\right) $ with $B,B_{1},N-$functions satisfying $\triangle _{2}$
condition, together with their conjugates, such that: $\int_{\Omega }f\left( x,u_{0}\left( x\right)
,v_{0}\left( x\right) \right) dx<+\infty .$ If $u_{n}\rightarrow u$ in $%
L^{B}\left( \Omega ;%
\mathbb{R}
^{d}\right) $ and $v_{n}\rightharpoonup v$ in $L^{B_{1}}\left( \Omega ;%
\mathbb{R}
^{m}\right) $ then $\int_{\Omega }f\left( x,u\left( x\right) ,v\left(
x\right) \right) dx\leq \underset{n\rightarrow +\infty }{\lim \inf }%
\int_{\Omega }f\left( x,u_{n}\left( x\right) ,v_{n}\left( x\right) \right)
dx.$
\end{proposition}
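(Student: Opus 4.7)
My plan is to deploy the classical Ioffe strategy via affine minorants, adapted to the Orlicz framework of the paper. The central idea is to exhibit $f(x,u,\cdot)$ as a countable monotone supremum of functions that are affine in $v$, whose coefficients are Carath\'eodory in $(x,u)$, and then exploit that each such affine functional is weak--strong lower semicontinuous: strong $L^B$-convergence of $u_n$ pairs with weak $L^{B_1}$-convergence of $v_n$ through the $L^{B_1}$--$L^{\tilde B_1}$ duality, which is available thanks to the $\Delta_2$ hypotheses on $B_1$ and $\tilde B_1$.

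Concretely, I would first invoke a measurable selection theorem of Castaing--Valadier type to produce sequences of functions $a_k \colon \Omega \times \mathbb{R}^d \to \mathbb{R}$ and $b_k \colon \Omega \times \mathbb{R}^d \to \mathbb{R}^m$ that are measurable in $x$, continuous in $u$, with $b_k$ bounded, such that
\[
f(x,u,v) = \sup_{k \in \mathbb{N}} \bigl(a_k(x,u) + b_k(x,u)\cdot v\bigr)
\]
for a.e.\ $x$ and every $(u,v)$; after replacing the family by $\max_{j \le k}(a_j + b_j\cdot v)$ it can additionally be assumed pointwise monotonically non-decreasing to $f$. This step rests on the convexity of $f(x,u,\cdot)$ together with the lower semicontinuity in $(u,v)$, and the existence of the finite-energy pair $(u_0,v_0)$ furnishes the proper minorization needed to legitimize the representation.

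For each fixed $k$ I would pass to the limit in the affine functional. Extracting a subsequence along which $u_n \to u$ almost everywhere, the Carath\'eodory character of $a_k,b_k$ yields $a_k(\cdot,u_n) \to a_k(\cdot,u)$ and $b_k(\cdot,u_n) \to b_k(\cdot,u)$ pointwise a.e. Since $b_k$ is bounded, a Vitali-type argument in $L^{\tilde B_1}(\Omega)$, admissible thanks to $\tilde B_1 \in \Delta_2$, upgrades this to strong convergence in $L^{\tilde B_1}$; pairing with $v_n \rightharpoonup v$ in $L^{B_1}$ handles the bilinear piece, while the $a_k$ term is controlled by a standard Fatou/Vitali argument using $f \geq 0$ and the integrability of $f(\cdot,u_0,v_0)$. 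This delivers, for each $k$,
\[
\int_\Omega \bigl(a_k(x,u) + b_k(x,u)\cdot v\bigr) dx \;\leq\; \liminf_{n \to \infty} \int_\Omega f(x,u_n,v_n) dx,
\]
and monotone convergence on the left as $k\to\infty$ gives the thesis.

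The main obstacle is the construction at step one: producing the Carath\'eodory affine minorants $(a_k,b_k)$ with honest continuity in $u$, rather than merely joint measurability, and with an integrability profile fitting the Orlicz framework so that $b_k(\cdot,u_n)$ indeed converges strongly in $L^{\tilde B_1}$. In the Lebesgue setting this is already delicate and classically uses Ekeland's variational principle or a Castaing selection from the subdifferential of the convex conjugate of $f(x,u,\cdot)$; the additional care here is verifying that the $\Delta_2$ assumption on $\tilde B_1$ is exactly what allows a.e.\ convergence of bounded sequences to be upgraded to strong convergence in $L^{\tilde B_1}$, which is where the hypothesis enters essentially.
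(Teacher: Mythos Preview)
The paper does not supply its own proof of this proposition: it is merely recalled from Ioffe's original article (the reference~\cite{I}, Theorem~1), so there is nothing in the paper to compare against beyond that citation. Your sketch follows the classical Ioffe strategy of Carath\'eodory affine minorants and is, in outline, the right argument.

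One small internal inconsistency worth flagging: once you replace the family by $\max_{j\le k}(a_j+b_j\cdot v)$ to force monotonicity, the approximants are no longer affine in $v$, so your later sentence about passing to the limit ``in the affine functional'' via ``the bilinear piece'' no longer applies literally. The standard way around this (and what Ioffe actually does) is a localization: for each finite set $J$ and each measurable partition $(E_j)_{j\in J}$ of $\Omega$, establish weak--strong lower semicontinuity of $\sum_{j\in J}\int_{E_j}\bigl(a_j(x,u)+b_j(x,u)\cdot v\bigr)\,dx$ exactly as you describe, and then take the supremum over all such $J$ and partitions to recover $\int_\Omega f$. Also, the $a_k$ piece is not quite a Fatou matter (the $a_k$ need not be bounded below); in Ioffe's argument one handles it jointly with the constraint $a_k+b_k\cdot v\le f$ and the integrability of $f(\cdot,u_0,v_0)$ via an Egorov/truncation step. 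These are refinements, not a missing idea; the core mechanism you identify---strong $L^{\tilde B_1}$ convergence of the bounded coefficients $b_k(\cdot,u_n)$ paired with weak $L^{B_1}$ convergence of $v_n$, legitimized by the $\Delta_2$ hypotheses---is correct.
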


The following results will be used in the sequel. We omit their proofs since they are entirely similar to their counterparts in the classical Sobolev setting (see \cite[Appendix]{Elvira 1} or \cite[Lemma 3.3]{CRZ} for similar arguments).

\begin{proposition}\label{Gammaextract}
Let $(F_{\varepsilon })_\varepsilon:W^{1,B}\left( \Omega ;%
\mathbb{R}
^{m}\right) \times \mathcal{A}\left( \Omega \right) \rightarrow \left[
0,+\infty \right)$ be a sequence of functionals satisfying: 
\begin{itemize}
	\item [(i)] $F_{\varepsilon }\left( u,\cdot \right) $ is the
restriction to $\mathcal{A}\left( \Omega \right) $ of a Radon measure;
\item[(ii)]$ F_{\varepsilon }\left( u,D\right) =F_{\varepsilon }\left(
v,D\right) $ whenever $u=v$ a.e in $D\in \mathcal{A}\left( \Omega \right)$;
\item[(iii)] there exists a positive constant $c$ such that 
\begin{equation*}
\frac{1}{C}\int_{D}B\left( \left\vert Du\right\vert \right) dx\leq
F_{\varepsilon }\left( u,D\right) \leq C\int_{D}\left( 1+B\left( \left\vert
Du\right\vert \right) \right) dx\text{ for every }\varepsilon >0.
\end{equation*}%
\end{itemize}
For every sequence $( \varepsilon _{n}) $ converging to $0^{+}$
there exists a subsequence $\left(\varepsilon _{j}\right) $\ such that the
functional $\mathcal{F}_{\left\{ \varepsilon _{j}\right\} }\left( u,D\right) 
$ is the $\Gamma \left( L^{B}\left( D\right) \right) $ limit of $(
F_{\varepsilon _{j}}\left( u,D\right) ) $ for every $D\in \mathcal{A}%
\left( \Omega \right) $ and $u\in W^{1,B}\left( D;%
\mathbb{R}
^{d}\right) ,$where for any sequence $(\delta _{n}) $
converging to $0^{+}$%
\begin{equation*}
\mathcal{F}_{\left\{ \delta _{n}\right\} }\left( u,D\right) :=\inf \left\{ 
\underset{n\rightarrow +\infty }{\lim \inf }F_{\delta _{n}}\left(
u_{n},D\right) :u_{n}\rightarrow u\text{ in }L^{B}\left( D\right) \right\} .
\end{equation*}
\end{proposition}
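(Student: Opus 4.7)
The plan is to adapt the classical De Giorgi compactness argument for $\Gamma$-convergence to the Orlicz-Sobolev setting, relying on the $\Delta_2$ property of $B$ and $\widetilde B$ (which, in particular, guarantees separability of $L^B(D)$ and of $W^{1}L^B(D;\mathbb{R}^d)$).

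First, for the set variable, I would fix a countable family $\mathcal{R}\subset\mathcal{A}(\Omega)$ (for instance, finite unions of open cubes with rational vertices) that is dense in $\mathcal{A}(\Omega)$ in the sense that every open set is the increasing union of its members compactly contained in it. For each $D\in\mathcal{R}$ the general $\Gamma$-compactness theorem on separable metric spaces (see \cite{DM}) applied to $F_{\varepsilon_n}(\cdot,D)$ on $L^B(D)$ yields a subsequence along which the $\Gamma(L^B(D))$-limit exists. A diagonal extraction over the countable family $\mathcal{R}$ then furnishes one subsequence $(\varepsilon_j)$ for which $\mathcal{F}_{\{\varepsilon_j\}}(\cdot,D)$ exists as a $\Gamma$-limit for every $D\in\mathcal{R}$.

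To extend the conclusion to an arbitrary $D\in\mathcal{A}(\Omega)$, I would exploit assumption $(i)$: for $u$ fixed, the set functions $D\mapsto\Gamma\text{-}\liminf F_{\varepsilon_j}(u,D)$ and $D\mapsto\Gamma\text{-}\limsup F_{\varepsilon_j}(u,D)$ inherit monotonicity and subadditivity from the Radon-measure property. Using $(iii)$ together with the $\Delta_2$ conditions, I would establish the fundamental estimate in the Orlicz-Sobolev setting by gluing recovery sequences through cut-off functions, from which one obtains inner regularity of both set functions. Coincidence on the dense family $\mathcal{R}$ then forces coincidence on all of $\mathcal{A}(\Omega)$, proving existence of the $\Gamma$-limit for every $(u,D)$. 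The fact that convergence is measured in $L^B(D)$ rather than $W^{1}L^B(D;\mathbb{R}^d)$ is compatible with the growth bound, because $(iii)$ forces sequences with bounded liminf to be bounded in $W^{1}L^B$ and the compact embedding $W^{1}L^B(D)\hookrightarrow L^B(D)$ recorded earlier in the paper aligns the two candidate $\Gamma$-limits.

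The main obstacle will be establishing the fundamental estimate in the Orlicz setting. Classical proofs exploit pointwise convexity estimates tailored to polynomial growth $|\cdot|^p$; here one must replace $|\cdot|^p$ by $B(|\cdot|)$ and use the convexity of $B$ together with the $\Delta_2$ condition on $B$ (and its consequences for $\widetilde B$) to absorb the lower-order error terms produced by derivatives of cut-off functions in the gluing construction. Once this estimate is available, the remaining De Giorgi--Letta-type arguments and the passage to the limit proceed exactly as in \cite{DM} or in the polynomial-growth counterpart \cite{Elvira 1}.
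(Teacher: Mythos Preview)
The paper does not actually prove this proposition: it explicitly omits the proof, stating that it is ``entirely similar to their counterparts in the classical Sobolev setting'' and referring to \cite[Appendix]{Elvira 1} and \cite[Lemma 3.3]{CRZ}. Your outline is precisely the standard De Giorgi compactness scheme used in those references (countable dense family $\mathcal{R}$ of open sets, $\Gamma$-compactness on the separable metric space $L^B(D)$, diagonal extraction, and a fundamental estimate via cut-off gluing with the $\Delta_2$ condition controlling the error terms), so your approach coincides with the one the paper implicitly invokes.
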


\begin{proposition}
	\label{boundaryprop}
Let $(F_{\varepsilon })_\varepsilon:W^{1,B}\left( \Omega ;%
\mathbb{R}
^{m}\right) \times \mathcal{A}\left( \Omega \right) \rightarrow \left[
0,+\infty \right)$ be a sequence of functionals satisfying $(i)-(iii)$ of Proposition \ref{Gammaextract}. Then for any $u \in W^1L^B(\Omega;\mathbb R^d)$ and
	$A \in \mathcal A(\Omega)$, it results that
	\begin{align*}
	\mathcal{F}_{\left\{ \varepsilon\right\} }\left( u,A\right) :=\inf \left\{ 
	\underset{\varepsilon \rightarrow 0 }{\lim \inf }F_{\varepsilon}\left(
	u_\varepsilon,A\right) :u_\varepsilon\rightarrow u\text{ in }L^{B}\left( A\right) \right\}=
	\mathcal{F}^0_{\left\{ \varepsilon\right\} }\left( u,A\right) :=
	\\
\inf \left\{ 
	\underset{\varepsilon \to 0}{\lim \inf }F_{\varepsilon}\left(
	u_\varepsilon,A\right) :u_{\varepsilon}\rightarrow u\text{ in }L^{B}\left( A\right), u_n \equiv u \hbox{ on a neighborhood of }\partial A \right\}.
\end{align*}

\end{proposition}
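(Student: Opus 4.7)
The inequality $\mathcal F_{\{\varepsilon\}}(u,A) \le \mathcal F^0_{\{\varepsilon\}}(u,A)$ is immediate since the class of competitors defining $\mathcal F^0$ is contained in that defining $\mathcal F$. The plan is to establish the reverse inequality by the classical De Giorgi slicing/cut-off argument, making essential use of the Radon measure property (i), the locality (ii), the two-sided growth (iii), and the $\Delta_2$ condition on $B$.

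Fix $u \in W^1L^B(\Omega;\mathbb R^d)$ and $A \in \mathcal A(\Omega)$, and let $u_\varepsilon \to u$ in $L^B(A)$ realise, up to a subsequence, the value $\mathcal F_{\{\varepsilon\}}(u,A)$, which we may assume finite. For $\eta>0$ small and an integer $k\ge 1$ I would choose nested open sets $A_0 \subset\subset A_1 \subset\subset \cdots \subset\subset A_k \subset\subset A$ so that $A \setminus A_0$ is a tubular neighborhood of $\partial A$ of width at most $2\eta$ and each annulus $T_i := A_{i+1}\setminus \overline{A_i}$ has width of order $\eta/k$, together with cutoffs $\varphi_i \in \mathcal C_c^\infty(A_{i+1})$ with $\varphi_i \equiv 1$ on $A_i$ and $\|D\varphi_i\|_\infty \le C_k := Ck/\eta$. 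Setting
\begin{equation*}
w^{(i)}_\varepsilon := \varphi_i u_\varepsilon + (1-\varphi_i) u,
\end{equation*}
each $w^{(i)}_\varepsilon$ coincides with $u$ on the neighborhood $A \setminus \overline{A_{i+1}}$ of $\partial A$ and satisfies $w^{(i)}_\varepsilon \to u$ in $L^B(A)$. Using (i) and (ii),
\begin{equation*}
F_\varepsilon(w^{(i)}_\varepsilon,A) = F_\varepsilon(u_\varepsilon, A_i) + F_\varepsilon(w^{(i)}_\varepsilon, T_i) + F_\varepsilon(u, A \setminus A_{i+1}),
\end{equation*}
the first term being bounded by $F_\varepsilon(u_\varepsilon,A)$ and the third by $C\int_{A\setminus A_0}(1+B(|Du|))dx$, which is small for small $\eta$ by absolute continuity of the integral. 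On $T_i$, the identity $Dw^{(i)}_\varepsilon = \varphi_i Du_\varepsilon + (1-\varphi_i)Du + D\varphi_i \otimes (u_\varepsilon-u)$ combined with the upper growth in (iii), convexity of $B$ and $\Delta_2$ yields
\begin{equation*}
F_\varepsilon(w^{(i)}_\varepsilon, T_i) \le C \int_{T_i}\bigl(1 + B(|Du_\varepsilon|) + B(|Du|) + B(C_k|u_\varepsilon-u|)\bigr)dx.
\end{equation*}

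Summing these annular contributions over $i=0,\ldots,k-1$, using disjointness of the $T_i$ and the lower bound in (iii) to absorb $\int_A B(|Du_\varepsilon|)dx \le C\,F_\varepsilon(u_\varepsilon,A)$, the pigeonhole principle produces an index $i_\varepsilon$ for which $F_\varepsilon(w^{(i_\varepsilon)}_\varepsilon, T_{i_\varepsilon})$ is at most $C/k$ times a quantity uniformly bounded in $\varepsilon$ plus $\int_A B(C_k|u_\varepsilon-u|)dx$. Setting $\tilde u_\varepsilon := w^{(i_\varepsilon)}_\varepsilon$, which is admissible for $\mathcal F^0_{\{\varepsilon\}}(u,A)$, taking $\liminf_\varepsilon$, then $k\to\infty$, then $\eta\to 0$ (with a standard diagonal extraction) gives $\mathcal F^0_{\{\varepsilon\}}(u,A) \le \mathcal F_{\{\varepsilon\}}(u,A)$. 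The only nontrivial step is showing that $\int_A B(C_k|u_\varepsilon-u|)dx \to 0$ as $\varepsilon \to 0$ for each fixed $k$, and this is the main obstacle: in Orlicz spaces norm convergence does not automatically imply modular convergence, but the $\Delta_2$ assumption on $B$ ensures precisely their equivalence, so that $u_\varepsilon \to u$ in $L^B(A)$ forces $\int_A B(\lambda|u_\varepsilon-u|)dx \to 0$ for every fixed $\lambda>0$; the remaining steps are then routine.
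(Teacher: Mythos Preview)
Your proposal is correct and follows precisely the classical De Giorgi slicing/cut-off argument that the paper has in mind: the paper omits the proof entirely, stating only that it is ``entirely similar to their counterparts in the classical Sobolev setting (see \cite[Appendix]{Elvira 1} or \cite[Lemma 3.3]{CRZ} for similar arguments),'' and those references carry out exactly the layered cut-off and averaging argument you describe. Your identification of the one genuinely Orlicz-specific point---that the $\Delta_2$ condition on $B$ is what converts norm convergence $u_\varepsilon\to u$ in $L^B$ into modular convergence $\int_A B(\lambda|u_\varepsilon-u|)\,dx\to 0$ for every fixed $\lambda$---is accurate and is the only adaptation needed beyond the $L^p$ case.
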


Now we recall the following Aumann's measurability selection principle.
\begin{proposition}\label{Aumannselection}
Let $\left( X,\mathcal{M}\right) $ be a measurable
space with $\mu $ a positive, finite and complete measure, and let $S$ be
complete, separable metric space. Let $H:X\rightarrow \left\{ C\subset
S:C\neq \emptyset ,C \hbox{ is closed}\right\} $ be a multifunction such that 
$\left\{ \left( x,y\right) \in X\times S:y\in H\left( x\right) \right\} $ $ \in 
\mathcal{M}\times \mathcal B \left( S\right)$. Then there exists a sequence of measurable
functions $h_{n}:X\rightarrow S$ such that  $H\left( x\right) =\left\{
h_{n}\left( x\right) :n\in 
\mathbb{N}
\right\} $\ for $\mu .a.e$ $x\in X.$
\end{proposition}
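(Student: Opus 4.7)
\textbf{Proof plan for Proposition \ref{Aumannselection} (Castaing-type representation).}

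The plan is to follow the classical Castaing representation strategy, exploiting the separability of $S$ to reduce the construction to countably many single-valued measurable selections from auxiliary multifunctions. First, I would fix a countable base $\{U_n\}_{n\in\mathbb{N}}$ of the topology of $S$; such a base exists because $S$ is a separable metric space. Next, I would use the hypothesis that $\mathrm{Graph}(H)=\{(x,y):y\in H(x)\}$ belongs to $\mathcal{M}\otimes\mathcal{B}(S)$, together with the completeness and finiteness of $\mu$, to invoke the measurable projection theorem, which gives that for every $B\in\mathcal{M}\otimes\mathcal{B}(S)$ the projection $\pi_X(B)$ is in $\mathcal{M}$. Applied to the sets $\mathrm{Graph}(H)\cap(X\times U_n)$ this yields that
\[
A_n:=\{x\in X:H(x)\cap U_n\neq\emptyset\}\in\mathcal{M},\qquad n\in\mathbb{N}.
\]

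Second, I would produce a single measurable selection $h_0$ of $H$ by applying von Neumann--Aumann's selection theorem: since $\mathrm{Graph}(H)$ is in $\mathcal{M}\otimes\mathcal{B}(S)$ and $(X,\mathcal{M},\mu)$ is complete, there exists an $\mathcal{M}$-measurable $h_0:X\to S$ with $h_0(x)\in H(x)$ for $\mu$-a.e.\ $x$. Then, for each $n\in\mathbb{N}$ I would apply the same selection theorem to the restricted multifunction
\[
H_n(x):=\begin{cases}\overline{H(x)\cap U_n}, & x\in A_n,\\ H(x), & x\in X\setminus A_n,\end{cases}
\]
whose graph is readily seen to lie in $\mathcal{M}\otimes\mathcal{B}(S)$ (the part on $A_n\times S$ is $\mathrm{Graph}(H)\cap(A_n\times \overline{U_n})$, the part on $(X\setminus A_n)\times S$ is $\mathrm{Graph}(H)\cap((X\setminus A_n)\times S)$). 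This yields measurable $h_n:X\to S$ with $h_n(x)\in H(x)$ everywhere and $h_n(x)\in \overline{U_n}$ on $A_n$; by shrinking $U_n$ slightly (using the base to separate points from complements of neighborhoods) one may in fact arrange $h_n(x)\in U_n$ on $A_n$.

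Third, I would verify the representation formula $H(x)=\overline{\{h_n(x):n\in\mathbb{N}\}}$ for $\mu$-a.e.\ $x$. The inclusion $\supset$ is automatic since $H(x)$ is closed and each $h_n(x)\in H(x)$. For the reverse inclusion, fix $y\in H(x)$ and any open neighborhood $V$ of $y$; by the base property there is $U_n\subset V$ with $y\in U_n$, so $H(x)\cap U_n\ne\emptyset$, i.e.\ $x\in A_n$, and therefore $h_n(x)\in U_n\subset V$. This shows that $\{h_n(x):n\in\mathbb{N}\}$ is dense in $H(x)$, proving the claim.

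The main technical obstacle is the projection step together with the single-valued selection in the second paragraph: both rely crucially on the completeness of the measure $\mu$ and on the fact that measurability of $H$ is phrased here through the product $\sigma$-algebra of the graph. Once the measurable projection theorem and the von Neumann--Aumann selection theorem are in hand, the construction of the countable family $\{h_n\}$ and the density argument in the third step are essentially bookkeeping through the countable base of $S$.
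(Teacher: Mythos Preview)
The paper does not prove this proposition at all; it is merely \emph{recalled} in the preliminaries as a known measurable selection principle (the Castaing representation theorem) and then applied in the proof of Lemma~\ref{lemmaub}. So there is nothing to compare against: your proposal supplies a proof where the paper offers none.

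Your overall strategy is the standard one and is correct in outline. One small technical slip: the graph of $x\mapsto \overline{H(x)\cap U_n}$ on $A_n$ is \emph{not} in general equal to $\mathrm{Graph}(H)\cap(A_n\times\overline{U_n})$; the latter is the graph of $x\mapsto H(x)\cap\overline{U_n}$, and the inclusion $\overline{H(x)\cap U_n}\subset H(x)\cap\overline{U_n}$ may be strict. The usual remedy is to replace the open base $\{U_n\}$ by the countable family of \emph{closed} balls $\overline{B}(s_k,1/m)$ with $\{s_k\}$ dense in $S$ and $m\in\mathbb{N}$: then $H_{k,m}(x):=H(x)\cap\overline{B}(s_k,1/m)$ is already closed-valued, its graph is $\mathrm{Graph}(H)\cap\bigl(X\times\overline{B}(s_k,1/m)\bigr)\in\mathcal{M}\otimes\mathcal{B}(S)$, and the density argument in your third step goes through verbatim. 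With this adjustment your argument is complete; the ``shrinking $U_n$ slightly'' device you mention is then unnecessary.
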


%
%

\section{Proof of Main results}\label{mainproof}
This section is devoted to the proof of our main results and it extends to the Orlicz-Sobolev setting, the arguments in \cite{Elvira 1}. Thus we do not give all the details, but we present just the proofs which involve different techniques and estimates.

We start recalling that, within this section $\Omega \subset \mathbb R^N$ is a bounded open set with Lipschitz boundary.

The proof of our first result is a consequence of the analogous theorem in \cite{Elvira 1} and the assumption $(H)$, i.e. that the $N$-function $B$ satisfies the assumption that there exists $p, q>1$ such that 
$$
(H)\;\;\;\; L^q(\Omega) \hookrightarrow L^B(\Omega) \hookrightarrow L^p(\Omega).
$$
 
On the other hand, this assumption is satisfied by any $N$- function $B$ such that $\triangle_2$ condition holds both for $B$ and for its conjugate. Indeed it suffices to apply \cite[Proposition 2.4]{DG} (see also \cite[Proposition
3.5]{CM}) and  standard rearrangements' arguments, which allow to consider any dimension $N$.
\begin{proof}[Proof of Theorem \ref{1}]
	We start recalling the following property which will be used in the sequel:ì since $\Omega$ has Lipschitz boundary, it is well known that if $v \in L^1(\Omega)$ and its distributional gradient $\nabla v \in  L^{B}\left( \Omega \right)$, then $v \in W^1L^{B}\left( \Omega \right)$. Clearly the same properties are shared by vector valued fields.
	
Let assume that $\left( u_{\varepsilon }\right) _{\varepsilon}$ is
bounded in $W^{2}L^{B}\left( \Omega;\mathbb R^D \right)$, with $B$ satisfying $\triangle
_{2}$ and $(H)$. 
Hence $\left\Vert u_{\varepsilon }\right\Vert
_{W^{2,p}}\leq c\left\Vert u_{\varepsilon }\right\Vert _{W^2L^B}$ and $\left(
u_{\varepsilon }\right)_{\varepsilon
} $ bounded in $W^2L^{B}$ implies $\left( u_{\varepsilon
}\right)_{\varepsilon
} $ bounded in $W^{2, p}$ where the counterparts of (i)-(iv) in the Sobolev setting are known, see \cite{Elvira 1}.
Moreover since $\left( u_{\varepsilon }\right)_\varepsilon $ is bounded in $W^{2}L^{B}\left(
\Omega;\mathbb R^d \right)$ we have that for every $j,\left( \frac{\partial
u_{\varepsilon }}{\partial x_{j}}\right)_\varepsilon $ is bounded in $W^{1}L^{B}\left(
\Omega \right) $. 
Thus, $u_{\varepsilon }\rightharpoonup u_{0}$
in $W^{2}L^{B}\left( \Omega;\mathbb R^d \right) $ weakly, and, by Proposition \ref{compactnessW1LB}, and Remark \ref{Cianchi} 
\begin{align*}
\frac{\partial u_{\varepsilon }}{\partial x_{j}} \overset{reit-2s}{\rightharpoonup } \frac{%
\partial u_{0}}{\partial x_{j}}+\frac{\partial u_{0}^{1}}{\partial y_{j}}+%
\frac{\partial u_{0}^{2}}{\partial z_{j}},
\end{align*}
as $\varepsilon \to 0$, with
$u_{0}^{1}\in L^B\left( \Omega
;W_{\#}^{1}L^{B}\left( Y;\mathbb R^d\right) \right) ,
u_{0}^{2} \in L^B\left( \Omega ;L_{per}^B\left(
Y;W_{\#}^{1}L^{B}\left( Z;\mathbb R^d\right) \right) \right).$
On the other hand the strong convergence of $u_\varepsilon \to u_0$ in $W^1L^{B}\left( \Omega;\mathbb R^d \right) $, and the bounds on the hessians, together with proposition \ref{compactnessW1LB}, applied to $(Du_\varepsilon)_\varepsilon$, entail that $u^1_0$ and $u^2_0=0$.
Analogously, since $\left( \frac{\partial u_{\varepsilon }}{\partial x_{j}}\right)_\varepsilon $ is
bounded in $W^{1}L^{B}\left( \Omega;\mathbb R^d \right) ,\frac{\partial u_{\varepsilon }%
}{\partial x_{j}}\rightharpoonup  p_{0}$ in $W^{1}L^{B}\left( \Omega;\mathbb R^d \right) $
weakly,%
\begin{align*}
\frac{\partial ^{2}u_{\varepsilon }}{\partial x_{i}\partial x_{j}}
\overset{reit-2s}{\rightharpoonup } \frac{\partial p_{0}}{\partial x_{i}}+\frac{\partial p_{0}^{1}%
}{\partial y_{i}}+\frac{\partial p_{0}^{2}}{\partial z_{i}},
\end{align*}
with $p_{0}^{1}\in
L^B\left( \Omega ;W_{\#}^{1}L^{B}\left( Y;\mathbb R^d\right) \right),
p_{0}^{2} \in L^B\left( \Omega ;L_{per}^B\left(
Y;W_{\#}^{1}L^{B}\left( Z;\mathbb R^d\right) \right) \right).$

Consequently $p_{0}=\frac{\partial u_{0}}{\partial
x_{j}}$ then $\frac{\partial p_{0}}{\partial x_{i}}=\frac{\partial ^{2}u_0}{%
\partial x_{i}\partial x_{j}}\in L^{B}\left( \Omega;\mathbb R^d \right) $ and $u_{0}\in
W^{2}L^{B}\left( \Omega;\mathbb R^d \right) .$

Since $L^{B}\left( \Omega;\mathbb R^d \right) \subset L^{p}\left( \Omega;\mathbb R^d \right)$ and
$L^{p^{\prime }}\left( \Omega;\mathbb R^d \right) \subset L^{\widetilde{B}}\left( \Omega;\mathbb R^d
\right) $ ($1/{p'}+1/p=1$), then, by the uniqueness of distributional limits, taking the distributional derivatives and applying \cite[Theorem 1.10]{Elvira 1}, it results 
	\begin{align*}
\frac{\partial p_{0}}{\partial x_{i}}+\frac{\partial
p_{0}^{1}}{\partial y_{i}}+\frac{\partial p_{0}^{2}}{\partial z_{i}}=\\
\frac{%
\partial ^{2}u_{0}}{\partial x_{i}\partial x_{j}}+\frac{\partial ^{2}U}{%
\partial y_{i}\partial y_{j}}+\frac{\partial ^{2}W}{\partial z_{i}\partial
z_{j}},
\end{align*}
 thus
 \begin{align*}
 \frac{\partial p_{0}^{1}}{\partial y_{i}}+\frac{\partial
p_{0}^{2}}{\partial z_{i}}
=\frac{\partial ^{2}U}{\partial y_{i}\partial y_{j}%
}+\frac{\partial ^{2}W}{\partial z_{i}\partial z_{j}}. 
\end{align*}
Averaging over $z_i$,  we deduce 
 \begin{align}\label{p0U}
 \frac{\partial p_{0}^{1}}{\partial y_{i}}=\frac{\partial ^{2}U%
}{\partial y_{i}\partial y_{j}},
\end{align}
and consequently 
\begin{align*}\frac{\partial p_{0}^{2}}{\partial
z_{i}}=\frac{\partial ^{2}W}{\partial z_{i}\partial z_{j}},
\end{align*}
From \eqref{p0U}
and $p_{0}^{1}\in L^B\left( \Omega ;W_{\#}^{1}L^{B}\left( Y;\mathbb R^d\right)
\right) $, we have for a.e $x\in \Omega ,p_{0}^{1}\left( x,\cdot\right) \in
W_{\#}^{1}L^{B}\left( Y;\mathbb R^d \right) ,$ then $\frac{\partial p_{0}^{1}}{\partial
y_{i}}\left( x,\cdot\right) \in L^{B}\left( Y;\mathbb R^d\right) \subset L^{p}\left(
Y;\mathbb R^d \right),$ that is $\frac{\partial }{\partial
y_{i}}\left( \frac{\partial U}{\partial y_{j}}\right)(x,\cdot) =\frac{\partial p_0^{1}}{%
\partial y_{i}}(x,\cdot)\in L^{B}\left( Y;\mathbb R^d\right) .$ Moreover, $U\left(
x,\cdot\right) \in W_{\#}^{2,p}\left( Y;\mathbb R^d\right)$ entails that  $\frac{\partial
U\left( x,\cdot\right) }{\partial y_{j}}\in W^{1,p}\left( Y;\mathbb R^d\right).$ Thus, we deduce that $\frac{\partial U\left( x,\cdot\right) }{%
\partial y_{j}}\in W^{1}L^{B}\left( Y;\mathbb R^d\right) .$ Thus $\frac{\partial U\left(
x,\cdot\right) }{\partial y_{j}}\in L^{B}\left( Y;\mathbb R^d\right) $ and $U\left(
x,\cdot\right) \in L^{p}\left( Y;\mathbb R^d\right).$ 
That is 
$U\left( x,\cdot\right) \in W^{1}L^{B}\left( Y;\mathbb R^d\right) ,\frac{\partial U\left(
x,\cdot\right) }{\partial y_{j}}\in W^{1}L^{B}\left( Y;\mathbb R^d\right) $ hence $U\left(
x,\cdot\right) \in W^{2}L^{B}\left( Y;\mathbb R^d\right) $ and we have $U\in L^{p}\left(
\Omega ;W^{2}L^{B}\left( Y;
\mathbb{R}
^{d}\right) \right) \subset L^{1}\left( \Omega ;W^{2}L^{B}\left( Y;%
\mathbb{R}
^{d}\right) \right).$ 
Arguing as in Remark \ref{Cianchi}, we can actually prove that $U \in L^B\left( \Omega ;W^{2}L^{B}\left( Y;\mathbb{R}^{d}\right) \right).$

On the other hand, \cite[Theorem 1.7]{Elvira 1} guarantees the existence of a field $A\left( x\right) \in L^{p}\left( \Omega ;
\mathbb{R}
^{d\times N}\right) \subset L^{1}\left( \Omega ;\mathbb R
^{d\times N}\right),$  such that $U(x,y)- A(x)y \in L^p(\Omega;W^{2,p}_{per}(Y; \mathbb R^d))$, hence, by the previous observations,  $U(x,y)- A(x)y \in L^1(\Omega;W^{2}L^B_{per}(Y; \mathbb R^d))$. In a similar way, 
\begin{align*}
\frac{\partial p_{0}^{2}}{\partial
z_{i}}=\frac{\partial ^{2}W}{\partial z_{i}\partial z_{j}}=\frac{\partial }{%
\partial z_{i}}\left( \frac{\partial W}{\partial z_{j}}\right), 
\end{align*}
with $p_{0}^{2}\in
L^{1}\left( \Omega ;L_{per}^{1}\left( Y;W_{\#}^{1}L^{B}\left( Z;\mathbb R^d\right)
\right) \right)$.  
 For a.e. $x,y \in \Omega \times Y, \; W\left( x,y,\cdot\right) \in W^{2,p}\left( Z;
\mathbb{R}
^{d}\right)$, thus $\frac{\partial W}{\partial z_{j}}\in L^{1}\left(
Z;\mathbb R^d \right) .$ Then $\frac{\partial }{\partial z_{i}}\left( \frac{\partial W}{%
\partial z_{j}}\right) \in L_{per}^{B}\left( Z;\mathbb R^d\right) $ implies $\frac{%
\partial W}{\partial z_{j}}\in W_{\#}^{1}L^{B}\left( Z;\mathbb R^d\right) $. Moreover, since $%
W\left( x,y,\cdot\right) \in W^{2,p}\left( Z;
\mathbb{R}
^{d}\right) \subset L^{p}\left( Z;
\mathbb{R}
^{d}\right) \subset L^{1}\left( Z;
\mathbb{R}
^{d}\right) $ we deduce $W\left( x,y,\cdot\right) \in W_{\#}^{2}L^{B}\left(
Z;\mathbb R^d\right) .$
Then, the existence of a field $C$ such that 
\begin{align*}
W(x,y,z)-C(x,y)z\in L^{1}(\Omega \times Y;W^2L_{per}^B(Z;\mathbb R^d)),
\end{align*} for some
\noindent $C\in L^{1}\left(\Omega \times Y;
\mathbb R
^{d\times N}\right),$ can be deduced arguing as above relying on the analogous property proven in \cite[Theorem 1.10]{Elvira 1}. 
\end{proof}

\subsection{Proof of Theorem 1.2.}

The result is achieved adopting the same strategy as in \cite{Elvira 1},  i.e. by means of the followings lemmas. The first one deals with the continuity of $f_{hom}$.

\begin{lemma}\label{fhomcontinuous}
Under the hypotheses $\left( A_{1}\right) ,\left( A_{2}\right)
,\left( H_{3}\right) $ and $\left( H_{4}\right)$, the function $f_{\hom }:\Omega \times 
\mathbb{R}
^{d\times N}\rightarrow \left[ 0,+\infty \right[ $, defined by \eqref{fshom} is continous.
\end{lemma}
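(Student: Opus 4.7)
The plan is to prove continuity of $f_{\hom}$ in two stages: (i) show convexity of $f_{\hom}(y,\cdot)$ on $\mathbb{R}^{d\times N}$ with growth constants uniform in $y$, which yields local Lipschitz continuity in $\xi$ with modulus independent of $y$; (ii) prove the pointwise continuity $f_{\hom}(y_n,\xi)\to f_{\hom}(y,\xi)$ whenever $y_n\to y$. The two facts together imply joint continuity on $\Omega\times\mathbb{R}^{d\times N}$ by a standard Arzel\`a-type argument.

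For step (i), fix $y$. For each $\psi\in W^1L^B_{per}(Z;\mathbb{R}^d)$ the map $\xi\mapsto\int_Z f(y,z,\xi+D\psi(z))\,dz$ is convex by $(H_3)$, hence so is its infimum $f_{\hom}(y,\cdot)$. Taking $\psi\equiv 0$ in $(H_4)$ gives $f_{\hom}(y,\xi)\le c_2(1+B(|\xi|))$, while the convexity of $B(|\cdot|)$ together with $\int_Z D\psi\,dz=0$ (Jensen's inequality) yields $f_{\hom}(y,\xi)\ge c_1 B(|\xi|)$. Thus $f_{\hom}(y,\cdot)$ is a finite-valued convex function trapped between two $y$-independent bounds, hence locally Lipschitz with modulus depending only on the growth constants.

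For step (ii), fix $\xi$ and $y_n\to y$. The $\limsup$ inequality follows by choosing, for any $\eta>0$, an $\eta$-optimal $\psi$ for $f_{\hom}(y,\xi)$, using it as a competitor for $f_{\hom}(y_n,\xi)$, applying $(A_1)$ for pointwise convergence, and dominating the integrand by $c_2(1+B(|\xi+D\psi(z)|))\in L^1(Z)$ via $(H_4)$, so that Lebesgue's theorem gives $\limsup_n f_{\hom}(y_n,\xi)\le f_{\hom}(y,\xi)+\eta$. The main obstacle is the $\liminf$ inequality. Pick $\psi_n\in W^1L^B_{per}(Z;\mathbb{R}^d)$ with zero mean over $Z$ and $\int_Z f(y_n,z,\xi+D\psi_n(z))\,dz\le f_{\hom}(y_n,\xi)+1/n$. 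The lower bound in $(H_4)$ and the $y$-uniform upper bound from (i) give $c_1\int_Z B(|\xi+D\psi_n|)\,dz\le c_2(1+B(|\xi|))+1/n$; combined with $\Delta_2$, the Poincar\'e-Wirtinger inequality in $L^B(Z)$ (Remark \ref{Cianchi}) and reflexivity of $L^B(Z)$, this yields along a (not relabelled) subsequence $D\psi_n\rightharpoonup D\psi$ weakly in $L^B(Z;\mathbb{R}^{d\times N})$ for some $\psi\in W^1L^B_{per}(Z;\mathbb{R}^d)$. Applying Proposition \ref{Ioffethm} on $Z$ with $u_n(z):=y_n\to y$ strongly in $L^B(Z;\mathbb{R}^N)$ and $v_n(z):=\xi+D\psi_n(z)\rightharpoonup\xi+D\psi(z)$ weakly in $L^B(Z;\mathbb{R}^{d\times N})$, one deduces
\begin{equation*}
f_{\hom}(y,\xi)\le\int_Z f(y,z,\xi+D\psi(z))\,dz\le\liminf_{n}\int_Z f(y_n,z,\xi+D\psi_n(z))\,dz\le\liminf_n f_{\hom}(y_n,\xi).
\end{equation*}

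The delicate point is verifying Ioffe's hypotheses: measurability in $z$ comes from $(A_2)$, continuity of $f(\cdot,z,\lambda)$ from $(A_1)$, and convexity of $f(y,z,\cdot)$ from $(H_3)$; convexity together with the uniform two-sided growth $(H_4)$ forces $(y,\lambda)\mapsto f(y,z,\lambda)$ to be jointly lower semicontinuous (in fact jointly continuous) for a.e.~$z$, which is exactly what Proposition \ref{Ioffethm} requires. Finally, combining the $y$-uniform local Lipschitz modulus in $\xi$ from (i) with the pointwise continuity in $y$ from (ii) yields joint continuity of $f_{\hom}$ on $\Omega\times\mathbb{R}^{d\times N}$.
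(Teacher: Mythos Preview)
Your argument is correct in its conclusions, but it takes a different route from the paper. The paper proves joint continuity \emph{directly}: given a sequence $(x_n,\xi_n)\to(x,\xi)$, it establishes the $\limsup$ inequality by testing with a fixed near-optimal $\psi$ for $f_{\hom}(x,\xi)$ and passing to the limit via dominated convergence, and the $\liminf$ inequality by extracting a weakly convergent subsequence from near-optimal $\psi_n$ for $f_{\hom}(x_n,\xi_n)$ and invoking Ioffe's theorem (Proposition~\ref{Ioffethm}) with both $x_n$ and $\xi_n$ varying. Your strategy instead decouples the variables: convexity and the $y$-independent two-sided growth $(H_4)$ supply a local Lipschitz modulus in $\xi$ that is uniform in $y$, and then you prove continuity in $y$ for fixed $\xi$ using exactly the same two-step (dominated convergence / Ioffe) mechanism as the paper. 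The decoupled version is slightly more structured---the uniform Lipschitz estimate could be of independent use---while the paper's direct argument is shorter and avoids the detour through convexity of $f_{\hom}(y,\cdot)$.

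One small slip to correct: you write that since each $\xi\mapsto\int_Z f(y,z,\xi+D\psi(z))\,dz$ is convex, ``hence so is its infimum $f_{\hom}(y,\cdot)$''. As stated this is false---the pointwise infimum of convex functions is in general concave. What makes the conclusion true here is that $f_{\hom}(y,\xi)$ is an \emph{inf-projection}: the map $(\xi,\psi)\mapsto\int_Z f(y,z,\xi+D\psi(z))\,dz$ is jointly convex in $(\xi,\psi)$ (because $f(y,z,\cdot)$ is convex and $(\xi,\psi)\mapsto\xi+D\psi$ is affine), and the partial infimum of a jointly convex function over one variable is convex in the remaining one. Replace the sentence accordingly and the rest of step~(i) goes through.
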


\begin{proof}
The proof develops along the lines of \cite[Lemma 4.1]{Elvira 1} and we present the main differences for the readers' convenience. For the sake of exposition, we recall that $f_{\hom }\left( x,\xi \right) :=\inf \left\{ \int_{Q}f\left( x,y,\xi
+D\psi \left( y\right) \right) dy:\psi \in W_{per}^{1}L^{B}\left( Q;%
\mathbb{R}
^{d}\right) \right\} .$

\noindent Fix $\left( x,\xi \right) \in \Omega \times 
\mathbb{R}
^{d\times N}$ and consider a sequence $\left( x_{n},\xi _{n}\right) $
converging to $\left( x,\xi \right) .$ Let $\varepsilon >0$ and choose $\psi
\in W^{1}L^{B}_{per}\left( Q;%
\mathbb{R}
^{d}\right) $ such that $f_{\hom }\left( x,\xi \right) +\varepsilon
>\int_{Q}f\left( x,y,\xi +D\psi \left( y\right) \right) dy$. 

On the other hand, definition \eqref{fshom} entails that $$f_{\hom }\left( x_{n},\xi
_{n}\right) \leq \int_{Q}f\left( x_{n},y,\xi _{n}+D\psi \left( y\right)
\right) dy.$$ 
$(H_4)$ and dominated convergence theorem guarantee that
\begin{align*}
\underset{n\rightarrow +\infty }{\limsup} f_{\hom
}\left( x_{n},\xi _{n}\right) \leq \underset{n\rightarrow +\infty }{\lim
	\sup }\int_{Q}f\left( x_{n},y,\xi _{n}+D\psi \left( y\right) \right) dy\\
=\underset{n\rightarrow +\infty }{\lim} \int_{Q}f\left( x_{n},y,\xi
_{n}+D\psi \left( y\right) \right) dy=\int_{Q}f\left( x,y,\xi +D\psi \left( y\right) \right) dy \\
\leq f_{\hom }\left( x,\xi \right) +\varepsilon .
\end{align*}

Conversely, let $\psi _{n}\in W_{per}^{1}L^{B}\left( Q;%
\mathbb{R}
^{d}\right) $ be such that 
\begin{align*}
f_{\hom }\left( x_{n},\xi _{n}\right)
+\varepsilon >\int_{Q}f\left( x_{n},y,\xi _{n}+D\psi _{n}\left( y\right)
\right) dy \geq \frac{1}{C}\int_{Q}B\left(
\left\vert \xi _{n}+D\psi _{n}\left( y\right) \right\vert \right) dy.
\end{align*}
 
Let $\psi _{1}\in $ $W_{per}^{1}L^{B}\left( Q;%
\mathbb{R}
^{d}\right) $ be chosen arbitrarily. Since
\begin{align*}
f_{\hom }\left( x_{n},\xi
_{n}\right) \leq \int_{Q}f\left( x_{n},y,\xi _{n}+D\psi _{1}\left( y\right)
\right) dy\leq C\int_{Q}\left( 1+B\left( \left\vert \xi _{n}+D\psi
_{1}\left( y\right) \right\vert \right) \right) dy\leq \\
C+C\int_{Q}B\left( \left\vert \xi _{n}\right\vert +\left\vert D\psi
_{1}\left( y\right) \right\vert \right) dy\leq C+\frac{1}{2}C\int_{Q}B\left(
2\left\vert \xi _{n}\right\vert \right) dy+\frac{1}{2}C\int_{Q}B\left(
\left\vert 2D\psi _{1}\left( y\right) \right\vert \right) dy\\
\leq C_1 < +\infty.
\end{align*}

 Hence, 
 \begin{align*}
 C_{1}>f_{\hom }\left( x_{n},\xi _{n}\right)
+\varepsilon \geq \int_{Q}f\left( x_{n},y,\xi _{n}+D\psi _{n}\left( y\right)
\right) dy\geq
\frac{1}{C_{2}}\int_{Q}B\left( \left\vert \xi _{n}+D\psi
_{n}\left( y\right) \right\vert \right) dy. 
\end{align*}
Thus by $(H_4)$, we deduce $\frac{1}{C_{1}C_{2}}%
\int_{Q}B\left( \left\vert \xi _{n}+D\psi _{n}\left( y\right) \right\vert
\right) dy<1.$



\noindent Recalling that $B$ is convex and $B\left( 0\right) =0$, it is easily seen that
$\int_{Q}B\left( \frac{\left\vert \xi _{n}+D\psi _{n}\left( y\right)
\right\vert }{1+C_{1}C_{2}}\right) dy<1$. Thus, exploiting the definition of $L^B$ norm, the triangle inequality and the convergence of $\xi_n$ to $\xi$, we have that 
\begin{align*}\left\Vert D\psi
_{n}\left( y\right) \right\Vert _{B,Q}
\leq \left\Vert \xi _{n}+D\psi _{n}\left( y\right) \right\Vert
_{B,Q}+\left\Vert -\xi _{n}\right\Vert _{B,Q}\leq C,
\end{align*} 
From Poincar\'{e}-Wirtinger's in\'{e}quality, the fact that $B$ satisfies $\triangle_2$ condition, hence 
is reflexive, it results that, up to a not relabelled subsequence, $\psi
_{n}-\int_Q \psi _{n}dy\rightharpoonup \psi $ in $W_{per}^{1}L^{B}\left( Q;%
\mathbb{R}
^{d}\right) ;$ thus $D\psi _{n}\rightharpoonup D\psi $ in $L^{B}\left( Q;%
\mathbb{R}
^{d}\right) .$ In view of $\left( A_{1}\right) ,\left( A_{2}\right) $ and by
 theorem \ref{Ioffethm}, we get that:
 \begin{align*}
f_{\hom }\left( x,\xi \right) \leq \int_{Q}f\left( x,y,\xi
+D\psi \left( y\right) \right) dy
\leq\underset{n\rightarrow +\infty }{\liminf }\int_{Q}f\left( x_{n},y,\xi
_{n}+D\psi _{n}\left( y\right) \right) dy \\
\leq \underset{n\rightarrow
+\infty }{\lim \inf }f_{\hom }\left( x_{n},\xi _{n}\right) +\varepsilon \leq 
\underset{n\rightarrow +\infty }{\lim \sup }f_{\hom }\left( x_{n},\xi
_{n}\right) +\varepsilon \leq f_{\hom }\left( x,\xi \right) +2\varepsilon,
\end{align*}
and this concludes the proof.
\end{proof}

\noindent Clearly, under the same assumptions, the above result holds for $\overline f_{\rm hom}$, $f^2_{\rm hom}$ and $\overline{f^2_{\rm hom}}$.

\smallskip

We are in position to introduce a localized version of our $\Gamma$-limit, i.e. we set for any sequence $( \varepsilon_n)_n $ of positive real numbers
converging to zero,
\begin{align}\label{subGamma}
\mathcal{F}_{\left\{ \varepsilon \right\}
}\left( u,D\right) :=\inf \left\{ \underset{n\rightarrow +\infty }{\lim \inf 
}F_{\varepsilon _{n}}\left( u,D\right) :u_{n}\rightarrow u\text{ in }%
L^{B}\left( \Omega ;\mathbb{R}
^{d}\right)\right\},
\end{align}
where, with an abuse of notation (cf. \eqref{Fepsilon}) we define for every $u \in L^B(\Omega;\mathbb R^d)$, and $D\in \mathcal{A}\left( \Omega \right),$
$$
F_{\varepsilon }\left( u,D\right)
:=\left\{\begin{array}{ll}
\int_{D}f\left( \frac{x}{\varepsilon },\frac{x}{\varepsilon ^{2}},Du\right)
dx,  &\hbox{ for every } u\in W^{1}L^{B}\left( \Omega ;
\mathbb{R}^{d}\right), 
\\
+\infty &\hbox{ otherwise.}
\end{array}
\right.$$
Observe that the coercivity condition $(H_4)$ on $f$ guarantees that \eqref{subGamma} is equivalent at the computing the analogous limit functional with respect to the weak*  convergence in $W^1L^B$.

Moreover, as in \cite{FNE reit}, we introduce for every $u \in W^{1}L^B(\Omega;\mathbb R^d)$,
\begin{align}\label{FGamma}
F\left( u,D\right) :=\\
\inf \left\{ 
\begin{array}{ll}
\displaystyle{\int_{D}\int_{Y}\int_{Z}}f\left( y,z,Du\left( x\right) +D_{y}U\left(
x,y\right) +D_{z}W\left( x,y,z\right) \right) dxdydz:\\ 
U\in L^{1}\left( D;W^1L^B_{per}\left( Y;\mathbb R^d\right) \right) ,W \in L^{1}\left(
\Omega \times Y;W^1L^B_{per}\left( Z;\mathbb R^d\right) \right) 
\end{array}%
\right\}. \nonumber
\end{align}

%
%
%
	Under the same assumptions of Proposition \ref{Gammaextract}, the following result can be proven.
	
	\begin{lemma}\label{lemma3.3}  For each $A\in \mathcal{A}\left( \Omega \right),$ let $(\varepsilon _{j}) $ be the sequence given by Proposition \ref{Gammaextract}, then there exists a futher subsequence $(\varepsilon
		_{j_{k}}) \equiv ( \varepsilon _{k}) $ such that $%
		\mathcal{F}_{\{\varepsilon _{k}\} }\left( u,\cdot \right) $ is the
		restriction to $\mathcal{A}\left( \Omega \right) $\ of a finite Radon
		measure.
	\end{lemma}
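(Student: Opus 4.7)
The plan is to apply a De Giorgi--Letta type criterion and identify $\mathcal{F}_{\{\varepsilon_k\}}(u,\cdot)$ with the restriction to $\mathcal{A}(\Omega)$ of a finite Radon measure, after extracting a further subsequence of $(\varepsilon_j)$ that handles a dense countable family of open sets simultaneously. Concretely, fix a countable family $\mathfrak{R}\subset\mathcal{A}(\Omega)$ consisting of finite unions of open subsets with Lipschitz boundary drawn from a countable basis of $\Omega$, arranged so that every $D\in\mathcal{A}(\Omega)$ can be written as an increasing union of elements of $\mathfrak{R}$ each relatively compact in $D$. A standard diagonal argument applied to Proposition~\ref{Gammaextract} produces a subsequence $(\varepsilon_k)$ for which $\mathcal{F}_{\{\varepsilon_k\}}(u,R)$ is the genuine $\Gamma(L^B(R))$-limit of $F_{\varepsilon_k}(u,R)$ for every $R\in\mathfrak{R}$.

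The next step is to verify the De Giorgi--Letta axioms. Monotonicity is immediate from the definition of $\mathcal{F}_{\{\varepsilon_k\}}$, while superadditivity on disjoint open sets follows by restricting a recovery sequence on $A\cup B$ to $A$ and $B$ and invoking the locality of $F_\varepsilon$. Finiteness, and indeed absolute continuity with respect to $\mathcal{L}^N$, follow from $(H_4)$: taking the constant recovery sequence $u_\varepsilon\equiv u$ yields
\begin{equation*}
\mathcal{F}_{\{\varepsilon_k\}}(u,A)\le \liminf_{k\to+\infty} F_{\varepsilon_k}(u,A) \le c_2\int_A\bigl(1+B(|Du|)\bigr)\,dx <+\infty,
\end{equation*}
whose right-hand side is a finite Borel measure on $A$ since $u\in W^1L^B(\Omega;\mathbb{R}^d)$.

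The main obstacle is subadditivity, for which I would establish a fundamental estimate in the spirit of the classical $\Gamma$-convergence gluing construction. Given $A',A,B\in\mathcal{A}(\Omega)$ with $A'\subset\subset A$, and recovery sequences $u_\varepsilon^1\to u$ in $L^B(A)$ and $u_\varepsilon^2\to u$ in $L^B(B)$, I would introduce $\ell$ disjoint thin corona layers between $A'$ and $A$ with associated cut-off functions $\varphi_i\in C_c^\infty(A)$ of gradient of order $\ell$, and form $v_\varepsilon^i:=\varphi_i u_\varepsilon^1+(1-\varphi_i)u_\varepsilon^2$. Thanks to Proposition~\ref{boundaryprop} one may assume $u_\varepsilon^1=u_\varepsilon^2=u$ near the relevant boundaries, so that $v_\varepsilon^i$ is still admissible on $A'\cup B$. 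The upper bound in $(H_4)$ together with the $\triangle_2$ property of $B$ then allows bounding $F_\varepsilon(v_\varepsilon^i,A'\cup B)$ by $F_\varepsilon(u_\varepsilon^1,A)+F_\varepsilon(u_\varepsilon^2,B)$ plus a term concentrated on the $i$-th transition layer controlled by $\int B(|D\varphi_i||u_\varepsilon^1-u_\varepsilon^2|)\,dx$. Averaging over $i=1,\dots,\ell$ and selecting the best index, exploiting the strong $L^B$-convergence $u_\varepsilon^j\to u$ to annihilate this error as $\varepsilon\to 0$ and then sending $\ell\to+\infty$, one recovers subadditivity on $\mathfrak{R}$. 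Inner regularity along $\mathfrak{R}$ propagates it to all of $\mathcal{A}(\Omega)$, and the De Giorgi--Letta criterion concludes that $\mathcal{F}_{\{\varepsilon_k\}}(u,\cdot)$ is the restriction of a finite Radon measure.
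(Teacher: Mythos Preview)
Your argument is correct but follows a genuinely different route from the paper. You invoke the De~Giorgi--Letta criterion and prove subadditivity through a multi-layer fundamental estimate: $\ell$ concentric cut-offs, averaging over the layers, then sending $\ell\to\infty$ after $\varepsilon\to 0$ (using that under $\triangle_2$ norm and modular convergence in $L^B$ coincide, so $\int B(\ell|u_\varepsilon^1-u_\varepsilon^2|)\to 0$ for fixed $\ell$). The paper instead appeals to the Fonseca--Mal\'y lemma: it establishes only a \emph{nested} subadditivity $\mathcal{F}_{\{\varepsilon_k\}}(u,A)\le \mathcal{F}_{\{\varepsilon_j\}}(u,B)+\mathcal{F}_{\{\varepsilon_j\}}(u,A\setminus\overline{C})$ for $C\subset\subset B\subset\subset A$ via a \emph{single} cut-off in a thin annulus $L_\delta$, controlling the transition error not by averaging but by passing to the weak$^*$ limit of the auxiliary measures $\nu_{j_k}=G(u_{j_k},\cdot)+G(u'_{j_k},\cdot)$ with $G(v,A)=\int_A(1+B(|Dv|))\,dx$, and then letting $\delta,\eta\to 0$; it then sandwiches $\mathcal{F}_{\{\varepsilon_k\}}$ between $\mu(A)$ and $\mu(\overline{A})$ for a fixed weak$^*$ limit $\mu$ of $\int_{\cdot}f(x/\varepsilon_k,x/\varepsilon_k^2,Du_k)\,dx$. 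The trade-off: your De~Giorgi--Letta approach is more self-contained and classical (it is essentially the Dal~Maso textbook argument adapted to Orlicz growth), but requires checking four separate axioms; the paper's Fonseca--Mal\'y route replaces super- and sub-additivity by the single nested estimate plus a direct measure comparison, and its one-layer construction avoids the averaging trick at the cost of the extra weak$^*$ compactness step for $\nu_{j_k}$.
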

	
	\begin{proof}
		The proof relies on verifying the assumptions of Fonseca-Maly's lemma (see for instance its formulation in \cite[Lemma 3.4]{CRZ}), and thus it will be divided in several steps.
		
		$i)$ First we prove nested subadditivity, namely $\mathcal{F}_{\left\{ \varepsilon _{k}\right\} }\left(
		u,A\right) \leq \mathcal{F}_{\left\{ \varepsilon _{k}\right\} }\left(
		u,B\right) +\mathcal{F}_{\left\{ \varepsilon _{j}\right\} }\left(
		u,A\backslash \overline{C}\right) $ for all $A,B,C$ $\in \mathcal{A}\left(
		\Omega \right) $ such that $C\subset \subset B\subset \subset A,$ for every $u\in
		W^{1}L^{B}\left( \Omega ;%
		\mathbb{R}
		^{d}\right) .$ 
		Fix $\eta >0$ and, for the given sets, find $(u_{j})_j \subset
		L^{B}\left( \Omega ;%
		\mathbb{R}
		^{d}\right) $ such that $u_{j}\rightarrow u$ in $L^{B}\left( A\backslash 
		\overline{C};%
		\mathbb{R}
		^{d}\right) $ and \ 
		\begin{equation*}
		\underset{j\rightarrow \infty }{\lim \inf }\int_{A\backslash \overline{C}%
		}f\left( \frac{x}{\varepsilon _{j}},\frac{x}{\varepsilon _{j}^{2}}%
		,Du_{j}\left( x\right) \right) dx<\mathcal{F}_{\left\{ \varepsilon
			_{j}\right\} }\left( u,A\backslash \overline{C}\right) +\eta \leq \mathcal{F}%
		_{\left\{ \varepsilon _{j}\right\} }\left( u,A\backslash C\right) +\eta .
		\end{equation*}
		
		Moreover, up to a subsequence, we may assume that 
		\begin{equation*}
		\underset{k\rightarrow \infty }{\lim }\int_{A\backslash \overline{C}}f\left( 
		\frac{x}{\varepsilon _{j_{k}}},\frac{x}{\varepsilon _{j_{k}}^{2}}%
		,Du_{j_{k}}\left( x\right) \right) dx=\underset{k\rightarrow \infty }{\lim
			\inf }\int_{A\backslash \overline{C}}f\left( \frac{x}{\varepsilon {j_{k}}},%
		\frac{x}{\varepsilon _{j_{k}}^{2}},Du_{j_{k}}\left( x\right) \right) dx.
		\end{equation*}
	Let	$\mathcal R :=
		\left\{\cup_{I=1}^k C_i, k\in \mathbb N, C_i \in \mathfrak C\right\},$ where $\mathfrak C$ is the set of open cubes with faces parallel to the axes, centered at $x \in \Omega \cap \mathbb Q^N$, with rational edge length.
		Let $B_{0}\in \mathcal{R}$ be such that $C\subset \subset B_{0}\subset
		\subset B,$ in particular $\mathcal{L}^{N}\left( \partial B_{0}\right) =0.$
		Then, by Proposition \ref{Gammaextract}, $\mathcal{F}_{\left\{ \varepsilon _{j}\right\}
		}\left( u,B_{0}\right) $ is a $\Gamma -$limit, and thus there exists a
		sequence $( u_{j}^{\prime })\subset W^{1}L^{B}\left( \Omega ;%
				\mathbb{R}^{d}\right) $ such that $u_{j}^{\prime }\rightarrow u$ in $L^{B}\left( B_{0};%
		\mathbb{R}^{d}\right) $ and 
		$$
		\lim_{j\to \infty}\int_{B_{0}}f\left( \frac{x}{%
			\varepsilon _{j}},\frac{x}{\varepsilon _{j}^{2}},Du_{j}^{\prime }\left(
		x\right) \right) dx=\mathcal{F}_{\left\{ \varepsilon _{j}\right\} }\left(
		u,B_{0}\right).
		$$
		For every $\overline{u}\in W^{1}L^{B}\left( \Omega ;%
				\mathbb{R}^{d}\right) $ consider the functional 
		$G\left( \overline{u},A\right) :=\int_{A}\left( 1+B\left( \left\vert D%
		\overline{u}\left( x\right) \right\vert \right) \right) dx,$
		and set $\nu _{j_{k}}:=G\left( u_{j_{k}},\cdot\right) +G\left( u_{j_{k}}^{\prime
		},\cdot\right) .$
		
		Note that $\displaystyle{\nu_{j_k}(A)
		=\int_{A}2dx+\int_{A}B\left( \left\vert Du_{_{j_{k}}}\left( x\right)
		\right\vert \right) dx+\int_{A}B\left( \left\vert Du_{_{j_{k}}}^{\prime
		}\left( x\right) \right\vert \right) dx.}$
	 
\noindent	By the growth and coercivity condition $(H_4)$,
\hfill
	
	\noindent $\displaystyle{\int_{A^{\prime }}B\left(
		\left\vert Du_{_{j_{k}}}^{\prime }\left( x\right) \right\vert \right) dx
		<}$ $\displaystyle{\int_{B_{0}}f\left( \frac{x}{\varepsilon _{j}},\frac{x}{\varepsilon _{j}^{2}%
		},Du_{j}^{\prime }\left( x\right) \right) dx<\infty }$ for every $A'\subset B_0.$
	
\noindent	Analogously $\displaystyle{\liminf_{k\to +\infty}\int_{A^{\prime
		}}B\left( \left\vert Du_{_{j_{k}}}\left( x\right) \right\vert \right) dx\leq \liminf_{j\to +\infty} \int_{A\backslash \overline{C}%
		}f\left( \frac{x}{\varepsilon _{j}},\frac{x}{\varepsilon _{j}^{2}}%
		,Du_{j}\left( x\right) \right) dx}$

\noindent $\displaystyle{<\mathcal{F}_{\left\{ \varepsilon _{j}\right\} }\left( u,A\backslash \overline{C}\right) +\eta <\infty}$ for every $A^{\prime }\subset A\backslash \overline{C}.$ 
	Hence up to a not relabeled subsequence, $\nu _{j_{k}}$, restricted to $%
		B_{0}\backslash \overline{C}$, converges weakly* in the sense of measures to $\nu .$
		
\noindent For every $t>0,$ let $B_{t}=\left\{ x\in B_{0}:dist\left( x,\partial
		B_{0}\right) >t\right\} .$ For $0<2\delta <\eta ^{\prime }<\eta $ such that $%
		\nu( \partial B_{\eta ^{\prime }})=0,$ define $L_{\delta }:=B_{\eta ^{\prime
			}-2\delta }\backslash \overline{B_{\eta +\delta }}$ and take a smooth cut-off
		function $\varphi _{\delta }\in \mathcal{C}_{0}^{\infty }\left( B_{\eta
			-\delta };\left[ 0,1\right] \right),$ such that $\varphi _{\delta }\left(
		x\right) =1$ on $B_{\eta }.$ 
		Clearly $\left\Vert D\varphi _{\delta }\right\Vert _{\infty }\leq \frac{c}{_{\delta }}.$ 
		Let $\overline{u}_{k}:=u_{k}^{\prime }\varphi _{\delta }+\left(
		1-\varphi _{\delta }\right) u_{k},$ thus the strong convergence of $u_{k}^{\prime }$ and $u_{k}$ to $u$, entails that $\overline{u}_{k}$ strongly converges to $u.$
	Thus 
\begin{align*}
\int_{A}f\left( \frac{x}{\varepsilon _{k}},\frac{x}{%
			\varepsilon _{k}^{2}},D\overline{u}_{k}\left( x\right) \right) dx\leq
		\int_{B_{\eta }}f\left( \frac{x}{\varepsilon _{k}},\frac{x}{\varepsilon
			_{k}^{2}},D\overline{u}_{k}\left( x\right) \right) dx\\
+		\int_{A\backslash \overline{B_{\eta -\delta }}}f\left( \frac{x}{\varepsilon
			_{k}},\frac{x}{\varepsilon _{k}^{2}},D\overline{u}_{k}\left( x\right)
		\right) dx+\int_{L_{\delta }}f\left( \frac{x}{\varepsilon _{k}},\frac{x}{%
			\varepsilon _{k}^{2}},D\overline{u}_{k}\left( x\right) \right) dx\\
		\leq \int_{B_{\eta }}f\left( \frac{x}{\varepsilon _{k}},\frac{x}{\varepsilon
			_{k}^{2}},Du_{k}^{\prime }\left( x\right) \right) dx+\int_{A\backslash 
			\overline{B_{\eta -\delta }}}f\left( \frac{x}{\varepsilon _{k}},\frac{x}{%
			\varepsilon _{k}^{2}},Du_{k}\left( x\right) \right) dx\\
		+\int_{L_{\delta }}f\left( \frac{x}{\varepsilon _{k}},\frac{x}{\varepsilon
			_{k}^{2}},D\overline{u}_{k}\left( x\right) \right) dx.
		\end{align*}
		On the other hand,
		\begin{align*}
		\int_{L_{\delta }}f\left( \frac{x}{\varepsilon _{k}},\frac{x}{%
			\varepsilon _{k}^{2}},D\overline{u}_{k}\left( x\right) \right) dx\\
	=\int_{L_{\delta }}f\left( \frac{x}{\varepsilon _{k}},\frac{x}{\varepsilon
			_{k}^{2}},Du_{k}^{\prime }\varphi _{\delta }+\left( 1-\varphi _{\delta
		}\right) Du_{k}+\left( u_{k}^{\prime }-u_{k}\right) D\varphi _{\delta
		}\right) dx\\
		\leq \int_{L_{\delta }}f\left( \frac{x}{\varepsilon _{k}},\frac{x}{%
			\varepsilon _{k}^{2}},Du_{k}^{\prime }\varphi _{\delta }+\left( 1-\varphi
		_{\delta }\right) Du_{k}\right) dx+
		\\
		+c\int_{L_{\delta }}\frac{1+B\left( 2\left( 1+\left\vert \lambda \right\vert
			+\left\vert \mu \right\vert \right) \right) }{1+\left\vert \lambda
			\right\vert +\left\vert \mu \right\vert }\left\vert \left( u_{k}^{\prime
		}-u_{k}\right) D\varphi _{\delta }\right\vert dx\\
		\int_{L_{\delta }}\varphi _{\delta }f\left( \frac{x}{\varepsilon _{k}},%
		\frac{x}{\varepsilon _{k}^{2}},Du_{k}^{\prime }\right) dx+\int_{L_{\delta
		}}\left( 1-\varphi _{\delta }\right) f\left( \frac{x}{\varepsilon _{k}},%
		\frac{x}{\varepsilon _{k}^{2}},Du_{k}\right) dx
		\\
		+\int_{L_{\delta }}\frac{1+B\left( 2\left( 1+\left\vert \lambda \right\vert
			+\left\vert \mu \right\vert \right) \right) }{1+\left\vert \lambda
			\right\vert +\left\vert \mu \right\vert }\left\vert \left( u_{k}^{\prime
		}-u_{k}\right) D\varphi _{\delta }\right\vert dx\leq cG\left(
		u_{_{k}},L_{\delta }\right) +G\left( u_{_{k}}^{\prime },L_{\delta }\right)\\
		+\frac{1}{\delta }\int_{L_{\delta }}\frac{1+B\left( 2\left( 1+\left\vert
			\lambda \right\vert +\left\vert \mu \right\vert \right) \right) }{%
			1+\left\vert \lambda \right\vert +\left\vert \mu \right\vert }\left\vert
		\left( u_{k}^{\prime }-u_{k}\right) \right\vert dx,
		\end{align*}
		where we have defined $\lambda :=Du_{k}^{\prime
		}\varphi _{\delta }+\left( 1-\varphi _{\delta }\right) Du_{k}$ and
		$\mu :=Du_{k}^{\prime }\varphi _{\delta }+\left( 1-\varphi _{\delta }\right)
		Du_{k}+\left( u_{k}^{\prime }-u_{k}\right) D\varphi _{\delta }.$
		Observe that
		\begin{align*}
		\int_{L_{\delta }}\frac{1+B\left( 2\left( 1+\left\vert \lambda \right\vert
			+\left\vert \mu \right\vert \right) \right) }{1+\left\vert \lambda
			\right\vert +\left\vert \mu \right\vert }\left\vert \left( u_{k}^{\prime
		}-u_{k}\right) \right\vert dx
	\\	
		\leq \int_{L_{\delta }}\frac{1+B\left( 2\left( 1+\left\vert \lambda
			\right\vert +\left\vert \mu \right\vert \right) \right) }{1+\left\vert
			\lambda \right\vert +\left\vert \mu \right\vert }\left\vert \left(
		u_{k}^{\prime }-u\right) \right\vert dx+\int_{L_{\delta }}\frac{1+B\left(
			2\left( 1+\left\vert \lambda \right\vert +\left\vert \mu \right\vert \right)
			\right) }{1+\left\vert \lambda \right\vert +\left\vert \mu \right\vert }%
		\left\vert \left( u_{k}-u\right) \right\vert dx.
		\end{align*}
		For any given fixed $\delta$, the bounds give%
		\begin{align*}
		\int_{L_{\delta }}\frac{1+B\left( 2\left( 1+\left\vert \lambda \right\vert
			+\left\vert \mu \right\vert \right) \right) }{1+\left\vert \lambda
			\right\vert +\left\vert \mu \right\vert }\left\vert \left( u_{k}^{\prime
		}-u\right) \right\vert dx\rightarrow 0,
		\\
		\int_{L_{\delta }}\frac{1+B\left( 2\left( 1+\left\vert \lambda \right\vert
			+\left\vert \mu \right\vert \right) \right) }{1+\left\vert \lambda
			\right\vert +\left\vert \mu \right\vert }\left\vert \left( u_{k}-u\right)
		\right\vert dx\rightarrow 0,
		\end{align*}%
		as $k\to \infty$.
		Recalling that $\eta $ and $\delta $ can be chosen sufficiently small so that $C\subset B_{\eta -\delta
		},L_{\delta }\subset B_{0}\backslash \overline{C}$, passing to liminf (on $k$)
		we get, 
		\begin{align*}\mathcal{F}_{\left\{ \varepsilon _{k}\right\} }\left( u,A\right)
		\leq {\mathcal{F}_{\left\{ \varepsilon
					_{k}\right\} }\left( u,B_{0}\right) }+\mathcal{F}_{\left\{ \varepsilon _{k}\right\} }\left(
				u,A\backslash \overline{C}\right) +\eta +c\nu \left( \overline{L_{\delta }}%
		\right) \\
		\leq \mathcal{F}_{\left\{ \varepsilon
					_{j}\right\} }\left( u,B\right) +{\mathcal{F}_{\left\{ \varepsilon _{j}\right\} }\left(
				u,A\backslash \overline{C}\right) +\eta }+c\nu
				\left( B_{\eta ^{\prime }}\backslash \overline{B_{\eta }}\right),
			\end{align*}    where we have sent $\delta \rightarrow 0$. Letting
		$ \eta \rightarrow 0 ,$ we deduce, $\mathcal{F}_{\left\{
			\varepsilon _{k}\right\} }\left( u,A\right) \leq \mathcal{F}_{\left\{
			\varepsilon _{j}\right\} }\left( u,B\right) +\mathcal{F}_{\left\{
			\varepsilon _{j}\right\} }\left( u,A\backslash \overline{C}\right) +c\nu \left( B_{0}\right).$
		Thus subadditivity is established.
		
		$ii)$ Now we prove that for any $A\in \mathcal{A}\left(
		\Omega \right)$, and $\varepsilon >0$, we can find $A_{\varepsilon }\in \mathcal{A}%
		\left( \Omega \right) $ such that $A_{\varepsilon }\subset \subset A$ and $\mathcal F_{\varepsilon_j}(A \setminus \overline{A_{\varepsilon }})<\varepsilon$.  To this end take $A_\varepsilon \in \mathcal A(\Omega)$ with $A_\varepsilon \subset \subset A$ and such that 
		$\int_{A\backslash \overline{A_{\varepsilon }}}\left( 1+B\left( Du\left(
		x\right) \right) \right) dx<\frac{\varepsilon }{c_2}$, where $c_2$ is the
		constant in $(H_4)$. Thus, 
		\begin{align*}
		\mathcal{F}_{\left\{ \varepsilon _{k}\right\} }\left( u,A\backslash 
		\overline{A_{\varepsilon }}\right) \leq \underset{k\rightarrow \infty }{\lim
			\inf }\int_{A\backslash \overline{A_{\varepsilon }}}f\left( \frac{x}{%
			\varepsilon _{k}},\frac{x}{\varepsilon _{k}^{2}},Du_{k}\left( x\right)
		\right) dx\\
		\leq c_2\int_{A\backslash \overline{A_{\varepsilon }}}\left( 1+B\left( \left\vert
		Du_{k}\left( x\right) \right\vert \right) \right) dx\leq \varepsilon,
		\end{align*} as desired.
		
		In the two following steps we prove that $\mathcal F_{\{\varepsilon_j\}}(\Omega) \geq \mu(\Omega)$ . Then we prove that for all $A \in \mathcal (\Omega)$
		$\mathcal F_{\{\varepsilon_j\}}(A) \leq \mu (\overline{A})$.
		 $iii)$ Take $\Omega ^{\prime }\subset \subset \Omega$.
	Define for every $A \in \mathcal A(\Omega)$, $\mu _{k}(A):=\int_{A \cap \Omega}f\left( \frac{x}{\varepsilon _{k}},\frac{x}{\varepsilon
		_{k}^{2}},Du_{k}\left( x\right) \right) dx.$ 
	
	Up to a subsequence, there exists $\left\{ \varepsilon _{k}\right\} $
	(not relabeled) such that $u_{k}\rightharpoonup u$ in $W^{1}L^{B}\left(
	\Omega ;%
	\mathbb{R}
	^{d}\right) $ and  $\mathcal{F}_{\left\{ \varepsilon _{k}\right\} }\left( u,\Omega
	\right) =\underset{k\rightarrow \infty }{\lim }\int_{\Omega }f\left( \frac{x%
	}{\varepsilon _{k}},\frac{x}{\varepsilon _{k}^{2}},Du_{k}\left( x\right)
	\right) dx<+\infty.$ The existence of such a sequence is easily proven, taking into account the definition of $\mathcal F_{\{\varepsilon_j\}}$, the coercivity condition $(H_4)$, the fact that 
$	\theta_k:= \int_{\Omega }f\left( \frac{x}{\varepsilon _{k}},%
	\frac{x}{\varepsilon _{k}^{2}},Du_{k}\left( x\right) \right) dx \geq \frac{1}{c_1}\int_{\Omega }B\left( \left\vert Du_{k}\left( x\right)
	\right\vert \right) dx,$
is bounded, the estimates
	%
	\begin{equation}\label{estB}
	\begin{tabular}{l}
	$\int_{\Omega }B\left\vert u\right\vert dy\leq \left\Vert u\right\Vert
	_{B,\Omega }$ if $\left\Vert u\right\Vert _{B,\Omega }\leq 1$ \\ 
	$\left\Vert u\right\Vert _{B,\Omega }\leq \int_{\Omega }B\left\vert
	u\right\vert dy$ if $\left\Vert u\right\Vert _{B,\Omega }\succ 1,$%
	\end{tabular}%
	\end{equation}
	and the fact that $u_{k}\rightarrow u$ in $L^{B}\left( \Omega ;\mathbb{R}^{d}\right).$
	Thus $\mu_k$ converges weakly* in the sense of measures to $\mu $. 
	It is easy to see that 
		\begin{align*}\mu \left( \Omega ^{\prime }\right) \leq \underset{%
			k\rightarrow \infty }{\lim \inf }\int_{\Omega }f\left( \frac{x}{\varepsilon
			_{k}},\frac{x}{\varepsilon _{k}^{2}},Du_{k}\left( x\right) \right) dx\\
		=\underset{k\rightarrow \infty }{\lim }\int_{\Omega }f\left( \frac{x}{%
			\varepsilon _{k}},\frac{x}{\varepsilon _{k}^{2}},Du_{k}\left( x\right)
		\right) dx=\mathcal{F}_{\left\{ \varepsilon _{k}\right\} }\left( u,\Omega
		\right) .
		\end{align*} Therefore, $\mu \left( \Omega ^{\prime }\right) \leq \mathcal{F}%
		_{\left\{ \varepsilon _{k}\right\} }\left( u,\Omega \right) ,$ for all $%
		\Omega ^{\prime }\subset \subset \Omega .$ Hence, $\mu \left( \Omega \right)
		\leq \mathcal{F}_{\left\{ \varepsilon _{k}\right\} }\left( u,\Omega \right) .
		$ 
		
		$iv)$ 
		For every $A \in \mathcal A(\Omega)$, arguing as above, it results that
		\begin{align*}\mu _{k}\left( A\right)
		=\int_{A\cap \Omega }f\left( \frac{x}{\varepsilon _{k}},\frac{x}{\varepsilon
			_{k}^{2}},Du_{k}\left( x\right) \right) dx\leq \int_{\Omega }f\left( \frac{x%
		}{\varepsilon _{k}},\frac{x}{\varepsilon _{k}^{2}},Du_{k}\left( x\right)
		\right) dx<+\infty ,
		\end{align*} hence, taken $\mu$ as above 
		it follows that 
		\begin{align*}\mathcal{F}_{\left\{ \varepsilon
			_{k}\right\} }\left( u,A\right) \leq \underset{k\rightarrow \infty }{\lim
			\inf }\int_{\Omega }f\left( \frac{x}{\varepsilon _{k}},\frac{x}{\varepsilon
			_{k}^{2}},Du_{k}\left( x\right) \right) dx\leq \mu \left( \overline{A}%
		\right),
		\end{align*} and we obtained $iv).$
		
	As consequence of the above results we conclude that $\mathcal{F}%
		_{\left\{ \varepsilon _{k}\right\} }\left( u,A\right) =\mu \left( A\right) ,$ for a suitable Radon measure $\mu$, 
		for all $A\in \mathcal{A}\left( \Omega \right)$, and moreover it is immediately seen that this measure $\mu$ is absolutely continuous with respect to the Lebesgue measure.
		
%
\end{proof}

The proof of the following lemma is omitted since it can be immediately deduced from \cite[Proposition 3.2]{FNE reit}, while for the $L^p$ counterpart we refer to \cite{Elvira 1}.
Indeed it is worth to observe that assumptions $(H_1)-(H_4)$ herein do not deeply differ from the assumptions in \cite{FNE reit}. Indeed therein we assumed strict convexity in the last argument, where in the present paper we just impose convexity on $f$. On the other hand, the stronger assumption of strict convexity was used just to prove existence of a unique minimizer to the 'reiterated two-scale' limit functional (in Theorem 1.1 therein).
Moreover the continuity assumption $(H_1)$ in the current manuscript is a stronger assumptions than $(H_1)$ in \cite{FNE reit}.

\begin{lemma}\label{Lemma 3.4}
If $f$ satisfies hypotheses $\left( H_{1}\right) ,\left( H_{2}\right)
,\left( H_{3}\right) $ and if $\left( w_{\varepsilon }\right)_{\varepsilon }
\subset L^B\left( \Omega ;%
\mathbb{R}
^{d}\right)$ reiteratively two-scales converges to a function $w_{0}\in L^B\left(
\Omega \times Y\times Z;%
\mathbb{R}
^{d}\right) $ then 
\begin{align*}\underset{\varepsilon \rightarrow 0}{\lim \inf }%
\int_{\Omega }f\left(\frac{x}{\varepsilon },\frac{x}{\varepsilon ^{2}}%
,w_{\varepsilon }(x)\right)dx\geq \int_{\Omega }\int_{Y}\int_{Z}f(y,z,w_{0}\left(
x,y,z\right) )dxdydz.
\end{align*}
\end{lemma}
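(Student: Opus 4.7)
The strategy is the classical convex linearization argument adapted to the reiterated two-scale framework in Orlicz spaces, analogous to \cite[Proposition 3.2]{FNE reit}. The plan is to exploit convexity of $f(y,z,\cdot)$ to replace the nonlinear integrand by an affine expression in $w_\varepsilon$, for which weak reiterated two-scale convergence is readily applicable, and then recover the nonlinear bound by approximating $w_0$ with smooth admissible test functions.

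First, I would fix an arbitrary $\bar w \in \mathcal{C}(\overline{\Omega};\mathcal{C}_{per}(Y\times Z))^d$, set $\bar w^\varepsilon(x):=\bar w(x,x/\varepsilon,x/\varepsilon^2)$ as in \eqref{ve}, and use the convexity hypothesis $(H_3)$ together with the existence of $\partial f/\partial \lambda$ from $(A_3)$ to write the pointwise subgradient inequality
\begin{equation*}
f\!\left(\tfrac{x}{\varepsilon},\tfrac{x}{\varepsilon^2},w_\varepsilon(x)\right)\;\geq\; f\!\left(\tfrac{x}{\varepsilon},\tfrac{x}{\varepsilon^2},\bar w^\varepsilon(x)\right)+\tfrac{\partial f}{\partial \lambda}\!\left(\tfrac{x}{\varepsilon},\tfrac{x}{\varepsilon^2},\bar w^\varepsilon(x)\right)\cdot\bigl(w_\varepsilon(x)-\bar w^\varepsilon(x)\bigr).
\end{equation*}
Integrating over $\Omega$ and taking $\liminf_{\varepsilon\to 0}$ reduces matters to identifying the limit of two terms: the integral of $f(\cdot,\cdot,\bar w^\varepsilon)$ and the ``cross term'' involving $\partial f/\partial \lambda$.

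Next I would pass to the limit in each term using the reiterated two-scale machinery of Section \ref{reitfirst}. By $(H_1)$ and $(H_2)$, the map $(y,z)\mapsto f(y,z,\bar w(x,y,z))$ is continuous and $Y\times Z$-periodic, so the corollary following Proposition \ref{lemma2.2} gives strong reiterated two-scale convergence of $f(\tfrac{x}{\varepsilon},\tfrac{x}{\varepsilon^2},\bar w^\varepsilon)$ toward $f(y,z,\bar w(x,y,z))$; hence $\int_\Omega f(\tfrac{x}{\varepsilon},\tfrac{x}{\varepsilon^2},\bar w^\varepsilon)\,dx\to \iiint f(y,z,\bar w)\,dxdydz$. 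Similarly, $\partial f/\partial \lambda(\tfrac{x}{\varepsilon},\tfrac{x}{\varepsilon^2},\bar w^\varepsilon)$ is strongly reiteratively two-scale convergent and lies in a space admissible as a test function in \eqref{2b}; combined with the hypothesis $w_\varepsilon\overset{reit-2s}{\rightharpoonup}w_0$ and $\bar w^\varepsilon\overset{reit-2s}{\rightharpoonup}\bar w$, the cross term converges to $\iiint \partial_\lambda f(y,z,\bar w)\cdot(w_0-\bar w)\,dxdydz$. Thus
\begin{equation*}
\liminf_{\varepsilon\to 0}\int_\Omega f\!\left(\tfrac{x}{\varepsilon},\tfrac{x}{\varepsilon^2},w_\varepsilon\right)dx\;\geq\;\iiint_{\Omega\times Y\times Z}\!\!\bigl[f(y,z,\bar w)+\partial_\lambda f(y,z,\bar w)\cdot(w_0-\bar w)\bigr]\,dxdydz.
\end{equation*}

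Finally, I would let $\bar w$ run through a sequence in $\mathcal{C}(\overline{\Omega};\mathcal{C}_{per}(Y\times Z))^d$ converging to $w_0$ in $L^B(\Omega\times Y\times Z;\mathbb{R}^d)$, using density granted by the $\Delta_2$ condition on $B$ and $\widetilde B$. Continuity in the third variable and the bounds coming from convexity (or Ioffe's semicontinuity, Proposition \ref{Ioffethm}) allow passage to the limit on the right-hand side: the linear correction vanishes because $w_0-\bar w\to 0$ in $L^B$, while $\iiint f(y,z,\bar w)\,dxdydz\to\iiint f(y,z,w_0)\,dxdydz$. This yields the desired lower bound. The main technical obstacle is controlling the cross term in the Orlicz setting: one needs $\partial_\lambda f(\cdot,\cdot,\bar w)$ to be an admissible test function in the sense of \eqref{2b} (which is essentially the content of Proposition \ref{lemma2.2} applied to $\mathfrak{X}^{B,\infty}_{per}$), together with uniform integrability of the remainder as $\bar w\to w_0$, which is where the $\Delta_2$ condition and the conjugate pairing between $B$ and $\widetilde B$ are crucial.
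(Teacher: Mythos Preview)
Your overall strategy---convex linearization against smooth test fields, passage to the limit via reiterated two-scale convergence, then approximation $\bar w\to w_0$---is exactly what the paper intends: it omits the proof entirely and simply cites \cite[Proposition 3.2]{FNE reit}, as you do in your first sentence. In that sense your proposal and the paper's treatment coincide.

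There is, however, one point that does not quite fit the stated hypotheses. You invoke $(A_3)$ to produce the gradient $\partial f/\partial\lambda$, but Lemma~\ref{Lemma 3.4} assumes only $(H_1),(H_2),(H_3)$, and in the paper's usage here $(H_1)$ is the \emph{continuity} alternative (see the paragraph preceding the lemma). Mere continuity of $f$ plus convexity in $\lambda$ does not give a single-valued $\partial f/\partial\lambda$, let alone one that is continuous in $(y,z)$ and hence usable as a test function in \eqref{2b}. The detailed argument you wrote is in fact the proof of the \emph{next} lemma in the paper (Lemma~\ref{Lemma 3.3}), which explicitly assumes $(A_1)$--$(A_3)$ together with the growth condition $(H_4)$; there the subgradient inequality with $\partial f/\partial\lambda$ is legitimate, and $(H_4)$ supplies the bound $|\partial_\lambda f(y,z,\xi)|\le c(1+b(1+|\xi|))$ needed to show the cross term vanishes as $\bar w\to w_0$ in $L^B$. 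Under the hypotheses of Lemma~\ref{Lemma 3.4} alone, the standard route (and presumably the one in \cite{FNE reit}) is to represent the continuous convex integrand as a countable supremum of affine functions $a_j(y,z)\cdot\lambda+b_j(y,z)$ with $a_j,b_j\in\mathcal C_{per}(Y\times Z)$, apply the weak reiterated two-scale convergence to each affine piece, and recover $f$ via monotone convergence---thereby avoiding both differentiability and growth assumptions.
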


Now we present an analogous result but under different assumptions.
\begin{lemma}\label{Lemma 3.3}
	If $f$ satisfies hypotheses $\left( A_{1}\right) ,\left( A_{2}\right) ,\left(
	A_{3}\right) ,\left( H_{2}\right) ,\left( H_{3}\right) $ and $\left(
	H_{4}\right) ,$ if \ $(w_{\varepsilon })_{\varepsilon } \subset $\ $L^{B}\left(
	\Omega ;%
	\mathbb{R}
	^{d}\right)$ reiteratively two-scales converge to $w_{0}\in L^{B}\left( \Omega
	\times Y\times Z;\mathbb R^d\right) $\ \ then $$\underset{\varepsilon \rightarrow 0}{\lim
		\inf }\int_{\Omega }f\left(\frac{x}{\varepsilon },\frac{x}{\varepsilon ^{2}}%
	,w_{\varepsilon }(x)\right)dx\geq \int_{\Omega }\int_{Y}\int_{Z}f(y,z,w_{0}\left(
	x,y,z\right) )dxdydz.$$
\end{lemma}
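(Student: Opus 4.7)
The plan is to replace the global continuity assumption $(H_1)$ used in Lemma~\ref{Lemma 3.4} by the combination of convexity $(H_3)$ and differentiability $(A_3)$, and to exploit the resulting subgradient inequality rather than a direct Ioffe-type lower semicontinuity statement. Concretely, for every $y\in \Omega$, a.e.\ $z \in \mathbb R^N_z$ and every $\lambda,\mu\in\mathbb R^s_\ast$ one has
\begin{equation*}
f(y,z,\lambda)\ge f(y,z,\mu)+\tfrac{\partial f}{\partial\lambda}(y,z,\mu)\cdot(\lambda-\mu).
\end{equation*}
Applying this pointwise with $\lambda=w_\varepsilon(x)$ and $\mu=\varphi(x,x/\varepsilon,x/\varepsilon^2)=\varphi^\varepsilon(x)$, where $\varphi\in\mathcal C(\overline\Omega;\mathcal C_{per}(Y\times Z;\mathbb R^s_\ast))$ is a suitable test field, and integrating over $\Omega$ yields
\begin{equation*}
\int_\Omega f\bigl(\tfrac{x}{\varepsilon},\tfrac{x}{\varepsilon^2},w_\varepsilon\bigr)dx
\ge \int_\Omega f\bigl(\tfrac{x}{\varepsilon},\tfrac{x}{\varepsilon^2},\varphi^\varepsilon\bigr)dx
+\int_\Omega \tfrac{\partial f}{\partial\lambda}\bigl(\tfrac{x}{\varepsilon},\tfrac{x}{\varepsilon^2},\varphi^\varepsilon\bigr)\cdot(w_\varepsilon-\varphi^\varepsilon)\,dx.
\end{equation*}

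I would then pass to the limit $\varepsilon\to 0$ in both integrals on the right. For the first term, properties $(A_1),(A_2)$ and the growth control from $(H_4)$ place $f(\cdot,\cdot,\varphi(\cdot,\cdot,\cdot))$ in the admissible class $\mathcal{C}(\overline\Omega)\otimes \mathfrak X^{B,\infty}_{per}(\mathbb R^N_y;\mathcal C_b(\mathbb R^N_z))$, so that Proposition~\ref{lemma2.2} gives
$\int_\Omega f(x/\varepsilon,x/\varepsilon^2,\varphi^\varepsilon)dx\to \iiint_{\Omega\times Y\times Z} f(y,z,\varphi)\,dxdydz$. Since by $(A_3)$ the derivative $\partial_\lambda f$ inherits $(A_1),(A_2)$, an identical argument, combined with the reiterated weak two-scale convergence $w_\varepsilon-\varphi^\varepsilon\overset{reit-2s}{\rightharpoonup}w_0-\varphi$ in $L^{B}(\Omega;\mathbb R^s_\ast)$, produces the limit $\iiint_{\Omega\times Y\times Z}\partial_\lambda f(y,z,\varphi)\cdot(w_0-\varphi)\,dxdydz$ for the second term.

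To conclude I would let $\varphi$ approach $w_0$. By density of $\mathcal{C}(\overline\Omega;\mathcal C_{per}(Y\times Z;\mathbb R^s_\ast))$ in $L^{B}(\Omega\times Y\times Z;\mathbb R^s_\ast)$ choose $\varphi_n\to w_0$ in $L^B$; the $\Delta_2$ property on $B$ and $\widetilde B$ together with the growth bounds $(H_4)$ guarantee Nemytskii continuity, so that $\iiint f(y,z,\varphi_n)\to \iiint f(y,z,w_0)$ and $\iiint\partial_\lambda f(y,z,\varphi_n)\cdot(w_0-\varphi_n)\to 0$, leaving precisely the desired lower bound.

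The main obstacle is the limit passage when $f$ and $\partial_\lambda f$ are only measurable (and not continuous) in the variable $z$, so that strictly speaking $f(\cdot,\cdot,\varphi)$ does not belong to $L^{\widetilde B}(\Omega;\mathcal C_{per}(Y\times Z))$. This is resolved by working inside the enlarged admissible class $\mathfrak X^{B,\infty}_{per}(\mathbb R^N_y;\mathcal C_b(\mathbb R^N_z))$, which was tailored in Section~\ref{reitfirst} precisely to accommodate $\mathcal C_{per}$--regularity in $y$ together with mere measurability/boundedness in $z$, combined with a Scorza-Dragoni/Lusin truncation of $f$ and $\partial_\lambda f$ in $z$ and the measurable selection provided by Proposition~\ref{Aumannselection}. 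The $L^{\widetilde B}$ integrability of $\partial_\lambda f(\tfrac{x}{\varepsilon},\tfrac{x}{\varepsilon^2},\varphi^\varepsilon)$, which is required to pair with the reiteratively weakly two-scale convergent sequence $w_\varepsilon-\varphi^\varepsilon$, is secured by the relation $\widetilde B(b(t))\le tb(t)\le B(2t)$ recalled in Section~\ref{notations}, applied with $t=|\varphi^\varepsilon|$ together with $(H_4)$.
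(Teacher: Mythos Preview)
Your overall strategy---subgradient inequality at a smooth test field $\varphi^\varepsilon$, pass to the limit $\varepsilon\to0$, then let $\varphi\to w_0$ in $L^B$---is exactly the paper's. The paper also opens by deriving the explicit bound $\bigl|\partial_\lambda f(y,z,\xi)\bigr|\le c\bigl(1+b(1+|\xi|)\bigr)$ from $(H_3)$--$(H_4)$, which is what makes $\partial_\lambda f(\cdot,\cdot,\theta_j)\in L^{\widetilde B}$ and drives all the subsequent Orlicz estimates; you only invoke the relation $\widetilde B(b(t))\le B(2t)$ at the end, but this is the same content.

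Where your proposal goes wrong is the final paragraph. The space $\mathfrak{X}^{B,\infty}_{per}(\mathbb R^N_y;\mathcal C_b(\mathbb R^N_z))$ is \emph{not} designed for ``mere measurability/boundedness in $z$'': by its very definition its elements take values in $\mathcal C_b(\mathbb R^N_z)$, i.e.\ they are continuous in $z$ and merely $L^B$-type in $y$. This is the \emph{opposite} regularity pattern to what $(A_1)$--$(A_2)$ give (continuous in $y$, measurable in $z$), so your claim that $f(\cdot,\cdot,\varphi(\cdot,\cdot,\cdot))$ lands in $\mathcal C(\overline\Omega)\otimes\mathfrak X^{B,\infty}_{per}$ and hence that Proposition~\ref{lemma2.2} applies is unfounded. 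Furthermore, neither a Scorza--Dragoni/Lusin step nor Aumann's measurable selection (Proposition~\ref{Aumannselection}) is used by the paper for this lemma; the paper simply asserts the limits ``by reiterated two-scale convergence'' and focuses its effort on the $j\to\infty$ passage, carrying out the Orlicz estimates (Young's inequality with the pair $B,\widetilde B$, the bound $\widetilde B(b)\le B(2\cdot)$, and $\Delta_2$) in detail. Aumann selection appears only later, in the upper-bound Lemma~\ref{lemmaub}, for an entirely different purpose. So the core of your proof is right and matches the paper, but the resolution you propose for the $z$-measurability obstacle misreads the function space and imports machinery the paper does not use.
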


\begin{proof}
	Since $f$ is convex, for all $x\in 	\mathbb{R}^N$, a.e. $y\in \mathbb{R}
	^{N},$ and all $\xi \in 
	\mathbb{R}^{d\times N}$ we have $f\left( x,y,\xi ^{\prime }\right) \geq $ $f\left(
	x,y,\xi \right) +\frac{\partial f}{\partial \xi }\left( x,y,\xi \right)
	\cdot\left( \xi ^{\prime }-\xi \right) .$ Hence, $$\frac{\partial f}{\partial \xi 
	}\left( x,y,\xi \right) \cdot\left( \xi ^{\prime }-\xi \right) \leq f\left(
	x,y,\xi ^{\prime }\right) -f\left( x,y,\xi \right) \leq c\left( \frac{%
		1+B\left( 2\left( 1+\left\vert \xi \right\vert +\left\vert \xi ^{\prime
		}\right\vert \right) \right) }{1+\left\vert \xi \right\vert +\left\vert \xi
		^{\prime }\right\vert }\right) \left\vert \xi -\xi ^{\prime }\right\vert .$$
	Choose $\xi ^{\prime }=\xi \pm E_{i}$ whith $E_{i}$ the canonical base of $\ 
	\mathbb{R}
	^{d\times N}$ and get $$\frac{\partial f}{\partial \xi }\left( x,y,\xi
	\right) \cdot\left( \pm E_{i}\right) \leq c\left( \frac{1+B\left( 2\left(
		1+\left\vert \xi \right\vert +\left\vert \xi ^{\prime }\pm E_{i}\right\vert
		\right) \right) }{1+\left\vert \xi \right\vert +\left\vert \xi ^{\prime }\pm
		E_{i}\right\vert }\right) \left\vert \pm E_{i}\right\vert \leq c^{\prime
	}\left( 1+b\left( 1+\left\vert \xi \right\vert \right) \right) .$$ Indeed,
	
	$\left\vert \xi ^{\prime }\pm E_{i}\right\vert \leq \left\vert \pm
	E_{i}\right\vert +\left\vert \xi ^{\prime }\right\vert =1+\left\vert \xi
	^{\prime }\right\vert ,$ hence $$\frac{1+B\left( 2\left( 1+\left\vert \xi
		\right\vert +\left\vert \xi ^{\prime }\pm E_{i}\right\vert \right) \right) }{%
		1+\left\vert \xi \right\vert +\left\vert \xi ^{\prime }\pm E_{i}\right\vert }%
	\leq \frac{1+B\left( 4\left( 1+\left\vert \xi \right\vert \right) \right) }{%
		1+\left\vert \xi \right\vert }
		\leq 1+\frac{B\left( 4\left( 1+\left\vert \xi \right\vert
		\right) \right) }{1+\left\vert \xi \right\vert },$$
	where we have exploited the fact that 
	$B\in \triangle _{2}$.
%
%
%
%
%
	We then have $$\left\vert \frac{\partial f}{\partial \xi }\left( x,y,\xi
	\right) \right\vert \leq c\left( 1+b\left( 1+\left\vert \xi \right\vert
	\right) \right) .$$
	
	Let $(\theta _{j})_j \subset \mathcal{C}_{c}\left( \Omega ;%
	\mathcal{C}_{per}\left( Y\times Z\right) \right) $ be such that $\theta
	_{j}\rightarrow w_{0}$ in $L_{per}^{B}\left( \Omega \times Y\times Z\right)
	. $ The assumptions on $f$ guarantee that, for fixed $j\in 
	\mathbb{N}
	,$ \begin{align}
	\underset{\varepsilon \rightarrow 0}{\lim \inf }\int_{\Omega }f(\frac{x}{%
		\varepsilon },\frac{x}{\varepsilon ^{2}},w_{\varepsilon })dx\geq \underset{%
		\varepsilon \rightarrow 0}{\lim }\int_{\Omega }f\left(\frac{x}{\varepsilon },%
	\frac{x}{\varepsilon ^{2}},\theta _{j}\left( x,\frac{x}{\varepsilon },\frac{x%
}{\varepsilon ^{2}}\right) \right)dx\nonumber \\
	-\underset{\varepsilon \rightarrow 0}{\lim \sup }\int_{\Omega }\frac{%
		\partial f}{\partial \xi }\left(\frac{x}{\varepsilon },\frac{x}{\varepsilon ^{2}}%
	,\theta _{j}\left( x,\frac{x}{\varepsilon },\frac{x}{\varepsilon ^{2}}%
	\right)\right )\left[ \theta _{j}\left( x,\frac{x}{\varepsilon },\frac{x}{%
		\varepsilon ^{2}}\right) -w_{\varepsilon }\right] dx \nonumber \\
	\geq \int_{\Omega }\int_{Y}\int_{Z}f(y,z,\theta _{j}\left( x,y,z\right) )dx-%
	\underset{\varepsilon \rightarrow 0}{\lim \sup }I_{j,\varepsilon },\nonumber
	\end{align}
	where
	$$I_{j,\varepsilon }=\int_{\Omega }\frac{\partial f}{\partial \xi }\left(\tfrac{x}{%
		\varepsilon },\tfrac{x}{\varepsilon ^{2}},\theta _{j}\left( x,\tfrac{x}{%
		\varepsilon },\tfrac{x}{\varepsilon ^{2}}\right) \right)\theta _{j}\left( x,\tfrac{x%
	}{\varepsilon },\tfrac{x}{\varepsilon ^{2}}\right) dx-\int_{\Omega }\frac{\partial f}{\partial \xi }\left(\tfrac{x}{\varepsilon },%
	\tfrac{x}{\varepsilon ^{2}},\theta _{j}\left( x,\tfrac{x}{\varepsilon },\tfrac{x%
	}{\varepsilon ^{2}}\right) \right)w_{\varepsilon }dx$$ therefore, by reiterated two-scale convergence 
	\begin{align*}
	\underset{\varepsilon \rightarrow 0}{\lim \sup }I_{j,\varepsilon }=\underset{%
		\varepsilon \rightarrow 0}{\lim }I_{j,\varepsilon }
	=\int_{\Omega }\int_{Y}\int_{Z}\frac{\partial f}{\partial \xi }(y,z,\theta
	_{j}\left( x,y,z\right) )\theta _{j}\left( x,y,z\right) dxdydz-
\\
\int_{\Omega }\int_{Y}\int_{Z}\frac{\partial f}{\partial \xi }(y,z,\theta
	_{j}\left( x,y,z\right) )w_{0}\left( x,y,z\right) dxdydz,  
	\end{align*}  
	and
	\begin{align}
	\underset{\varepsilon \rightarrow 0}{\lim \inf }\int_{\Omega }f\left(\tfrac{x}{%
		\varepsilon },\tfrac{x}{\varepsilon ^{2}},w_{\varepsilon }\right)dx\geq
	\int_{\Omega }\int_{Y}\int_{Z}f(y,z,\theta _{j}\left( x,y,z\right) )dxdydz \nonumber \\
	-\int_{\Omega }\int_{Y}\int_{Z}\frac{\partial f}{\partial \xi }(y,z,\theta
	_{j}\left( x,y,z\right) )\theta _{j}\left( x,y,z\right) dxdydz+\label{3.4eq}\\
	+\int_{\Omega }\int_{Y}\int_{Z}\frac{\partial f}{\partial \xi }(y,z,\theta
	_{j}\left( x,y,z\right) )w_{0}\left( x,y,z\right) dxdydz.\nonumber
	\end{align}
	On the other hand,
	\begin{align*}\underset{j\rightarrow +\infty }{\lim \inf }\Big(\int_{\Omega
	}\int_{Y}\int_{Z}f(y,z,\theta _{j}\left( x,y,z\right) )dxdydz\\
-\int_{\Omega }\int_{Y}\int_{Z}\frac{\partial f}{\partial \xi }(y,z,\theta
	_{j}\left( x,y,z\right) )\left( \theta _{j}\left( x,y,z\right) -w_{0}\left(
	x,y,z\right) \right) dxdydz\Big)\\
=\underset{j\rightarrow +\infty }{\lim \inf }\Big(\int_{\Omega
	}\int_{Y}\int_{Z}f(y,z,\theta _{j}\left( x,y,z\right) \Big)dxdydz
\\
-\underset{j\rightarrow +\infty }{\lim \sup }\Big(\int_{\Omega }\int_{Y}\int_{Z}%
	\frac{\partial f}{\partial \xi }(y,z,\theta _{j}\left( x,y,z\right) )\left(
	\theta _{j}\left( x,y,z\right) -w_{0}\left( x,y,z\right) \right) dxdydz\Big).\end{align*}

	\begin{align*}
	\left\vert \int_{\Omega }\int_{Y}\int_{Z}\frac{\partial f}{\partial \xi }%
	(y,z,\theta _{j}\left( x,y,z\right) )\left( \theta _{j}\left( x,y,z\right)
	-w_{0}\left( x,y,z\right) \right) dxdydz\right\vert
	\\
	\leq \int_{\Omega }\int_{Y}\int_{Z}\left\vert \frac{\partial f}{\partial
		\xi }(y,z,\theta _{j}\left( x,y,z\right) )\right\vert \left\vert \left(
	\theta _{j}\left( x,y,z\right) -w_{0}\left( x,y,z\right) \right) \right\vert
	dxdydz
	\\
	\leq c\int_{\Omega }\int_{Y}\int_{Z}\left( 1+b\left( 1+\left\vert \theta
	_{j}\left( x,y,z\right) \right\vert \right) \right) \left\vert \theta
	_{j}\left( x,y,z\right) -w_{0}\left( x,y,z\right) \right\vert dxdydz\\
	\leq c\int_{\Omega }\int_{Y}\int_{Z}\left\vert \theta _{j}\left(
	x,y,z\right) -w_{0}\left( x,y,z\right) \right\vert dxdydz+\\
	c\int_{\Omega }\int_{Y}\int_{Z}\alpha \left( b\left( 1+\left\vert \theta
	_{j}\left( x,y,z\right) \right\vert \right) \right) \frac{\left\vert \theta
		_{j}\left( x,y,z\right) -w_{0}\left( x,y,z\right) \right\vert }{\alpha}
	dxdydz \;\;\left( 0<\alpha <1\right) \\
	\leq c\left\Vert \theta _{j}-w_{0}\right\Vert _{B,\Omega \times Y\times
		Z}\\
	+c\int_{\Omega }\int_{Y}\int_{Z}\widetilde{B}\left( \alpha b\left(
	1+\left\vert \theta _{j}\left( x,y,z\right) \right\vert \right) \right)
	dxdydz\\
	+c\int_{\Omega }\int_{Y}\int_{Z}B\left( \frac{\left\vert \theta _{j}\left(
		x,y,z\right) -w_{0}\left( x,y,z\right) \right\vert }{\alpha }\right) dxdydz\\
	\leq c\left\Vert \theta _{j}-w_{0}\right\Vert _{B,\Omega \times Y\times
		Z}
	+c\alpha \int_{\Omega
	}\int_{Y}\int_{Z}\widetilde{B}\left( b\left( 1+\left\vert \theta _{j}\left(
	x,y,z\right) \right\vert \right) \right) dxdydz\\
	+c\int_{\Omega }\int_{Y}\int_{Z}B\left( \frac{\left\vert \theta _{j}\left(
		x,y,z\right) -w_{0}\left( x,y,z\right) \right\vert }{\alpha }\right)
	dxdydz\leq c\left\Vert \theta _{j}-w_{0}\right\Vert _{B,\Omega \times
		Y\times Z}+\\
	c\alpha c^{\prime }+c\alpha \int_{\Omega }\int_{Y}\int_{Z}B\left(
	1+\left\vert \theta _{j}\left( x,y,z\right) \right\vert \right) dxdydz\\
	+c\int_{\Omega }\int_{Y}\int_{Z}B\left( \frac{\left\vert \theta _{j}\left(
		x,y,z\right) -w_{0}\left( x,y,z\right) \right\vert }{\alpha }\right) dxdydz.
	\end{align*}
	
	
	
	
Exploiting the properties of $B$, we obtain
	\begin{align*}
	\left\vert \int_{\Omega }\int_{Y}\int_{Z}\frac{\partial f}{\partial \xi }%
	(y,z,\theta _{j}\left( x,y,z\right) )\left( \theta _{j}\left( x,y,z\right)
	-w_{0}\left( x,y,z\right) \right) dxdydz\right\vert \leq
	\\
	c\left\Vert \theta _{j}-w_{0}\right\Vert _{B,\Omega \times Y\times
		Z}+
	\\
	c\alpha c^{\prime }+c\int_{\Omega }\int_{Y}\int_{Z}B\left( \frac{\left\vert
		\theta _{j}\left( x,y,z\right) -w_{0}\left( x,y,z\right) \right\vert }{
		\alpha }\right) dxdydz+c\alpha \frac{1}{2}B\left( 2\right) \mathcal L^N( \Omega) +
	\\
	c\alpha \int_{\Omega }\int_{Y}\int_{Z}\frac{1}{2}B\left( 4\left\vert \theta
	_{j}\left( x,y,z\right) -w_{0}\left( x,y,z\right) \right\vert \right)
	dxdydz+ 
	\\c\alpha \int_{\Omega }\int_{Y}\int_{Z}\frac{1}{2}B\left( 4\left\vert
	w_{0}\left( x,y,z\right) \right\vert \right) dxdydz.
	\end{align*}
	 Letting $j\rightarrow
	+\infty $ and taking into account the arbritness of $\ 0<\alpha <1$ we get $$
	\underset{j\rightarrow +\infty }{\lim \sup (}\int_{\Omega }\int_{Y}\int_{Z}%
	\frac{\partial f}{\partial \xi }(y,z,\theta _{j}\left( x,y,z\right) )\left(
	\theta _{j}\left( x,y,z\right) -w_{0}\left( x,y,z\right) \right) dxdydz)=0.$$
	The difference
	$\int_{\Omega }\int_{Y}\int_{Z}f(y,z,\theta _{j}\left( x,y,z\right)
	)dxdydz-\int_{\Omega }\int_{Y}\int_{Z}f(y,z,w_{0}\left( x,y,z\right) )dxdydz$
	can also be treated as done to estimate the difference of the second and third term in \eqref{3.4eq}, thus the result follows from passing to the limit on $j$.
\end{proof}

\begin{lemma}\label{lbstep1}
If $f$ satisfies hypotheses $\left( H_{1}\right) $ or $\left[ \left(
A_{1}\right) ,\left( A_{2}\right) ,\left( A_{3}\right) \right] ,\left(
H_{2}\right) ,\left( H_{3}\right) $ and $\left( H_{4}\right),$ and $F$ is the functional defined in \eqref{FGamma}, then $%
\mathcal{F}_{\left\{ \varepsilon \right\} }\left( u,\Omega \right) \geq
F\left( u,\Omega \right) ,$ for every $u\in W^{1}L^{B}\left( \Omega ;%
\mathbb{R}
^{d}\right) .$
\end{lemma}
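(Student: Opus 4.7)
The plan is to extract a recovery sequence from the infimum defining $\mathcal{F}_{\{\varepsilon\}}$, to exploit the coercivity in $(H_4)$ so as to upgrade mere $L^B$ convergence to $W^1L^B$ boundedness, to read off a reiterated two-scale limit of its gradients via Proposition \ref{compactnessW1LB}, and to close the argument by invoking the lower-semicontinuity Lemmas \ref{Lemma 3.4} or \ref{Lemma 3.3}.

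Concretely, I would fix $u\in W^1L^B(\Omega;\mathbb R^d)$ and assume $\mathcal{F}_{\{\varepsilon\}}(u,\Omega)<+\infty$, otherwise there is nothing to prove. For arbitrary $\eta>0$ definition \eqref{subGamma} supplies a sequence $(u_\varepsilon)_\varepsilon$ with $u_\varepsilon\to u$ in $L^B(\Omega;\mathbb R^d)$ and
\begin{equation*}
\liminf_{\varepsilon\to 0}\int_\Omega f\left(\tfrac{x}{\varepsilon},\tfrac{x}{\varepsilon^2},Du_\varepsilon\right)dx\leq \mathcal{F}_{\{\varepsilon\}}(u,\Omega)+\eta.
\end{equation*}
Since $F_\varepsilon$ is finite only on $W^1L^B$, each $u_\varepsilon$ belongs to that space, and by passing to a non-relabelled subsequence the $\liminf$ becomes a limit. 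The lower bound in $(H_4)$ combined with the norm inequalities \eqref{estB} then gives boundedness of $(Du_\varepsilon)_\varepsilon$ in $L^B(\Omega;\mathbb R^{d\times N})$, so $(u_\varepsilon)_\varepsilon$ is bounded in $W^1L^B(\Omega;\mathbb R^d)$. Proposition \ref{compactnessW1LB}, combined with Remark \ref{Cianchi} and applied componentwise, produces a further subsequence and fields $U\in L^B(\Omega;W^1_\# L^B(Y;\mathbb R^d))$ and $W\in L^B(\Omega;L^B_{per}(Y;W^1_\# L^B(Z;\mathbb R^d)))$ with
\begin{equation*}
Du_\varepsilon\overset{reit-2s}{\rightharpoonup} Du+D_y U+D_z W\quad\text{in }L^B(\Omega;\mathbb R^{d\times N}).
\end{equation*}
In view of the embedding $L^B\hookrightarrow L^1$ recalled in item (iii) of Section \ref{SOsp}, the pair $(U,W)$ is an admissible competitor in \eqref{FGamma}.

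Finally, I would apply Lemma \ref{Lemma 3.4} under hypothesis $(H_1)$, or Lemma \ref{Lemma 3.3} under $(A_1),(A_2),(A_3)$, with $w_\varepsilon:=Du_\varepsilon$ and $w_0:=Du+D_y U+D_z W$, to obtain
\begin{equation*}
\lim_{\varepsilon\to 0}\int_\Omega f\left(\tfrac{x}{\varepsilon},\tfrac{x}{\varepsilon^2},Du_\varepsilon\right)dx\geq \iiint_{\Omega\times Y\times Z} f(y,z,Du+D_y U+D_z W)\,dx\,dy\,dz\geq F(u,\Omega),
\end{equation*}
where the last inequality is simply the admissibility of $(U,W)$ in the infimum \eqref{FGamma}. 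Combining with the choice of $(u_\varepsilon)_\varepsilon$ and letting $\eta\to 0^+$ closes the argument. The main technical point, and the only place where care is needed, is the regularity upgrade used in the second step: a priori Proposition \ref{compactnessW1LB} would only place $(U,W)$ in $L^1$-valued classes of $W^1_\# L^B$-functions, which are not automatically admissible in \eqref{FGamma}, and the Poincar\'e-Wirtinger argument recorded in Remark \ref{Cianchi} is precisely what promotes them into the correct Orlicz-Sobolev spaces.
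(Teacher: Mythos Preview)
Your proposal is correct and follows essentially the same approach as the paper: extract a bounded sequence in $W^1L^B$ from the coercivity in $(H_4)$, invoke Proposition \ref{compactnessW1LB} (together with Remark \ref{Cianchi}) to identify the reiterated two-scale limit of the gradients, and conclude via Lemma \ref{Lemma 3.4} or Lemma \ref{Lemma 3.3} according to whether $(H_1)$ or $(A_1)$--$(A_3)$ is assumed. Your write-up is in fact more explicit than the paper's terse version, particularly in handling the $\eta$-approximation and in flagging the regularity issue for the admissibility of $(U,W)$ in \eqref{FGamma}.
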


\begin{proof}
	Let $%
	u_{\varepsilon }\rightharpoonup u$ in $W^{1}L^{B}\left( \Omega ;%
	\mathbb{R}
	^{d}\right) .$ Without loss of generality, assume that $\mathcal F_{\{\varepsilon\}}(u)<+\infty$, thus we can take a bounded sequence $(u_\varepsilon)$ and extracting a not relabelled subsequence, if necessary, we assume that $u_{\varepsilon }\overset{reit-2s}{\rightharpoonup } u, \; \; Du_{\varepsilon }\overset{reit-2s}{\rightharpoonup } D_{x}u+D_{y}U+D_{z}W,$ in $L^B(\Omega\times Y \times Z; \mathbb R^d)$ and $L^B(\Omega\times Y \times Z;\mathbb R^{d\times N})$, respectively, for
	some $U\in L^{1}\left( \Omega ;W_{\#}^{1}L^{B}\left( Y;%
	\mathbb{R}
	^{d}\right) \right),$  $W\in L^{1}\left( \Omega \times Y;W_{\#}^{1}L^{B}\left(
	Z;\mathbb R^d\right) \right) .$

	By \eqref{FGamma}, in the case in which $f$ satisfies $(H_1) ,\left(
H_{2}\right) ,\left( H_{3}\right) $ and $\left( H_{4}\right)$, it suffices to invoke lemma \ref{Lemma 3.4},  
while in the other case (when $(H_1)$ is replaced by $((A_{1}) ,(A_{2}) ,(A_{3}))$) one has to apply lemma \ref{Lemma 3.3} to get: $$\underset{\varepsilon
\rightarrow 0}{\lim \inf }\int_{\Omega }f\left(\tfrac{x}{\varepsilon },\tfrac{x}{%
\varepsilon ^{2}},Du_{\varepsilon }\right)dx\geq $$
$$\int_{\Omega }\int_{Y}\int_{Z}f(y,z,D_{x}u(x)+D_{y}U\left( x,y\right)
+D_{z}W\left( x,y,z\right) )dxdydz\geq F\left( u,\Omega \right) .$$
\end{proof}

\begin{lemma}\label{lbfhombar}
	If $f\ $\ satisfies $\left( H_{1}\right) $ or $\left[ \left( A_{1}\right)
	,\left( A_{2}\right) \right] ,\left( H_{2}\right) ,\left( H_{3}\right) $ and 
	$\left( H_{4}\right) ,\ \ $then $\mathcal{F}_{\left\{ \varepsilon \right\}
	}\left( u,\Omega \right) $ $ \geq \int_{\Omega }\overline f_{\hom }\left( Du\left(
	x\right) \right) dx$
for every $u \in W^1L^B(\Omega;\mathbb R^d)$.\end{lemma}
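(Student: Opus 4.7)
The plan is to combine Lemma \ref{lbstep1}, which already yields $\mathcal{F}_{\{\varepsilon\}}(u,\Omega) \ge F(u,\Omega)$, with a two-stage disintegration of the triple integral in \eqref{FGamma} --- first in the $z$-variable and then in the $y$-variable --- so as to recognize the infima defining $f_{\hom}$ and $\overline f_{\hom}$. Throughout the argument I would tacitly restrict attention to admissible pairs $(U,W)$ in \eqref{FGamma} for which the triple integral is finite, since otherwise there is nothing to prove.

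First, for any such $(U,W)$, the slice $W(x,y,\cdot)$ lies in $W^1L^B_{per}(Z;\mathbb R^d)$ for almost every $(x,y)\in\Omega\times Y$, simply because $W\in L^1(\Omega\times Y; W^1L^B_{per}(Z;\mathbb R^d))$. It is therefore a competitor in the definition \eqref{fshom} of $f_{\hom}(y,\xi)$ at $\xi=Du(x)+D_yU(x,y)$, which gives
\[
\int_Z f(y,z,Du(x)+D_yU(x,y)+D_zW(x,y,z))\,dz \;\ge\; f_{\hom}(y,Du(x)+D_yU(x,y)).
\]
Next, since $U(x,\cdot)\in W^1L^B_{per}(Y;\mathbb R^d)$ for a.e.\ $x\in\Omega$, integrating the preceding inequality over $y\in Y$ and recognizing $U(x,\cdot)$ as a competitor in \eqref{fshombar} at the frozen value $\xi=Du(x)$ yields
\[
\int_Y\!\int_Z f(y,z,Du(x)+D_yU(x,y)+D_zW(x,y,z))\,dz\,dy \;\ge\; \overline f_{\hom}(Du(x)).
\]
A final integration in $x\in\Omega$, followed by passage to the infimum over $(U,W)$, produces $F(u,\Omega)\ge \int_\Omega \overline f_{\hom}(Du(x))\,dx$; chaining with Lemma \ref{lbstep1} establishes the claim.

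The main issue to be handled carefully is measurability and the legitimacy of Fubini at each step. Measurability of the composite maps $(x,y)\mapsto f_{\hom}(y,Du(x)+D_yU(x,y))$ and $x\mapsto \overline f_{\hom}(Du(x))$ follows from the continuity of $f_{\hom}$ --- and, by an entirely analogous argument, of $\overline f_{\hom}$ --- guaranteed by Lemma \ref{fhomcontinuous}, together with the measurability of $Du$ and $D_yU$. The integrability needed for Fubini rests on the upper bound in $(H_4)$ and on the Orlicz integrability of $Du+D_yU+D_zW$ on $\Omega\times Y\times Z$, the latter being where the finiteness assumption on the triple integral, supplemented by arguments in the spirit of Remark \ref{Cianchi}, enters. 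This bookkeeping appears to be the only genuine obstacle, as the structural inequality is a direct two-step unfolding of the nested definitions \eqref{fshom}--\eqref{fshombar}.
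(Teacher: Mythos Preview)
Your argument is correct and follows exactly the route indicated by the paper: invoke Lemma \ref{lbstep1} to obtain $\mathcal{F}_{\{\varepsilon\}}(u,\Omega)\ge F(u,\Omega)$, then apply Fubini and recognize the inner integrals as competitors for the nested infima \eqref{fshom} and \eqref{fshombar}. The paper's own proof is a single sentence pointing to precisely these ingredients, so your write-up is simply a fleshed-out version of the same argument, with the measurability and integrability bookkeeping made explicit.
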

\begin{proof}
	The result follows by Lemma \ref{lbstep1}, \eqref{FGamma}, \eqref{fshom} and \eqref{fshombar}, by applying Fubini's theorem.
	\end{proof}

Following along the lines of \cite[Theorem 4.6]{FNE reit}, now we are in position to prove the opposite inequality.
\begin{lemma}\label{lemmaub}
If $f\ $\ satisfies $\left( H_{1}\right) $ or $\left[ \left( A_{1}\right)
,\left( A_{2}\right) \right] ,\left( H_{2}\right) ,\left( H_{3}\right) $ and 
$\left( H_{4}\right) ,\ \ $then $$\mathcal{F}_{\left\{ \varepsilon \right\}
}\left( u,\Omega \right) \leq \int_{\Omega }\overline f_{\hom }\left( Du\left(
x\right) \right) dx$$ for every $u \in W^1L^B(\Omega;\mathbb R^d)$.
\end{lemma}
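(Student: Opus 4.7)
The plan is to construct a recovery sequence realizing the bound, mirroring the strategy of \cite[Theorem 4.6]{FNE reit} and \cite[Theorem 1.8]{Elvira 1}, but adapted to the Orlicz--Sobolev setting via the $\Delta_2$ condition on $B$ and $\widetilde B$. I would first reduce to the case of affine $u$. Using the locality and measure-theoretic properties established in Lemma \ref{lemma3.3}, together with Proposition \ref{boundaryprop} which permits recovery sequences attaining the boundary datum, the inequality for general $u\in W^1L^B(\Omega;\mathbb R^d)$ follows by proving it first for piecewise affine functions on finite disjoint unions of cubes (pasting together the affine recovery sequences via the boundary-matching result) and then invoking density of piecewise affine functions in $W^1L^B(\Omega;\mathbb R^d)$, combined with the continuity of $\overline{f}_{\hom}$ (the analogue of Lemma \ref{fhomcontinuous}) and the growth condition $(H_4)$.

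For the affine case $u(x)=\xi\cdot x+b$ and fixed $\eta>0$: by the definition \eqref{fshombar} pick $\varphi\in W^1L^B_{per}(Y;\mathbb R^d)$ with
\begin{equation*}
\int_Y f_{\hom}(y,\xi+D\varphi(y))\,dy\le \overline{f}_{\hom}(\xi)+\tfrac{\eta}{2|\Omega|}.
\end{equation*}
Applying Proposition \ref{Aumannselection} (Aumann's principle) to the multifunction that assigns to each $y\in Y$ the collection of near-minimizers of \eqref{fshom} with $\xi$ replaced by $\xi+D\varphi(y)$, I would select a measurable $\psi(y,\cdot)\in W^1L^B_{per}(Z;\mathbb R^d)$ with
\begin{equation*}
\int_Z f\bigl(y,z,\xi+D\varphi(y)+D_z\psi(y,z)\bigr)\,dz\le f_{\hom}(y,\xi+D\varphi(y))+\tfrac{\eta}{2|\Omega|}.
\end{equation*}
After truncation, mollification and density approximation (which preserve the $L^B$-estimates thanks to $\Delta_2$), I may assume $\varphi\in\mathcal C^\infty_{per}(Y;\mathbb R^d)$ and $\psi\in\mathcal C(\overline Y;\mathcal C^\infty_{per}(Z;\mathbb R^d))$ up to an additional error controlled by $\eta$. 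Define
\begin{equation*}
u_\varepsilon(x):=u(x)+\varepsilon\,\varphi(x/\varepsilon)+\varepsilon^2\,\psi(x/\varepsilon,x/\varepsilon^2),
\end{equation*}
so that $u_\varepsilon\to u$ strongly in $L^B(\Omega;\mathbb R^d)$ and
\begin{equation*}
Du_\varepsilon(x)=\xi+D\varphi(x/\varepsilon)+D_z\psi(x/\varepsilon,x/\varepsilon^2)+\varepsilon\,D_y\psi(x/\varepsilon,x/\varepsilon^2).
\end{equation*}
The last (error) term is bounded in $L^\infty$ and vanishes uniformly as $\varepsilon\to 0$. The corollary after Proposition \ref{lemma2.2} (applied to the admissible test $(y,z)\mapsto f(y,z,\xi+D\varphi(y)+D_z\psi(y,z))$, which lies in $\mathcal C(\overline\Omega)\otimes\mathfrak X^{B,\infty}_{per}$ by the smoothing step) together with the continuity of $f$ and the $(H_4)$ growth condition gives
\begin{equation*}
\lim_{\varepsilon\to 0}\int_\Omega f\!\left(\tfrac{x}{\varepsilon},\tfrac{x}{\varepsilon^2},Du_\varepsilon\right)dx=\iiint_{\Omega\times Y\times Z}\!\!f(y,z,\xi+D\varphi(y)+D_z\psi(y,z))\,dx\,dy\,dz,
\end{equation*}
and by construction this is at most $\int_\Omega\overline{f}_{\hom}(\xi)\,dx+\eta$. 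Letting $\eta\to 0$ yields the affine upper bound.

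The main obstacle will be the limit passage under $f$ in the Orlicz setting. Specifically, absorbing the error $\varepsilon D_y\psi(x/\varepsilon,x/\varepsilon^2)$ inside $f$ requires an $L^B$-continuity estimate that relies on the $\Delta_2$ condition for both $B$ and $\widetilde B$ (the standard Lipschitz-in-$\xi$ control used in the $L^p$ case is replaced by the estimate $\partial_\xi f\le c(1+b(1+|\xi|))$ already exploited in Lemma \ref{Lemma 3.3}); similarly, the smoothing of $\varphi,\psi$ must converge in a norm strong enough to preserve the upper bound after composition with $f$, which is exactly what the space $\mathfrak X^{B,\infty}_{per}$ and density of $\mathcal C_{per}$ in $W^1L^B_{per}$ afford. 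Finally, the Aumann-selected corrector $\psi$ is only measurable in $y$, so careful truncation and dominated-convergence arguments (again using $\Delta_2$ to control $B(2|\cdot|)$ by $B(|\cdot|)$) are needed to render $\psi(x/\varepsilon,x/\varepsilon^2)$ an admissible strongly reiteratively two-scale convergent test sequence.
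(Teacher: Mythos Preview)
Your approach is correct and, at the local level, identical to the paper's: both build the recovery ansatz $u+\varepsilon\varphi(x/\varepsilon)+\varepsilon^{2}\psi(x/\varepsilon,x/\varepsilon^{2})$ with an Aumann-selected inner corrector $\psi$, smooth $\varphi$ and $\psi$, and absorb the $\varepsilon D_y\psi$ term using the Orlicz-continuity estimate $|\partial_\xi f|\le c(1+b(1+|\xi|))$ from Lemma~\ref{Lemma 3.3}. The difference is the globalization. The paper does \emph{not} reduce to affine targets: it uses Lemma~\ref{lemma3.3} to know that $\mathcal F_{\{\varepsilon\}}(u,\cdot)$ is a Radon measure absolutely continuous with respect to $\mathcal L^N$, and then bounds its Radon--Nikodym density at Lebesgue points $x_0$ of $Du$ (and of $B(|Du-Du(x_0)|)$) by constructing the recovery sequence directly on shrinking cubes $Q(x_0,\delta)$ with the \emph{given} $u$, replacing $Du(x)$ by $Du(x_0)$ via the Lebesgue-point property and the same Orlicz estimates. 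Your route instead proves the bound for affine $u$ on a cube, glues to piecewise-affine via Proposition~\ref{boundaryprop}, and passes to general $u$ by $W^1L^B$-density plus a diagonal argument for the $\Gamma$-limsup together with the continuity of $\xi\mapsto\overline f_{\hom}(\xi)$ and $(H_4)$. The blow-up route is shorter because it avoids both the gluing step and the density/diagonalization; your route is somewhat more elementary in that the two-scale limit of $F_\varepsilon(u_\varepsilon)$ is computed only in the one case (affine $u$, smooth correctors) where it reduces to a straightforward periodic average.
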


\begin{proof} 
	Consider any subsequence of $(\varepsilon)$, (not relabelled) such that the $\Gamma$-limit $\mathcal F_{\{\varepsilon\}}$ exists. 
	By Lemma \ref{lemma3.3}, we know that $\mathcal{F}_{\left\{ \varepsilon \right\} }\left( u,\cdot\right) 
	$ is the trace on $\mathcal{A}\left( \Omega \right) $ of a Radon measure
	absolutely continous with respect to the $N$ dimensional Lebesgue measure $%
	\mathcal{L}^{N}$.  Thus to achieve the result, it is enough to prove that for any fixed $u\in W^{1}L^{B}\left( \Omega ;
\mathbb{R}
^{d}\right) ,$   
\begin{align*}
\underset{\delta \rightarrow 0^{+}}{\lim }\frac{\mathcal{F}_{\left\{
\varepsilon \right\} }\left( u,Q\left( x_{0},\delta \right) \right) }{\delta
^{N}}\leq \overline{f_{\hom }}\left( Du\left( x_{0}\right) \right) , \hbox{ for
a.e }x_{0}\in \Omega .
\end{align*}
Let $x_{0}\in \Omega,$ be a Lebesgue point for $u$, $Du$ and assume that
\begin{eqnarray*}
\underset{\delta \rightarrow 0^{+}}{\lim }\frac{1}{\delta ^{N}}\int_{Q\left(
x_{0},\delta \right) }\left\vert B\left( \left\vert Du\left( x\right)
-Du\left( x_{0}\right) \right\vert \right)\right\vert dx &=& 0.
\end{eqnarray*}
Fix $\alpha >0,$ and using the definition of $\overline{f_{\hom }}\left(
Du\left( x_{0}\right) \right) $ choose $\varphi \in W^{1}L_{per}^{B}\left( Y;%
\mathbb{R}
^{d}\right) $ such that $\overline{f_{\hom }}\left( Du\left( x_{0}\right)
\right) +\alpha >\int_{Y}f_{\hom }\left(y,Du\left( x_{0}\right) +D\varphi
\left( y\right) \right) dy.$ Since $\mathcal{C}_{per}^{\infty }\left( Y,%
\mathbb{R}
^{d}\right) $ is dense in $W^{1}L_{per}^{B}\left( Y;
\mathbb{R}^{d}\right) $ and $f_{\hom }$ is continous (see Lemma \ref{fhomcontinuous}), one can take $\varphi \in 
\mathcal{C}_{per}^{\infty }\left( Y,
\mathbb{R}^{d}\right) $\ such that: $\overline{f_{\hom }}\left( Du\left( x_{0}\right)
\right) +\alpha \geq \int_{Y}f_{\hom }\left( y,Du\left( x_{0}\right)
+D\varphi \left( y\right) \right) dy.$ In order to apply Proposition \ref{Aumannselection}
with $\left( X,\mathcal{M}\right) :=\left( \Omega ,\mathcal{L}\right)
,S:=W^{1}L_{per}^{B}\left( Y;
\mathbb{R}
^{d}\right) ,\mu $ the Lebesgue measure, and $\mathcal{L}$  the $%
\sigma -$algebra of Lebesgue measurable sets in $
\mathbb{R}
^{N},$ we introduce the multi-valued map 
\begin{align*}
H:\Omega \rightarrow \left\{ C\subset W^{1}L_{per}^{B}\left( Z;%
\mathbb{R}
^{d}\right) :C\neq \emptyset, C \hbox{ is closed}\right\}  \hbox{ such that}\\ 
x\longmapsto H\left( x\right) :=\left\{ \psi \in W^{1}L_{per}^{B}\left( Z;%
\mathbb{R}
^{d}\right) :\int_{Z}\psi \left( y\right) dy=0,\right.  \\ 
\left. f_{\hom }\left( x,Du\left( x_{0}\right) +D_{y}\varphi \left(
x\right) \right) +\alpha \geq \int_{Z}f\left( x,z,Du\left( x_{0}\right)
+D\varphi \left( x\right) +D\psi \left( z\right) \right) dz\right\}.
\end{align*}

Exploiting the properties of Sobolev-Orlicz spaces, and the definition of $f_{\hom}$, we can prove that the set $H\left( x\right) $ is non empty and closed. 
Indeed, let $\psi _{1}\in W^{1}L_{per}^{B}\left( Z;\mathbb{R}^{d}\right) ,$ set $\psi _{2}=\psi _{1}-\int_{Z}\psi _{1}dz,$ we have $%
\int_{Z}\psi _{2}dz=0$ and $\emptyset \neq H_{2}:=\left\{ \psi \in
W^{1}L_{per}^{B}\left( Z;\mathbb{R}
^{d}\right) :\int_{Z}\psi \left( z\right) dz=0\right\}.$ 
Moreover, for $\psi $ in $H_{2}$,
$\left\vert \int_{Z}\psi \left( z\right)
dz\right\vert \leq c\left\Vert \psi \right\Vert _{_{B}}\leq c\left\Vert \psi
\right\Vert _{W^{1}L_{per}^{B}\left( Z;\mathbb{R}
^{d}\right) }$ and $H_{2}$ is closed as $u\rightarrow \int_{Z}udz$ is linear
and continous.
Next, from definition of \ $f_{\hom }$ given $\alpha >0,\exists \psi _{1}\in
W^{1}L_{per}^{B}\left( Z;%
\mathbb{R}^{d}\right) $ such that:\hfill

\noindent $f_{\hom }\left( x,Du\left( x_{0}\right) +D\varphi
\left( x\right) \right) +\alpha >\int_{Z}f\left( x,z,Du\left( x_{0}\right)
+D\varphi \left( x\right) +D_{z}\psi _{1}\left( z\right) \right) dz.$ 

\noindent Clearly, since $%
D_{z}\psi _{1}\left( z\right) =D\left( \psi _{1}\left( z\right)
-\int_{Z}\psi _{1}\left( z\right) dz\right),$ there exist $\psi _{2}\in H_{2}
$ such that

\noindent $f_{\hom }\left( x,Du\left( x_{0}\right) +D\varphi \left( x\right)
\right) +\alpha >\int_{Z}f\left( x,z,Du\left( x_{0}\right) +D\varphi \left(
x\right) +D_{z}\psi _{2}\left( z\right) \right) dz$ hence 
$H\left( x\right) \neq
\emptyset .$  Moreover, considering $g_x: W^{1}L_{per}^{B}\left( Z;
\mathbb{R}^{d}\right) \rightarrow
\mathbb{R}$, defined as
\begin{equation*}
g_{x}:\psi \to :f_{\hom }\left( x,Du\left( x_{0}\right) +D_{y}\varphi
\left( x\right) \right) +\alpha -\int_{Z}f\left( x,z,Du\left( x_{0}\right)
+D\varphi \left( x\right) +D\psi \right) dz,
\end{equation*}
it results that, for every $n \in \mathbb N$,
\begin{align*}
\left\vert g_{x}\left( \psi _{1}\right) -g_{x}\left( \psi _{n}\right)
\right\vert =\\
\left\vert \int_{Z}f\left( x,Du\left( x_{0}\right) +D\varphi \left(
x\right) +D\psi _{1}\right) dz-\int_{Z}f\left( x,Du\left( x_{0}\right)
+D\varphi \left( x\right) +D\psi _{n}\right) dz\right\vert \\\leq 
\int_{Z}\left\vert f\left( x,Du\left( x_{0}\right) +D\varphi \left(
x\right) +D\psi _{1}\right) -f\left( x,Du\left( x_{0}\right) +D\varphi
\left( x\right) +D\psi _{n}\right) \right\vert dz\\
\leq c\int_{Z}\frac{1+B\left( 2\left( 1+\left\vert \lambda \right\vert
+\left\vert \mu \right\vert \right) \right) }{1+\left\vert \lambda
\right\vert +\left\vert \mu \right\vert }\left\vert \lambda -\mu \right\vert
dz\leq c\int_{Z}\left( 1+b\left( 1+\left\vert \lambda \right\vert
+\left\vert \mu \right\vert \right) \right) \left\vert \lambda -\mu
\right\vert dz
\\
\leq c\int_{Z}\left\vert \lambda -\mu \right\vert dz+c\int_{Z}\left( \alpha
b\left( 1+\left\vert \lambda \right\vert +\left\vert \mu \right\vert \right)
\right) \frac{\left\vert \lambda -\mu \right\vert }{\alpha }dz\leq
c2\left\Vert 1\right\Vert _{\widetilde{_{B}}}\left\Vert \left\vert \lambda
-\mu \right\vert \right\Vert _{_{B}}\\
+\alpha c\int_{Z}\widetilde{B}\left( b\left( 1+\left\vert \lambda
\right\vert +\left\vert \mu \right\vert \right) \right) dz+c\int_{Z}B\left( 
\frac{\left\vert \lambda -\mu \right\vert }{\alpha }\right) dz \\\leq
c2\left\Vert 1\right\Vert _{\widetilde{_{B}}}\left\Vert \left\vert \lambda
-\mu \right\vert \right\Vert _{_{B}}+c\alpha \\
+ c\int_{Z}B\left( \frac{\left\vert \lambda -\mu \right\vert }{\alpha }%
\right) dz+\alpha c\int_{Z}B\left( \left( 1+\left\vert \lambda \right\vert
+\left\vert \mu \right\vert \right) \right) dz\leq c2\left\Vert 1\right\Vert
_{\widetilde{_{B}}}\left\Vert \left\vert \lambda -\mu \right\vert
\right\Vert _{_{B}}+c\alpha \\+
c\int_{Z}B\left( \frac{\left\vert \lambda -\mu \right\vert }{\alpha }%
\right) dz+\alpha c\int_{Z}\frac{1}{2}B\left( 2\right) +\frac{1}{4}B\left(
4\left\vert \lambda \right\vert \right) +\frac{1}{4}B\left( 4\left\vert \mu
\right\vert \right) dz.
\end{align*}
where
$0<\alpha <1,\lambda :=Du\left( x_{0}\right) +D\varphi \left( x\right) +D\psi
_{1}\left( z\right)$,  $\mu :=Du\left( x_{0}\right) +D\varphi \left( x\right)
+D\psi _{n}\left( z\right)$.

If $\psi_n$ is such that $\left\Vert \psi _{1}-\psi
_{n}\right\Vert _{_{B}}\rightarrow 0$ as $n\rightarrow +\infty ,$ we get that
the right hand side goes to $0$, passing to limit on $n$ and using the arbritness of $0<\alpha <1$, thus, due to the metrizability of the spaces, $
g_x$ is continous and $g_{x}^{-1}\left( \left[ 0,+\infty \right[ \right) $ is
closed. Therefore, $H\left( x\right) =H_{2}\cap g_x^{-1}\left( \left[
0,+\infty \right[ \right) $ is closed.

Also, the set $\left\{ \left( x,\psi \right) \in \Omega \times
W^{1}L_{per}^{B}\left( Z;%
\mathbb{R}
^{d}\right) :\psi \in H\left( x\right) \right\} $ is closed hence Borel measurable, since it coincides with $\mathcal{H}^{-1}\left( \left[ 0,+\infty \right[
\times \left\{ 0\right\} \right) ,$ where, 

\noindent $\mathcal{H}\left( x,\psi \right)
:= \left( g_{x}\left( \psi \right) ,\int_{Z}\psi \left( z\right) dz\right) .$
The topological and measurability properties follow by an estimate entirely analogous to the previous ones, consisting in showing the continuity of $g_x(\psi)$ in $(x,\psi)$.

By Proposition \ref{Aumannselection} we have $H\left( x\right) =\overline{\left\{ h_{n}:n\in \mathbb{N}
\right\} }.$ Let $\psi$ be a mesuarable selection, $x \in Y\rightarrow
\psi \left( x,\cdot\right) \in W^{1}L_{per}^{B}\left( Z;
\mathbb{R}^{d}\right), $ such that
$\int_{Z}\psi dz=0$ and 

\noindent $f_{\hom }\left( x,Du\left( x_{0}\right)
+D\varphi \left( x\right) \right) +\alpha \geq \int_{Z}f\left(
x,y, Du\left( x_{0}\right) +D\varphi \left( x\right) +D_y\psi(x,y) \right) dy.$

Let $\psi _{1}\in W^{1}L_{per}^{B}\left( Z;
\mathbb{R}^{d}\right).$ 
It results that $$
f_{\hom }\left( x,Du\left( x_{0}\right) +D\varphi \left( x\right)
\right) \leq \int_{Z}f\left( x,y,Du\left( x_{0}\right) +D\varphi \left(
x\right) +D_y\psi _{1}(x,y)\right) dy$$ hence 
\begin{align*}
\int_{Y}f_{\hom }\left( y,Du\left(
x_{0}\right) +D\varphi \left( y\right) \right) dy\leq
\int_{Y}\int_{Z}f\left(y, z,Du\left( x_{0}\right) +D\varphi \left( z\right)
+D_z\psi _{1}(y,z)\right) dzdy\\
\leq C\int_{Y}\int_{Z}\left( 1+B\left( \left\vert Du\left( x_{0}\right)
+D\varphi \left( y\right) +D_z\psi _{1}(y,z)\right\vert \right) \right)
dzdy<+\infty.
\end{align*}

Thus 
\begin{align*}
+\infty >\int_{Y}\left[ f_{\hom }\left( y,Du\left( x_{0}\right)
+D\varphi \left( y\right) \right) +\alpha \right] dy\\
\geq \int_{Y}\int_{Z}f\left( y,Du\left( x_{0}\right) +D\varphi \left(
y\right) +D_{z}\psi \left( z\right) \right) dydz\\
\geq c_1\iint_{Y\times Z}B\left( \left\vert Du\left( x_{0}\right) +D\varphi
\left( y\right) +D\psi \left( z\right) \right\vert \right) dydz\geq
c_1\iint_{Y\times Z}4B\left( \left\vert \frac{D\psi \left( z\right) }{4}%
\right\vert \right) dydz\\
-c_1\iint_{Y\times Z}2B\left( \left\vert \frac{Du\left( x_{0}\right) }{2}%
\right\vert \right) dydz-c_1\int_{Y}\int_{Z}4B\left( \left\vert D\varphi
\left( y\right) \right\vert \right) dydz.
\end{align*}

Since $\varphi $ is regular, $D\varphi $ is bounded, and we get, exploiting the convexity properties of $B$,
\begin{equation*}
 \iint_{Y\times Z}B\left\vert \frac{D_{y}\left( \psi \left( x,y\right)
\right) }{4}\right\vert dxdy< +\infty. 
\end{equation*}


Therefore 
\begin{eqnarray*}
D_{y}\psi  &\in &L^{B }\left( Y\times Z\right) \hookrightarrow
L^{1}\left( Y\times Z\right) =L^{1}\left( Y;L^{1}\left( Z\right) \right) , \\
\text{ with }x &\mapsto &\psi \left( x,\cdot\right) \text{ from }Y\text{ to }%
W^{1}L^{B}\left( Z\right) \text{\ measurale.}
\end{eqnarray*}%

Moreover, since $B$ and $\tilde B$ satisfy $\triangle_2$ condition, it is well known that  
\begin{equation*}
D_{y}\psi \in L^B\left( Y\times Z\right)
\Longrightarrow D_{y}\psi \in L^{B}\left( Y;L^{B }\left( Z\right)
\right).
\end{equation*}
On the other hand, since
\begin{equation*}
\psi \left( x,\cdot\right) \in W^{1}L^{B}\left( Y\right) \Longrightarrow
D_{y}\left( \psi \left( x,\cdot\right) \right) \in L^{B}\left( Z\right)
\Longrightarrow \int_YB\left\vert D_{y}\left( \psi \left( x,y\right)
\right) \right\vert dy <+\infty .
\end{equation*}%
We also have 
\begin{equation*}
\psi \left( x,\cdot\right) \in L^{B}\left( Y\right) \text{ with}\int_Y \psi \left(
x,\cdot\right) dy=0.
\end{equation*}%
Then Poincar\'{e}-Wirtinger's inequality gives 
\begin{equation*}
\left\Vert \psi \left( x,\cdot\right) \right\Vert _{B,Y}\leq c\left\Vert
D_{y}\psi \left( x,\cdot\right) \right\Vert _{B,Y}
\end{equation*}%
\noindent Moreover, exploiting Fubini's Theorem and \eqref{estB}
we can conclude that $\int_Y\left\Vert D_{y}\psi \left(
x,\cdot \right) \right\Vert _{B, Z}dx< +\infty .$
So far, we deduce that
$\psi \in L^{1}\left( Y;W^{1}L_{per}^{B}\left( Z;%
\mathbb{R}
^{d}\right) \right) .$ More precisely, as in Remark \ref{Cianchi}, we have $\psi \in L^B\left( Y;W^{1}L_{per}^{B}\left( Z;
\mathbb{R}
^{d}\right) \right) .$
Now let $\psi _{k}\in \mathcal{C}_{c}^{\infty }\left(
Y;W^{1}L_{per}^{B}\left( Z;
\mathbb{R}
^{d}\right) \right) $ be such that $\left\Vert \psi _{k}-\psi \right\Vert
_{L^{1}\left( Y;W^{1}L_{per}^{B}\left( Z;
\mathbb{R}^{d}\right) \right) }\rightarrow 0$. Extend $\psi _{k}\ \ $periodically and
define $u_{k,\varepsilon }:=u\left( x\right) +\varepsilon \varphi \left( 
\frac{x}{\varepsilon }\right) +\varepsilon ^{2}\psi _{k}\left( \frac{x}{%
\varepsilon },\frac{x}{\varepsilon ^{2}}\right) .$ 

\noindent For fixed $\delta >0,$ it
is clear that $u_{k,\varepsilon }\rightarrow u$ in $L^{B}\left( Q\left(
x_{0},\delta \right) \right) $ and so, 

\begin{align}
\underset{\delta \rightarrow 0^{+}}{\lim }\frac{\mathcal{F}_{\left\{ \varepsilon \right\} }\left( u,Q\left(
x_{0},\delta \right) \right) }{\delta ^{N}}\leq \nonumber 
\\
\underset{\delta \rightarrow 0^{+}}{\lim }\underset{\varepsilon \rightarrow 0}{\lim \inf }\frac{1}{\delta
^{N}}\int_{Q\left( x_{0},\delta \right) }f\left( \frac{x}{\varepsilon },
\frac{x}{\varepsilon ^{2}}, Du\left( x\right)
+D_{y}\varphi \left( \frac{x}{\varepsilon }\right) +\varepsilon D_{y}\psi
_{k}\left( \frac{x}{\varepsilon },\frac{x}{\varepsilon ^{2}}\right)
+D_{z}\psi _{k}\left( \frac{x}{\varepsilon },\frac{x}{\varepsilon ^{2}}%
\right) \right) dx \nonumber
\\
\leq \underset{\delta \rightarrow 0^{+}}{\lim }\underset{\varepsilon
\rightarrow 0}{\lim }\frac{1}{\delta ^{N}}\int_{Q\left( x_{0},\delta \right)
}f\left( \frac{x}{\varepsilon },\frac{x}{\varepsilon ^{2}},Du\left(
x_{0}\right) +D_{y}\varphi \left( \frac{x}{\varepsilon }\right) +D_{z}\psi
_{k}\left( \frac{x}{\varepsilon },\frac{x}{\varepsilon ^{2}}\right) \right)
dx \label{estiub}\\
+ c\underset{\delta \rightarrow 0^{+}}{\lim }\underset{\varepsilon
\rightarrow 0}{\lim \sup } 
\frac{1}{\delta ^{N}}\int_{Q\left( x_{0},\delta \right) }\left( 1+b\left(
1+\left\vert \lambda \right\vert +\left\vert \mu \right\vert \right) \right)
\left( \left\vert Du\left( x\right) -Du\left( x_{0}\right) \right\vert
+\varepsilon \left\vert D_{y}\psi _{k}\left( \frac{x}{\varepsilon },\frac{x}{%
\varepsilon ^{2}}\right) \right\vert \right) dx \nonumber \\
\nonumber
=\int_{Y}\int_{Z}f\left(y ,z,Du\left( x_{0}\right) +D_{y}\varphi \left(
x\right) +D_{z}\psi _{k}(y,z)\right)dy dz, 
\end{align}
where 
\begin{align}\label{defmulambda}
\lambda :=Du\left( x_{0}\right) +D_{y}\varphi \left( \frac{x}{\varepsilon }%
\right) +D_{z}\psi _{k}\left( \frac{x}{\varepsilon },\frac{x}{\varepsilon
^{2}}\right), \\
\mu =Du\left( x\right)
+D_{y}\varphi \left( \frac{x}{\varepsilon }\right) +\varepsilon D_{y}\psi
_{k}\left( \frac{x}{\varepsilon },\frac{x}{\varepsilon ^{2}}\right)
+D_{z}\psi _{k}\left( \frac{x}{\varepsilon },\frac{x}{\varepsilon ^{2}}%
\right). \nonumber
\end{align}

Indeed the latter limit can be obtained by $\left( H_{3}\right) $ and $\left( H_{4}\right) $, and exploiting the same arguments as in \cite{Focardi}  (see \cite[Proposition 3.1]{fotso nnang 2012}). Precisely it results that 
\begin{align*}
f\left( x,y,\lambda \right) \leq f\left(
x,y,\mu \right) +c\frac{B\left( 2\left( 1+\left\vert \lambda \right\vert
	+\left\vert \mu \right\vert \right) \right) }{1+\left\vert \lambda
	\right\vert +\left\vert \mu \right\vert }\left\vert \lambda -\mu \right\vert 
\\
\leq f\left( x,y,\mu \right) +c\left( 1+b\left( 1+\left\vert \lambda
\right\vert +\left\vert \mu \right\vert \right) \right) \left\vert \lambda
-\mu \right\vert ,
\end{align*}


Hence for $0<\alpha <1$, and for any open set $O$,
\begin{align*}\int_O f\left( x,y,\lambda \right) dx\leq
\int_Of\left( x,y,\mu \right) dx+c\int_O\alpha \frac{%
	\left\vert \lambda -\mu \right\vert }{\alpha }dx\\
+c\int_O\alpha b\left( 1+\left\vert \lambda \right\vert
+\left\vert \mu \right\vert \right) \frac{\left\vert \lambda -\mu
	\right\vert }{\alpha }dx\leq \int_Of\left( x,y,\mu \right)
dx+c\int_O B\left( \frac{\left\vert \lambda -\mu \right\vert }{\alpha 
}\right) dx\\
+c\int_O \widetilde{B}\left( \alpha \right) dx+c^{\prime
}\int_O B\left( \frac{\left\vert \lambda -\mu \right\vert }{\alpha }%
\right) dx+c\alpha \int_O\widetilde{B}\left( b\left( 1+\left\vert
\lambda \right\vert +\left\vert \mu \right\vert \right) \right) dx\leq
\int_O f\left( x,y,\mu \right) dx \\
+c\int_O B\left( \frac{\left\vert \lambda -\mu \right\vert }{\alpha }%
\right) dx+c\int_O\widetilde{B}\left( \alpha \right) dx+c^{\prime
}\int_O B\left( \frac{\left\vert \lambda -\mu \right\vert }{\alpha }%
\right) dx\\
+c\alpha \int_{O_{1}}\widetilde{B}\left( b\left( 1+\left\vert \lambda
\right\vert +\left\vert \mu \right\vert \right) \right) dx+c\alpha
\int_{O _{2}}\widetilde{B}\left( b\left( 1+\left\vert \lambda
\right\vert +\left\vert \mu \right\vert \right) \right) dx.
\end{align*}

Then, if $O_{1}:=\left\{ x\in O :1+\left\vert \lambda \left( x\right)
\right\vert +\left\vert \mu \left( x\right) \right\vert >t_{0}\right\}, O_{2}:=O \setminus  O_{1}$ with $\widetilde{B}\left( b\left(
t\right) \right) \leq kB\left( t\right) ,t>t_{0}$, we have 
\begin{align}\int_{O}
f\left( x,y,\lambda \right) dx\leq \int_{O }f\left( x,y,\mu \right) dx \label{estiO}\\
+c\int_{O}B\left( \frac{\left\vert \lambda -\mu \right\vert }{\alpha }%
\right) dx+c\int_{O }\widetilde{B}\left( \alpha \right) dx+c\alpha
\int_{O_{2}}\widetilde{B}\left( t_{0}\right) dx+kc\alpha \int_{O
	_{1}}B\left( 1+\left\vert \lambda \right\vert +\left\vert \mu \right\vert
\right) dx \nonumber\\
\leq \int_{O}f\left( x,y,\mu \right) dx+
c\int_{O }B\left( \frac{\left\vert \lambda -\mu \right\vert }{\alpha }%
\right) dx+c\int_{O }\widetilde{B}\left( \alpha \right) dx+c\alpha
c^{\prime \prime \prime }+kc\alpha \int_{O }B\left( 1+\left\vert
\lambda \right\vert +\left\vert \mu \right\vert \right) dx. \nonumber
\end{align}  
Now applying the chain of inequalities in \eqref{estiO}, at the level of the second inequality in \eqref{estiub}, with $O=Q(x_\delta), \lambda$ and $\mu$ as in \eqref{defmulambda}, passing to the limit as $\varepsilon$ and $\delta$ go to $0$, we obtain the desired esimate. Indeed, in the first term in \eqref{estiO} becomes the desired one, the second term will go to $0$ as $\varepsilon \to 0$ and $\delta \to 0$ and the last summands go to $0$ afterwards by the arbitrariness of $\alpha$, i.e. letting $\alpha \rightarrow 0$.


We have $$\underset{\delta \rightarrow 0^{+}}{\lim }\frac{\mathcal{F}%
_{\left\{ \varepsilon \right\} }\left( u,Q\left( x_{0},\delta \right)
\right) }{\delta ^{N}}\leq \int_{Y}\int_{Z}f\left( y,z,Du\left( x_{0}\right)
+D_{y}\varphi \left( y\right) +D_{z}\psi _{k}(y,z)\right) dydz.$$
Thus, sending $k \to +\infty$, exploiting the growth conditions on $f$, we have




\begin{align*}
\underset{\delta \rightarrow 0^{+}}{\lim }\frac{\mathcal{F}_{\left\{
\varepsilon \right\} }\left( u,Q\left( x_{0},\delta \right) \right) }{\delta
^{N}} \\ 
\leq \int_{Y}\int_{Z}f\left( y,z, Du\left( x_{0}\right) +D_{y}\varphi \left(
y\right) +D_{z}\psi (y,z)\right) dzdy \\ 
\leq \int_{Y}\left[ f_{\hom }\left( y,Du\left( x_{0}\right) +D_{y}\varphi \left(
y\right) \right) +\alpha \right] dy\leq \overline{f_{\hom }}\left( Du\left(
x_{0}\right) \right) +2\alpha.
\end{align*}
Thus, the arbritness of $\alpha $ gives the result.

\end{proof}

	We observe that the following Proposition, which extend to the Orlicz-Sobolev spaces, \cite[lemma 4.2]{Elvira 1}, can be proven. The proof develops along the lines of the above result, relying in turn on assumption $\left( H_{4}\right) $, approximation by means of regular functions, Lemma \ref{lemma2.2}, and dominated convergence theorem, hence the proof is omitted.
	
	\begin{proposition}\label{upbtwos}
	If $f$ satisfies hypotheses $\left( A_{1}\right) ,\left( A_{2}\right) ,\left(
	H_{2}\right) ,\left( H_{3}\right) $ and $\left( H_{4}\right) ,$ then $%
	\mathcal{F}_{\left\{ \varepsilon \right\} }\left( u,\Omega \right) $ $\leq
	F\left( u,\Omega \right) $ for every $u\in $ $W^{1}L^{B}\left( \Omega ;%
	\mathbb{R}
	^{d}\right) .$
	\end{proposition}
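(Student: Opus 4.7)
Fix $u\in W^{1}L^{B}(\Omega;\mathbb{R}^{d})$; we may assume $F(u,\Omega)<+\infty$, otherwise there is nothing to prove. The strategy is the classical $\Gamma$-limsup construction: for every $\eta>0$, use the definition \eqref{FGamma} to select admissible correctors $U_{\eta}\in L^{1}(\Omega;W^{1}L^{B}_{per}(Y;\mathbb{R}^{d}))$ and $W_{\eta}\in L^{1}(\Omega\times Y;W^{1}L^{B}_{per}(Z;\mathbb{R}^{d}))$ such that
\begin{equation*}
\iiint_{\Omega\times Y\times Z}f\bigl(y,z,Du(x)+D_{y}U_{\eta}(x,y)+D_{z}W_{\eta}(x,y,z)\bigr)\,dx\,dy\,dz\leq F(u,\Omega)+\eta.
\end{equation*}
The coercivity in $(H_4)$ ensures $Du+D_{y}U_{\eta}+D_{z}W_{\eta}\in L^{B}(\Omega\times Y\times Z;\mathbb{R}^{d\times N})$, which gives the room to perform density arguments in the Orlicz-Sobolev setting.

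The second step is to regularize $U_{\eta}$ and $W_{\eta}$. Since $B$ and $\widetilde{B}$ satisfy $\triangle_{2}$, $\mathcal{C}^{\infty}_{c}(\Omega;\mathcal{C}^{\infty}_{per}(Y;\mathbb{R}^{d}))$ is dense in $L^{B}(\Omega;W^{1}L^{B}_{per}(Y;\mathbb{R}^{d}))$, and similarly for $W_{\eta}$ with respect to $Y\times Z$. Pick smooth $U_{k}$ and $W_{k}$ approximating $U_{\eta}$ and $W_{\eta}$, and define the recovery sequence
\begin{equation*}
u_{k,\varepsilon}(x):=u(x)+\varepsilon\, U_{k}\!\left(x,\tfrac{x}{\varepsilon}\right)+\varepsilon^{2}W_{k}\!\left(x,\tfrac{x}{\varepsilon},\tfrac{x}{\varepsilon^{2}}\right).
\end{equation*}
Then $u_{k,\varepsilon}\to u$ in $L^{B}(\Omega;\mathbb{R}^{d})$ and a direct differentiation gives
\begin{equation*}
Du_{k,\varepsilon}(x)=Du(x)+D_{y}U_{k}\!\left(x,\tfrac{x}{\varepsilon}\right)+D_{z}W_{k}\!\left(x,\tfrac{x}{\varepsilon},\tfrac{x}{\varepsilon^{2}}\right)+R_{k,\varepsilon}(x),
\end{equation*}
where $R_{k,\varepsilon}$ collects the order-$\varepsilon$ remainders coming from $\varepsilon D_{x}U_{k}$, $\varepsilon D_{y}W_{k}$, $\varepsilon^{2}D_{x}W_{k}$; these are uniformly bounded in $L^{\infty}$ because $U_{k}$, $W_{k}$ are smooth and compactly supported in $x$.

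The third step is to pass to the limit in $\int_{\Omega}f(x/\varepsilon,x/\varepsilon^{2},Du_{k,\varepsilon})\,dx$. The key tool is Lemma \ref{lemma2.2} together with the observation that, since $U_{k}$ and $W_{k}$ are smooth in $x$ and continuous periodic in $y,z$, the map $(y,z)\mapsto f(y,z,Du(\cdot)+D_{y}U_{k}(\cdot,y)+D_{z}W_{k}(\cdot,y,z))$ belongs to the admissible test class $\mathcal{C}(\overline{\Omega})\otimes \mathfrak{X}^{B,\infty}_{per}(\mathbb{R}^{N}_{y};\mathcal{C}_{b}(\mathbb{R}^{N}_{z}))$ of Lemma \ref{lemma2.2}, thanks to $(A_{1})$, $(A_{2})$, $(H_{2})$ and the upper bound in $(H_{4})$. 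The main obstacle is handling $R_{k,\varepsilon}$: using the convexity-growth estimate
\begin{equation*}
|f(y,z,\lambda)-f(y,z,\mu)|\leq c\,\bigl(1+b(1+|\lambda|+|\mu|)\bigr)|\lambda-\mu|
\end{equation*}
(which was the workhorse in Lemma \ref{lemmaub}), combined with Young's inequality in $\widetilde{B}$ and the fact that $R_{k,\varepsilon}=O(\varepsilon)$ in $L^{\infty}$, one absorbs the remainder into an $o(1)$ term as $\varepsilon\to 0$, just as in the last lines of the proof of Lemma \ref{lemmaup} from \eqref{defmulambda}.

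Finally, send $\varepsilon\to 0$, then $k\to\infty$ (using dominated convergence governed by $(H_{4})$ and the smoothness of $U_{k},W_{k}$), and then $\eta\to 0$, obtaining
\begin{equation*}
\mathcal{F}_{\{\varepsilon\}}(u,\Omega)\leq \iiint_{\Omega\times Y\times Z}f(y,z,Du+D_{y}U_{\eta}+D_{z}W_{\eta})\,dx\,dy\,dz\leq F(u,\Omega)+\eta,
\end{equation*}
which yields the claim. The hardest point is ensuring that the approximating smooth correctors still realize the infimum defining $F(u,\Omega)$ up to a vanishing error in the Orlicz-Sobolev topology; this is where the $\triangle_{2}$ condition on both $B$ and $\widetilde{B}$, together with the estimates already used in Lemma \ref{Lemma 3.3}, become indispensable.
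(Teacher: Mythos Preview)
Your overall strategy coincides with what the paper indicates (the paper omits the proof and only says it ``develops along the lines of'' Lemma~\ref{lemmaub}, ``relying in turn on assumption $(H_4)$, approximation by means of regular functions, Lemma~\ref{lemma2.2}, and dominated convergence theorem''): pick near-optimal correctors, regularize, build the oscillating recovery sequence, kill the $O(\varepsilon)$ remainders with the convexity--growth estimate, and pass to the limit.

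There is, however, one point you should tighten. You assert that the map
\[
(x,y,z)\longmapsto f\bigl(y,z,\,Du(x)+D_yU_k(x,y)+D_zW_k(x,y,z)\bigr)
\]
lies in $\mathcal{C}(\overline{\Omega})\otimes\mathfrak{X}^{B,\infty}_{per}\bigl(\mathbb{R}^N_y;\mathcal{C}_b(\mathbb{R}^N_z)\bigr)$. Two obstructions: (i) $Du$ is merely in $L^B(\Omega)$, not in $\mathcal{C}(\overline{\Omega})$; (ii) under $(A_2)$ alone, $f(y,\cdot,\lambda)$ is only \emph{measurable} in $z$, whereas membership in $\mathfrak{X}^{B,\infty}_{per}(\mathbb{R}^N_y;\mathcal{C}_b(\mathbb{R}^N_z))$ forces continuity in $z$. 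Point (i) is easily fixed by also approximating $u$ by smooth functions (density of $\mathcal{C}^\infty$ in $W^1L^B$ plus the same $b$-Lipschitz estimate you already use). Point (ii) is more delicate: Proposition~\ref{lemma2.2} as stated does not directly cover integrands that are only measurable and periodic in $z$; one needs either a refined reiterated Riemann--Lebesgue lemma in the Orlicz framework (as developed in \cite{FNE reit}) allowing test functions in $L^\infty_{per}(Z)$ rather than $\mathcal{C}_{per}(Z)$, or an additional regularization of $f$ in the $z$-variable controlled via $(H_4)$ and dominated convergence. The paper's sketch does not spell this out either, but you should be aware that invoking Proposition~\ref{lemma2.2} verbatim is not sufficient here.
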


\begin{remark}
	Putting together the results of Proposition \ref{upbtwos} and of Lemma \ref{lbstep1}, we indeed obtain a $\Gamma$-limit result  for $(F_{\varepsilon_j})_{\varepsilon_j}$, in terms of the functional $\ref{FGamma}$, which in turn leads to a result analogous to \cite[Theorem 1.1]{FNE reit}.  
	\end{remark}

\begin{proof}[Proof of Theorem 2]
It is a direct consequence of the above lemmas \ref{lemmaub} and \ref{lbfhombar} for $s=1.$ For 
$s=2,$ it relies on Theorem \ref{1} and minors changes with respect to the case $s=1$, hence the proof is omitted.
\end{proof}

\section{\protect\bigskip Acknowledgements}

This research has been undertaken within the scope of INdAM-ICTP 'Research in pairs programme 2018'. Part of this work was done when J.F.T. was visiting Dipartimento di Ingegneria Industriale at Universit\'a di Salerno, whose support is also aknowledged.
Giuliano  Gargiulo and Elvira Zappale are members of GNAMPA-INdAM.

\bigskip

\smallskip

\end{document}